\theoremstyle{plain}
\newtheorem{theorem}{Theorem}[section]
\newtheorem{proposition}[theorem]{Proposition}
\newtheorem{corollary}[theorem]{Corollary}
\newtheorem{lemma}[theorem]{Lemma}
\newtheorem{claim}{Claim}
\newtheorem{problem}[theorem]{Problem}
\theoremstyle{definition}
\newtheorem{definition}[theorem]{Definition}
\newtheorem{notation}[theorem]{Notation}
\theoremstyle{remark}
\newtheorem{remark}[theorem]{Remark}
\newtheorem{example}[theorem]{Example}
\newcommand{\coeff}{\mathop{\mathrm{coeff}}\nolimits}
\newcommand{\codim}{\mathop{\mathrm{codim}}\nolimits}
\newcommand{\Newt}{\mathop{\mathrm{Newt}}\nolimits}
\newcommand{\ord}{\mathop{\mathrm{ord}}\nolimits}
\newcommand{\dist}{\mathop{\mathrm{dist}}\nolimits}
\newcommand{\val}{\mathop{\mathrm{val}}}
\newcommand{\trop}{\mathop{\mathrm{trop}}}
\newcommand{\Trop}{\mathop{\mathrm{Trop}}}
\newcommand{\Supp}{\mathop{\mathrm{Supp}}}
\newcommand{\Aff}{\mathop{\mathrm{Aff}}}
\newcommand{\tropin}{\mathop{\mathrm{trop}^{-1}}}
\newcommand{\Hf}{\mathop{\mathrm{H}_4(\lambda; f, g; L)}}
\newcommand{\Elim}{\mathop{\mathrm{Elim}(\lambda; f, g; L)}}
\newcommand{\cPI}{\mathcal{PI}}
\newcommand{\cRayfg}{\mathcal{R}_1(f, g)}
\newcommand{\cSIfg}{\mathcal{L}_{\mathrm{s}}(f, g)}
\newcommand{\cLSfg}{\mathcal{LS}_2(f, g)}
\newcommand{\cRay}{\mathcal{R}_1}
\newcommand{\cSI}{\mathcal{L}_{\mathrm{s}}}
\newcommand{\cLS}{\mathcal{LS}_2}
\newcommand{\cF}{\mathcal{F}}
\newcommand{\cG}{\mathcal{G}}
\newcommand{\cRayFG}{\mathcal{R}_1(\mathcal{F}, \mathcal{G})}
\newcommand{\cSIFG}{\mathcal{L}_{\mathrm{s}}(\mathcal{F}, \mathcal{G})}
\newcommand{\cLSFG}{\mathcal{LS}_2(\mathcal{F}, \mathcal{G})}
\newcommand{\Mm}{\mathrm{M}_-}
\newcommand{\Mp}{\mathrm{M}_+}
\newcommand{\Sone}{\Sigma^{(1)}}
\newcommand{\Sn}{\Sigma^{(n)}}
\newcommand{\sembunioil}{\overline{\mathbf{i}_0\mathbf{i}_1}}
\newcommand{\sembunjojl}{\overline{\mathbf{j}_0\mathbf{j}_1}}
\newcommand{\sembunij}{\overline{\mathbf{i}\mathbf{j}}}
\newcommand{\sembunpipj}{\overline{p_ip_j}}
\newcommand{\sembunpqpr}{\overline{p_qp_r}}
\newcommand{\sembunpspt}{\overline{p_sp_t}}
\newcommand{\sembunab}{\overline{ab}}
\newcommand{\sembunqiqil}{\overline{q_{i}q_{i+1}}}
\newcommand{\bio}{\mathbf{i}_0}
\newcommand{\bil}{\mathbf{i}_1}
\newcommand{\bi}{\mathbf{i}}
\newcommand{\bj}{\mathbf{j}}
\newcommand{\bl}{\mathbf{l}}
\newcommand{\bim}{\mathbf{i}_-}
\newcommand{\bip}{\mathbf{i}_+}
\newcommand{\bjo}{\mathbf{j}_0}
\newcommand{\bjl}{\mathbf{j}_1}
\newcommand{\bjp}{\mathbf{j}_+}
\newcommand{\bjm}{\mathbf{j}_-}
\newcommand{\blo}{\mathbf{l}_0}
\newcommand{\bll}{\mathbf{l}_1}
\newcommand{\blp}{\mathbf{l}_+}
\newcommand{\blm}{\mathbf{l}_-}
\newcommand{\de}{\lambda}
\title[Tropical lifting problem for the intersection of plane curves]
{Tropical lifting problem for the intersection of plane curves}
\author{Masayuki Sukenaga}
\address{
Department of Mathematics, Graduate School of Science, Hiroshima University, 
1-3-1 Kagamiyama, Higashi-Hiroshima, 739-8526 JAPAN}
\email{d215394@hiroshima-u.ac.jp}
\subjclass[2010]{Primary 14T05; Secondary 14H50}
\keywords{Tropical geometry; Intersection theory; Lifting problem; Divisor theory}
\begin{document}

\begin{abstract}
Given a tropical divisor $D$ in the intersection of two tropical plane curves, we study when it can be realized as the tropicalization of the intersection of two algebraic curves, and give a sufficient condition.
It is shown that under a certain condition involving a graph determined by these tropical curves, we can algorithmically find algebraic curves such that the tropicalization of their intersection is $D$. 
\end{abstract}

\maketitle

\section{Introduction}

In this paper, let $k$ be a fixed algebraically closed field with a nontrivial valuation $\val: k\to \mathbb{R}\cup \{+\infty\}$.
A tropical plane curve is obtained by the tropicalization of an algebraic curve in $(k^*)^2$.
Here, the tropicalization is defined using the following map:
\begin{eqnarray*}
\trop: (k^*)^2&\to&{\mathbb{R}}^2\\
(x, y)&\mapsto&(-\val(x), -\val(y)).
\end{eqnarray*}
Let $f=\sum_{ij}c_{ij}x^iy^j \in k[x^{\pm1}, y^{\pm1}]$ be given.
For a given tropical divisor $D$ on the tropical plane curve $\Trop(V(f))$, it has been considered whether $D$ can be obtained by the tropicalization of the intersection of two algebraic curves (\cite{BL}, \cite{LS}, \cite{M}, \cite{OP} and \cite{OR}).
This kind of problem is called a tropical lifting problem or a tropical realization problem.
In this paper, we give a sufficient condition involving a graph determined by given tropical curves for the lifting problem for the intersection of curves.

\subsection{Tropical lifting problem}

First, we explain what is known about tropical lifting problems for the intersection of two tropical plane curves.
Let $\cF$ and $\cG$ be bivariate tropical polynomials.
They define the tropical plane curves $V(\cF)$ and $V(\cG)$ (see Section 2).

\begin{definition}
We say that two tropical plane curves $\Gamma_1$ and $\Gamma_2$ meet \textit{properly} at a point $p$ if $p$ is an isolated point in $\Gamma_1\cap \Gamma_2$.
We define $\cPI(\cF, \cG)$ as the multiset of the points $p$ at which $V(\cF)$ and $V(\cG)$ meet properly, with the local intersection numbers as multiplicities.
We also write $\cPI(\trop(f), \trop(g))$ as $\cPI(f, g)$ (for the tropicalization of a Laurent polynomial, see Definition \ref{TLP}).
\end{definition}

Proper intersections are the simplest intersections of tropical plane curves.
Tropical lifting problems of proper intersections are studied in \cite{OR} (see Theorem \ref{OR6.13}).
For algebraic curves $C_1, C_2\subset (k^*)^2$, if the tropical curves $\Trop(C_1)$ and $\Trop(C_2)$ meet properly, then $\trop(C_1\cap C_2)$ is equal to the intersection $\Trop(C_1)\cap \Trop(C_2)$, considered with multiplicities.
Thus, we have to consider the case where $\Trop(C_1)\cap \Trop(C_2)$ does not consist of isolated points, i.e., contains $1$-dimensional components.

\begin{definition}
A (tropical) \textit{divisor} on a tropical curve $\Gamma$ is a finite sum $D=\sum n_iP_i$, where $P_i\in \Gamma$ and $n_i\in \mathbb{Z}$.
\end{definition}

\begin{definition}
A \textit{tropical rational function} on a tropical curve $\Gamma$ is a continuous function $\psi: \Gamma\to \mathbb{R}$ such that its restriction to any edge of $\Gamma$ is a piecewise linear function with integer slopes, i.e., piecewise $\mathbb{Z}$-affine, and with only finitely many pieces.
The divisor of $\psi$ is $\sum_{P\in \Gamma}\ord_P(\psi)P$, where $\ord_P(\psi)$ is $(-1)$ times the sum of the outgoing slopes of $\psi$ at $P$.
We write $(\psi)$ for the divisor of $\psi$.
If $D$ and $E$ are divisors such that $D-E=(\psi)$ for some tropical rational function $\psi$, we say that $D$ and $E$ are linearly equivalent.
We define the support of $\psi$ as $\Supp(\psi)=\overline{\{P\in \Gamma\ |\ \psi(P)\neq 0\}}$.
\end{definition}

Morrison showed the following necessary condition for the realizability of a tropical divisor as the intersection of curves.

\begin{theorem}\label{M1.2}
\cite[Theorem 1.2]{M}
Let $\Gamma_1$ and $\Gamma_2$ be tropical plane curves such that $\Gamma_1$ is smooth (Definition \ref{STC}).
Let $E$ be the stable intersection divisor (Definition \ref{SID}) of $\Gamma_1$ and $\Gamma_2$, and let $D=\sum n_iP_i\ (n_i\in \mathbb{Z}_{\geq 0})$ be a divisor on $\Gamma_1\cap \Gamma_2$.
Assume that there exist algebraic curves $C_1, C_2\subset (k^*)^2$ without common irreducible components such that $\Trop(C_1)=\Gamma_1$, $\Trop(C_2)=\Gamma_2$, and  $\trop(C_1\cap C_2)=D$ as multisets.
Then, there exists a tropical rational function $\psi$ on $\Gamma_1$ such that $(\psi)=D-E$ and $\Supp(\psi)\subset \Gamma_1 \cap \Gamma_2$.
\end{theorem}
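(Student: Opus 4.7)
The strategy is to build $\psi$ from a natural tropical function on $\Gamma_1$ coming from the algebraic data. Let $g \in k[x^{\pm 1}, y^{\pm 1}]$ be a defining polynomial of $C_2$ and set $\cG := \trop(g)$, so that $V(\cG) = \Gamma_2$. Restricting the rational function $h := g|_{C_1}$ to $C_1$ gives an algebraic function whose torus divisor is $C_1 \cap C_2$, tropicalizing to $D$. On the tropical side, the plan is to take the provisional function $\psi_0 := \cG|_{\Gamma_1}$, a continuous piecewise $\bZ$-affine function on $\Gamma_1$ whose kinks are concentrated on $\Gamma_1 \cap \Gamma_2$ (since the kinks of $\cG$ lie precisely on $V(\cG) = \Gamma_2$). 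The balancing condition of $\Gamma_1$ at vertices off $\Gamma_2$ immediately ensures that $(\psi_0)$ is supported on $\Gamma_1 \cap \Gamma_2$.

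To identify $(\psi_0) = D - E$, apply the slope formula of Baker--Payne--Rabinoff to $h$ on the smooth projective completion $\bar C_1$ of $C_1$. Let $\bar \Gamma_1$ be the augmentation of $\Gamma_1$ obtained by adjoining points at infinity along the unbounded rays; on $\bar\Gamma_1$, the slope formula yields $(\trop(h)) = \trop(\mathrm{div}_{\bar C_1}(h))$. The divisor of $h$ on $\bar C_1$ decomposes as a torus part (tropicalizing to $D$) plus a boundary part supported on $\bar C_1 \setminus C_1$, which tropicalizes onto the added points. Restricting back to $\Gamma_1$ removes the infinity contributions but leaves correction terms at the bases of the rays, measuring the slopes of $\cG$ along those rays. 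A local analysis of $\cG$ on each unbounded ray of $\Gamma_1$, combined with the Minkowski-weight (or small-perturbation) description of stable intersection, should identify these correction terms with $-E$. Hence $(\psi_0) = D - E$ as divisors on $\Gamma_1$.

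Finally, adjust $\psi_0$ to obtain $\Supp(\psi) \subset \Gamma_1 \cap \Gamma_2$. On each connected component $U$ of $\Gamma_1 \setminus (\Gamma_1 \cap \Gamma_2)$, the restriction $\psi_0|_U$ is the restriction of a single tropical monomial of $\cG$, i.e., of a linear function on $\bR^2$, and therefore has zero divisor in the interior of $U$. Define $\varphi \colon \Gamma_1 \to \bR$ by $\varphi|_{\overline{U}} = \psi_0|_{\overline{U}}$ for every such $U$, and extend $\varphi$ continuously and harmonically across $\Gamma_1 \cap \Gamma_2$ with these prescribed boundary values; then $(\varphi) = 0$ globally, so $\psi := \psi_0 - \varphi$ vanishes outside $\Gamma_1 \cap \Gamma_2$ while satisfying $(\psi) = D - E$. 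The main obstacle lies in the second step: carefully matching the slope-formula corrections at the bases of the rays with $-E$ requires a joint analysis of $\cF$ and $\cG$ along each unbounded ray of $\Gamma_1$ and a comparison with the combinatorial formula for stable intersection. A secondary subtlety is verifying that the harmonic extension defining $\varphi$ exists and genuinely has trivial divisor even at boundary points of $\Gamma_1 \cap \Gamma_2$, which relies on the tree-like combinatorics of $\Gamma_1 \cap \Gamma_2$ coming from the planarity of the two tropical curves.
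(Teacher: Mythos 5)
First, a point of reference: the paper does not prove Theorem \ref{M1.2} at all; it is quoted verbatim from Morrison's article, so your argument can only be judged on its own terms. Judged that way, it has a genuine structural error at its core. The function $\psi_0:=\cG|_{\Gamma_1}$ does not have divisor $D-E$; its divisor is $E$. The divisor of the restriction of the tropical polynomial $\cG$ to the weighted curve $\Gamma_1$ is precisely the stable intersection cycle (this is the fan-displacement/Allermann--Rau description of stable intersection), and it depends only on $\trop(g)$, hence cannot see the particular lift $C_2$ and cannot produce $D$. The divisor $D$ is carried by a \emph{different} function: the restriction of $P\mapsto -\val(g(x))$ (for $x\in C_1$ over $P$, i.e.\ the function on the skeleton of $C_1^{\mathrm{an}}$ to which the Baker--Payne--Rabinoff slope formula actually applies). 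Its divisor on $\Gamma_1$ is $D$, full stop: the boundary part of $\mathrm{div}_{\bar{C}_1}(h)$ tropicalizes to the ends at infinity of $\bar{\Gamma}_1$, not to ``the bases of the rays,'' and $E$ is in general supported in the bounded interior of $\Gamma_1\cap\Gamma_2$, nowhere near the rays. Your second paragraph silently identifies these two functions, which is why you are forced to conjure correction terms equal to $-E$ out of the boundary; no such terms exist.

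The repair is to take $\psi$ to be the \emph{difference} of the two functions, $\psi:=\bigl(-\val(g(\cdot))\bigr)\big|_{\Gamma_1}-\cG|_{\Gamma_1}$, so that $(\psi)=D-E$ by the two computations above. The support condition then comes for free and needs none of your third paragraph: for $P\in\Gamma_1\setminus\Gamma_2$ the maximum defining $\cG(P)$ is attained by a unique monomial of $g$, so the ultrametric inequality gives $-\val(g(x))=\cG(P)$ for every $x\in C_1$ with $\trop(x)=P$, i.e.\ the two functions agree off $\Gamma_2$ and $\psi$ vanishes there. As written, your third step also fails on its own: a harmonic extension with prescribed boundary values has vanishing order only in the interior of the region being filled in, so $(\varphi)=0$ cannot be guaranteed at the boundary points of $\Gamma_1\cap\Gamma_2$; and on a connected $\Gamma_1$ a function with globally trivial divisor is far too rigid to match independently prescribed values on every component of $\Gamma_1\setminus\Gamma_2$.
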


In \cite{M}, a conjecture on the converse is also presented.

\begin{problem}\label{conjM}
\cite[Conjecture 3.3]{M}
Let $\psi$ be a tropical rational function on a tropical curve $\Trop(V(f))$ such that $\Supp(\psi)\subset \Trop(V(f)) \cap \Trop(V(g))$ and $(\psi)=D-E$, where $E$ is the stable intersection divisor and $D=\sum n_iP_i\ (n_i\in \mathbb{Z}_{\geq 0})$ is a divisor on $\Gamma_1\cap \Gamma_2$ such that each coordinate of $P_i$ is in the value group of $k$.
Then is it possible to find $f', g'\in k[x^{\pm1}, y^{\pm1}]$ such that $\Trop(V(f'))=\Trop(V(f))$, $\Trop(V(g'))=\Trop(V(g))$ and $\trop(V(f', g'))=D$?
\end{problem}

This was answered in the negative.
See \cite[Theorem 5.2]{LS} for a tropical self-intersection case, and \cite[Lemma 3.15]{BL} for a non self-intersection case.
On the other hand, it would be useful to find sufficient conditions for the realizability.
The purpose of this paper is to give a sufficient condition involving a certain graph.
We introduce several notations before explaining the setting of the main problem.

\begin{definition}
Let $\Gamma_1$ and $\Gamma_2$ be tropical plane curves.
Let $\mathfrak{K}$ be a connected component of $\Gamma_1 \cap \Gamma_2$.
The \textit{intersection multiplicity} of $\Gamma_1 \cap \Gamma_2$ on $\mathfrak{K}$ is defined as the sum of the multiplicities of the stable intersection points on $\mathfrak{K}$ (Definitions \ref{IM} and \ref{SID}).
\end{definition}

Let us introduce notations on the second simplest components of the intersection of tropical curves.

\begin{definition}\label{RLS}
We define $\cRayFG$ as the set of rays $L$ satisfying the following:
\begin{itemize}
\item $L$ is a connected component of the intersection $V(\cF) \cap V(\cG)$.
\item The intersection multiplicity of $V(\cF)$ and $V(\cG)$ on $L$ is $1$.
\item Each $1$-dimensional cell of $V(\cF)$ or $V(\cG)$ which has a $1$-dimensional intersection with $L$ and contains the endpoint of $L$ as its vertex has weight $1$.
\end{itemize}
Also, we define $\cLSFG$ as the set of (bounded) line segments $L$ satisfying the following:
\begin{itemize}
\item $L$ is a connected component of the intersection $V(\cF) \cap V(\cG)$.
\item The intersection multiplicity of $V(\cF)$ and $V(\cG)$ on $L$ is $2$.
\item Each $1$-dimensional cell of $V(\cF)$ or $V(\cG)$ which has a $1$-dimensional intersection with $L$ and contains an endpoint of $L$ as its vertex has weight $1$.
\end{itemize}

We write $\cSIFG:=\cRayFG \cup \cLSFG$.
It turns out that any edge of $V(\cF)$ or $V(\cG)$ that meets $L\in \cSIFG$ has weight $1$ and any vertex contained in $L$ is smooth (see Lemma \ref{L}).
For Laurent polynomials $f, g\in k[x^{\pm1}, y^{\pm1}]$, we also write $\cRayfg$, $\cLSfg$ and $\cSIfg$ for $\cRay(\trop(f), \trop(g))$, $\cLS(\trop(f), \trop(g))$ and $\cSI(\trop(f), \trop(g))$, respectively.
\end{definition}

Thus, the connected components of $V(\cF)\cap V(\cG)$ are points in $\cPI(\cF, \cG)$, elements of $\cSIFG$, and possibly a number of other $1$-dimensional sets.

We will see that, if $L\in \cRayfg$, then there are at most one point in the intersection $\trop(V(f, g))\cap L$ (Corollary \ref{raynihaikko}).
Thus, in this paper, we will consider the following condition.

\begin{definition}\label{star}
The condition $(*)$ on a divisor $D$ on $\Trop(V(f)) \cap \Trop(V(g))$ is the following:
\begin{itemize}
\item
$D=\sum n_iP_i$\ ($n_i\geq 0$).
\item
Each coordinate of $P_i$ is in the value group of $k$.
\item
There exists a tropical rational function $\psi$ on the tropical curve $\Trop(V(f))$ such that $\Supp(\psi)\subset \Trop(V(f)) \cap \Trop(V(g))$ and $(\psi)=D-E$, where $E$ is the stable intersection divisor of $\Trop(V(f))$ and $\Trop(V(g))$.
\item
For $L\in \cRayfg$, $\deg(D|_L)=1$.
\end{itemize}
\end{definition}

Note that this condition is natural in view of Theorem \ref{M1.2}.

\begin{notation}
For a tropical plane curve $\Gamma$, we write $\Sn(\Gamma)$ for the set of the $n$-dimensional cells of $\Gamma$ (see Theorem \ref{dualsubdiv}).
For a tropical polynomial $\cF \in \mathbb{T}[x^{\pm1}, y^{\pm1}]$, we write $\Delta^{(n)}_{\cF}$ for the set of the $n$-dimensional cells of $\Delta_{\cF}$, where $\Delta_{\cF}$ is the dual subdivision of the Newton polygon of $\cF$ (see Definition \ref{dualsubdiv}).
For a Laurent polynomial $f\in k[x^{\pm1}, y^{\pm1}]$, we write $\Delta_f$ and $\Delta^{(n)}_f$ for $\Delta_{\trop(f)}$ and $\Delta_{\trop(f)}^{(n)}$, respectively.
\end{notation}

Note that the intersection multiplicity at an endpoint of a ray or a line segment $L\in \cSIfg$ must be at least $1$, and hence the tropical curves  $\Trop(V(f))$ and $\Trop(V(g))$ have no vertices in the interior of $L$ (see Lemma \ref{L} for details).
Thus, we can define the following maps.

\begin{definition}
We define maps $\phi_i$ ($i=1, 2$) as follows:
\begin{eqnarray*}
\phi_1: \cSIFG&\to&\Sone(V(\cF))\\
L&\mapsto&\text{the $1$-dimensional cell of $V(\cF)$ containing $L$},\\
\phi_2: \cSIFG&\to&\Sone(V(\cG))\\
L&\mapsto&\text{the $1$-dimensional cell of $V(\cG)$ containing $L$},
\end{eqnarray*}
and we define maps $\Phi_i$ ($i=1, 2$) as follows:
\begin{eqnarray*}
\Phi_1: \cSIFG&\to&\Delta_{\cF}^{(1)}\\
L&\mapsto&\text{the $1$-simplex of $\Delta_{\cF}$ corresponding to $\phi_1(L)$},\\
\Phi_2: \cSIFG&\to&\Delta_{\cG}^{(1)}\\
L&\mapsto&\text{the $1$-simplex of $\Delta_{\cG}$ corresponding to $\phi_2(L)$}.
\end{eqnarray*}
\end{definition}

\begin{notation}
Let $a, b\in \mathbb{R}^2$ ($a\neq b$) be points such that the line segment $\sembunab$ has a rational slope.
Then, there is a primitive integer vector $v\in \mathbb{Z}^2$ which has the same slope as $\sembunab$.
The \textit{lattice length} of $\sembunab$ is the ordinary length of $\sembunab$ divided by the ordinary length of $v$.
When $a=b$, we define the lattice length of $\sembunab$ as $0$.
We write $\dist(a, b)$ for the lattice length of $\sembunab$.
We note that $\dist$ does not satisfy the metric inequality.
\end{notation}

On a line segment $L\in \cLS(\cF, \cG)$, a divisor $D$ satisfying $(*)$ can be described as follows.

\begin{lemma}\label{delta_D,L}
Let $L\in \cLSFG$ be a line segment.
Let $D$ be a divisor satisfying $(*)$.
Then, $D|_L=P_1+P_2$ for some $P_1, P_2\in L$, and we have $\dist(P_+, P_1)= \dist(P_-, P_2)$ and $\dist(P_+, P_2)= \dist(P_-, P_1)$, where $P_+$ and $P_-$ are the endpoints of $L$.
\end{lemma}

\begin{proof}
Straightforward from the fact that a tropical rational function $\psi$ on $V(\cF)$ as in $(*)$ takes $0$ at $P_+$ and $P_-$.
\end{proof}

\begin{notation}
Let a tropical divisor $D$ satisfy $(*)$.
For a line segment $L\in \cLSfg$, we define $\dist(D|_L, E|_L)=\min\{\dist(P_+, P_1), \dist(P_+, P_2)\}$ using the notation of Lemma \ref{delta_D,L}.
Also, when $L\in \cRay(f, g)$, we write $\dist(D|_L, E|_L)$ for the lattice length of the distance of the point in $D|_L$ and the endpoint of $L$.
\end{notation}

By analogy with plane algebraic curves, it is a natural setting to fix $f$ and change $g$ in realizing $D$, i.e., the zeros of $\psi$.
For example, in \cite{LS}, a tropical curve $\Gamma$ and an algebraic curve $C$ satisfying $\Trop(C)=\Gamma$ are fixed and $\trop(C\cap C')$ are studied for curves $C'$ with $\Trop(C')=\Gamma$.
Also, it would be useful to study whether it is possible to realize a certain part of $D$.
Let $\cSI'$ be a subset of $\cSI(f, g)$ and $\cPI:=\cPI(f, g)$ the proper intersections.
Let $D|_{\cSI'\cup \cPI}$ denote the restriction of $D$ to the union of the elements of $\cSI'\cup \cPI$.
Then, when is it possible to realize $D|_{\cSI'\cup \cPI}$, i.e., does there exist a Laurent polynomial $g'\in k[x^{\pm1}, y^{\pm1}]$ such that $\Trop(V(g'))=\Trop(V(g))$ and $\trop(V(f, g'))|_{\cSI' \cup \cPI}=D|_{\cSI'\cup \cPI}$?

\subsection{Main result}

As a partial answer to the above question, our main theorems give sufficient conditions for the realizability.
To state the main theorems, we introduce terminologies on trees.

\begin{notation}
It is well known that any two vertices of a tree $T$ are connected by a unique simple path in $T$ (see \cite[Theorem 1.5.1]{D}). 
We write $pTq$ for the simple path between two vertices $p$ and $q$ in $T$.
\end{notation}

\begin{definition}
Let $T$ be a tree and $\leq$ a total ordering on the set of its vertices.
Let $p_0$ denote the smallest vertex for $\leq$.
The order $\leq$ is called \textit{normal} if $p\in p_0Tq$ implies $p\leq q$.
\end{definition}

\begin{definition}\label{mu}
For lattice points $\bi, \bj \in \mathbb{Z}^2$ such that $\bj-\bi$ is primitive and a tropical polynomial $\mathcal{F}=\bigoplus_{\mathbf{i}\in \mathbb{Z}^2} \alpha_{\mathbf{i}} \mathbf{x}^{\bi}$ with $\alpha_{\bi}, \alpha_{\bj}\neq -\infty$, where $\mathbf{x}^{(i_1, i_2)}$ denotes $x^{i_1}y^{i_2}$, we define $\mu_n(\cF; \sembunij)$ ($n\in \mathbb{Z}$) and $\mu(\cF; \sembunij)$ by
\begin{eqnarray*}
\mu_n(\cF; \sembunij)&:=&-\alpha_{\bi+n(\bj-\bi)}+\alpha_{\bi}+n(\alpha_{\bj}-\alpha_{\bi}),\\
\mu(\cF; \sembunij)&:=&\min\{\mu_n(\cF; \sembunij)\ |\ n\in \mathbb{Z}\setminus \{0, 1\}\}.
\end{eqnarray*}
For $f=\sum_{\bi}c_{\bi}\mathbf{x}^{\bi}\in k[x^{\pm1}, y^{\pm1}]$ with $c_{\bi}, c_{\bj} \neq 0$, we write $\mu_n(f; \sembunij)$ and $\mu(f; \sembunij)$ for $\mu_n(\trop(f); \sembunij)$ and $\mu(\trop(f); \sembunij)$, respectively.
\end{definition}

\begin{figure}[H]
\centering
\begin{tikzpicture}
\draw[->,>=stealth] (-0.8,0)--(2,0) node[right] {$n$};
\draw[->,>=stealth] (0,-0.75)--(0,2) node[above] {$\alpha_{\bi+n(\bj-\bi)}$};
\draw[<->,>=stealth] (-0.5,-0.225)--(-0.5,0.75);
\draw[<->,>=stealth] (1,1.025)--(1,1.5);
\coordinate [label=below:\text{$\mu_{-1}(\cF; \sembunij)$}] (L6) at (-1.35,0.6);
\coordinate [label=below:\text{$\mu_2(\cF; \sembunij)=\mu(\cF; \sembunij)$}] (L7) at (2.6,1.7);
\coordinate [label=below:\text{$0$}] (L6) at (-0.15,0);
\coordinate [label=below:\text{$1$}] (L6) at (0.5,0);
\coordinate [label=below:\text{$2$}] (L6) at (1,0);
\coordinate [label=below:\text{$3$}] (L6) at (1.5,0);
\coordinate (p1) at (-0.5,-0.25);
\coordinate (p2) at (0,1);
\coordinate (p3) at (0.5,1.25);
\coordinate (p4) at (1,1);
\coordinate (p5) at (1.5,0.5);
\coordinate (p6) at (0.1,0);
\coordinate (p7) at (0.7,-0.6);
\coordinate (p8) at (0.9,-0.6);
\coordinate (p9) at (0.9,-0.8);
\coordinate (p10) at (0.7,-1);
\coordinate (p11) at (2,-0.8);
\coordinate (p12) at (0.3,-1.8);
\coordinate (p13) at (0.1,-2);
\coordinate (p14) at (-1.4,-2);
\coordinate (p15) at (-1.4,-0.8); 
\coordinate (p16) at (-1.6,-2.2); 
\draw (p1)--(p2)--(p3)--(p4)--(p5); 
\draw (-0.8,0.6)--(2,2);
\draw (p1)--(-0.55,-0.75);
\draw (p5)--(1.75,-0.75);
\draw (0.5,-0.05)--(0.5,0.05);
\draw (1,-0.05)--(1,0.05);
\draw (1.5,-0.05)--(1.5,0.05);
\foreach \t in {1,2,...,5} \fill[black] (p\t) circle (0.05);
\end{tikzpicture}
\caption{$\mu_n(\cF; \sembunij)$ and $\mu(\cF; \sembunij)$.}
\label{fmu}
\end{figure}

Note that $\mu_n$ depends on the orientation of $\sembunij$ but $\mu$ does not, and that $\mu_0(\cF; \sembunij)=\mu_1(\cF; \sembunij)=0$.

\begin{remark}
Let $\mathcal{F}=\bigoplus_{\mathbf{i}\in \mathbb{Z}^2} \alpha_{\mathbf{i}} \mathbf{x}^{\bi}$ be a tropical polynomial, $\sembunij$ a $1$-simplex of $\Delta_{\cF}$ with $\bj-\bi$ primitive, and $\overline{L}$ the corresponding edge of $V(\cF)$ (see Theorem \ref{dualitytheorem}).
Then, for any $P\in \overline{L}$ and $n\in \mathbb{Z}\setminus \{0, 1\}$, we have the following (see Remark \ref{1-dim.cell}):
\[
\alpha_{\mathbf{i}}+\mathbf{i}\cdot P=\alpha_{\mathbf{j}}+\mathbf{j}\cdot P>\alpha_{\mathbf{i}+n(\mathbf{j}-\mathbf{i})}+(\mathbf{i}+n(\mathbf{j}-\mathbf{i}))\cdot P,
\]
and hence,
\[
-\alpha_{\mathbf{i}+n(\mathbf{j}-\mathbf{i})}+\alpha_{\mathbf{i}}+n(\alpha_{\mathbf{j}}-\alpha_{\mathbf{i}})>0,
\]
i.e.,
\[
\mu_n(\cF; \sembunij)>0.
\]
Thus, in this case, we have $\mu(\cF; \sembunij)>0$.
In particular, if $L\in \cSIfg$, $P\in L$ and $\sembunij=\Phi_2(L)$, then $\mu(g; \sembunij)>0$.

The value $\mu(\cF; \sembunij)$ measures the margin for $\sembunij$ to be a $1$-simplex of $\Delta_{\cF}$, in a sense.
\end{remark}

Now, let us state the main theorems.
We consider the following graph theoretic condition which will be crucial in our sufficient conditions.

\begin{definition}
We say that $\cSI'$ is \emph{acyclic} with respect to $\Phi_2$ if the map $\Phi_2|_{\cSI'}$ is injective, i.e., there is no duplication in $\Delta':=\Phi_2(\cSI')$, and the union of the elements of $\Delta'$ is a forest.
\end{definition}

\begin{remark}
The acyclicity of $\cSI'$ is not directly correlated with acyclicity in $\Trop(V(g))$.
Even if $\cSI'$ is acyclic with respect to $\Phi_2$, the union of the corresponding edges of $\Trop(V(g))$ may have cycles (cf. Example \ref{ex3}).
\end{remark}

The following theorem implies that $D$ can be realized on $\cSI' \cup \cPI$ if $\cSI'$ is acyclic with respect to $\Phi_2$ and $D$ is sufficiently close to $E$.

\begin{theorem}(=Theorem \ref{thm_main1})
Let a divisor $D$ satisfy the condition $(*)$ in Definition \ref{star}.
Assume that $\cSI'$ is acyclic with respect to $\Phi_2$ and that for each $L\in \cSI'$, we have $\dist(D|_L, E|_L)<\mu(g; \Phi_2(L))$.
Then, there exists $g'\in k[x^{\pm1}, y^{\pm1}]$ such that $\trop(g')=\trop(g)$ and
\[
\trop(V(f, g'))|_{\cSI' \cup \cPI}=D|_{\cSI' \cup \cPI}.
\]
\end{theorem}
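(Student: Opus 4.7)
The plan is to proceed by induction on $|\cSI'|$, exploiting the forest structure of $\Delta':=\Phi_2(\cSI')$ to modify the coefficients of $g$ one at a time. In the base case $\cSI'=\emptyset$, one takes $g'=g$: the fundamental fact that the proper part of $\trop(V(f)\cap V(g))$ equals the stable intersection gives $\trop(V(f,g))|_{\cPI}=E|_{\cPI}$, and the condition $\Supp(\psi)\subset\Trop(V(f))\cap\Trop(V(g))$ together with the continuity of $\psi$ forces $\psi$ to vanish in a neighbourhood of each isolated point of $\cPI$, so $D|_{\cPI}=E|_{\cPI}$.

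For the inductive step, pick a leaf vertex $\bj_0$ of the forest $\Delta'$; let $\overline{\bi_0\bj_0}$ be the unique edge of $\Delta'$ incident to $\bj_0$ and set $L_0:=\Phi_2^{-1}(\overline{\bi_0\bj_0})\in\cSI'$. The set $\cSI'':=\cSI'\setminus\{L_0\}$ remains acyclic with respect to $\Phi_2$, so the inductive hypothesis yields $g_1$ with $\trop(g_1)=\trop(g)$ and $\trop(V(f,g_1))|_{\cSI''\cup\cPI}=D|_{\cSI''\cup\cPI}$. We then construct $g'$ from $g_1$ by a one-monomial perturbation attached to $\bj_0$: either by adjusting the coefficient $c_{\bj_0}$ of $g_1$ (adding a term $\eta\,\mathbf{x}^{\bj_0}$ whose valuation and residue are calibrated to produce the required shift) or by adding a monomial $\eta\,\mathbf{x}^{\bi_0+n(\bj_0-\bi_0)}$ at a controlled lattice point $n\notin\{0,1\}$, chosen so that $\trop(V(f,g'))|_{L_0}=D|_{L_0}$. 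In the line-segment case, Lemma \ref{delta_D,L} reduces the required two-point target on $L_0$ to the single scalar $\dist(D|_{L_0},E|_{L_0})$, so one scalar parameter suffices.

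The hypothesis $\dist(D|_{L_0},E|_{L_0})<\mu(g;\overline{\bi_0\bj_0})$ is used precisely to ensure $\trop(g')=\trop(g)$: the perturbation stays strictly within the margin by which $\overline{\bi_0\bj_0}$ is a $1$-simplex of $\Delta_g$, so $\Delta_{g'}=\Delta_g$ and the tropical polynomials agree. Preservation of the previously realized intersections then rests on two clean observations. For proper intersections in $\cPI$, $\trop(g')=\trop(g_1)$ gives $\Trop(V(g'))=\Trop(V(g_1))$, and proper intersections are determined by the tropical curves alone. For $L\in\cSI''$, the leaf property of $\bj_0$ combined with the injectivity of $\Phi_2|_{\cSI'}$ implies that $\bj_0$ is not a vertex of $\Phi_2(L)$; consequently the modification at $\bj_0$ does not enter the initial form of $g$ on the cell containing $L$, and the local intersection on $L$ is unchanged.

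The principal technical obstacle is the explicit quantitative link between the one-monomial perturbation of $g$ and the resulting lattice shift of the intersection points along $L_0$: using the initial forms of $f$ and $g$ on the cells dual to $\overline{\bi_0\bj_0}$ and $\Phi_1(L_0)$, one must compute how a perturbation of prescribed valuation translates intersection points along $L_0$, and then solve for the perturbation producing exactly the prescribed shift. The bound $<\mu(g;\overline{\bi_0\bj_0})$ should be viewed as exactly the amount of room this explicit solution requires.
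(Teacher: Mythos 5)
Your overall strategy---inductively adjusting one coefficient of $g$ per element of $\cSI'$, using the forest structure of $\Delta'=\Phi_2(\cSI')$ to order the adjustments---is the same as the paper's (the paper processes vertices in a normal order on each tree rather than peeling leaves, but these are equivalent). However, your preservation step contains a genuine gap, and you have assigned the hypothesis $\dist(D|_L,E|_L)<\mu(g;\sembunij)$ the wrong role.

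The gap is in the claim that, for $L\in\cSI''$, since the leaf $\bj_0$ is not a vertex of $\Phi_2(L)$, ``the modification at $\bj_0$ does not enter the initial form of $g$ on the cell containing $L$, and the local intersection on $L$ is unchanged.'' The position of $\trop(V(f,g))\cap L$ for a non-proper component $L$ is \emph{not} determined by the initial form of $g$ along $\Phi_2(L)$: by the paper's Lemma \ref{lem_f_L} and Proposition \ref{prop_d_00}, it is computed from a higher-order elimination involving \emph{all} coefficients $d_{\mathbf{i}}$ with $\mathbf{i}\in\Aff(\Phi_2(L))\cap\mathbb{Z}^2$, and the examples in Section 5 show explicitly that changing a single coefficient can move the intersection points on several components at once. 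In particular $\bj_0$ may lie on $\Aff(\Phi_2(L))$ without being a vertex of $\Phi_2(L)$, and then your perturbation at $\bj_0$ can shift $\trop(V(f,g_1))\cap L$. The correct non-interference statement is the paper's Corollary \ref{cor}: changing coefficients at lattice points $\bio+n(\bil-\bio)$ with $\mu_n(g;\sembunioil)>\dist(D|_L,E|_L)$ does not move the intersection on $L$. This is exactly where the hypothesis $\dist(D|_L,E|_L)<\mu(g;\sembunij)$ is needed---it forces the set of ``dangerous'' lattice points on $\Aff(\Phi_2(L))$ to be just the two endpoints $\{\bio,\bil\}$, which by injectivity of $\Phi_2|_{\cSI'}$ and the forest condition are touched at most once each. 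By contrast, your proposal uses this hypothesis only to conclude $\trop(g')=\trop(g)$; that conclusion is already automatic because the replacement coefficient is constructed with the same valuation as the original (Corollary \ref{cor_d_00}), so no margin in $\mu$ is needed for it. To repair the argument you must invoke the quantitative bound for every $L\in\cSI''$, not just for $L_0$, and replace the ``initial form'' justification by the $l_{\de}$-agreement criterion of Corollary \ref{cor}.
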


Imposing a further assumption on $\cSI'$, we may drop the restriction on the distance.

\begin{notation}\label{Aff}
For a given set $S\subset \mathbb{R}^n$, we write $\Aff(S)$ for the affine span of $S$.
\end{notation}

\begin{theorem}(=Theorem \ref{thm_main2})
Let a divisor $D$ satisfy the condition $(*)$ in Definition \ref{star}.
Assume that $\cSI'$ is acyclic with respect to $\Phi_2$ and that we can number and order the endpoints of the elements of $\Delta':=\Phi_2(\cSI')$ as $p_1< \dots < p_n$ so that this order is normal on each tree of the forest and that for each element $\sembunpipj$ of $\Delta'$, its affine span $\Aff(\sembunpipj)$ does not contain a point $p_l$ with $l>i, j$.
Then, there exists $g'\in k[x^{\pm1}, y^{\pm1}]$ such that $\trop(g')=\trop(g)$ and
\[
\trop(V(f, g'))|_{\cSI' \cup \cPI}=D|_{\cSI' \cup \cPI}.
\]
\end{theorem}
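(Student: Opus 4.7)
The plan is to construct $g'$ from $g$ by sequentially modifying one coefficient at a time, processing the edges of $\Delta' := \Phi_2(\cSI')$ in the order prescribed by the normal ordering on the forest. Since the ordering is normal on each tree, every non-root vertex $p_j$ is the larger endpoint of a unique edge $e_j = \overline{p_{i(j)} p_j} \in \Delta'$, where $p_{i(j)}$ is the predecessor of $p_j$ on the path from the tree's root; let $L_j \in \cSI'$ denote the unique element with $\Phi_2(L_j) = e_j$. Writing $g = \sum_{\bi} d_{\bi} \mathbf{x}^{\bi}$, I would build a sequence $g = g^{(1)}, g^{(2)}, \ldots, g^{(n)}$ in which $g^{(j)}$ differs from $g^{(j-1)}$ only in the coefficient of $\mathbf{x}^{p_j}$, and only by a unit factor (so that $\trop(g^{(j)}) = \trop(g)$); finally set $g' := g^{(n)}$.

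At step $j$ two things must be verified. The \emph{preservation claim} asserts that the modification at step $j$ does not disturb $\trop(V(f, g^{(j)}))|_{L_l}$ for any previously processed $l < j$. This follows because the intersection on $L_l$ depends only on the coefficients of $g^{(j)}$ at lattice points lying on $\Aff(e_l)$, while the hypothesis that $\Aff(e_l)$ contains no $p_m$ with $m > l$ rules out $p_j$ in particular. The \emph{solvability claim} asserts that the unit factor modifying $d_{p_j}$ can be chosen so that $\trop(V(f, g^{(j)}))|_{L_j} = D|_{L_j}$. By Lemma \ref{delta_D,L} and its analogue for rays, the divisor $D|_{L_j}$ is encoded by the single scalar $\dist(D|_{L_j}, E|_{L_j})$, so there is only one quantity to match.

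For solvability I would analyze the intersection on $L_j$ via the initial forms of $f$ and $g^{(j)}$ in the direction of $e_j$; these involve precisely the coefficients of $g^{(j)}$ at lattice points of $\Aff(e_j)$. The hypothesis that $\Aff(e_j)$ contains no other element of $\Delta'$ ensures that varying $d_{p_j}$ affects no other $L \in \cSI'$, and the hypothesis that $\Aff(e_j)$ contains no $p_l$ with $l > j$ ensures that no later step will retouch any coefficient on $\Aff(e_j)$. Hence $d_{p_j}$ is a genuine one-parameter degree of freedom, and an explicit calculation should translate the condition $\trop(V(f, g^{(j)}))|_{L_j} = D|_{L_j}$ into a single solvable equation for $d_{p_j}$. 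The proper-intersection piece is automatic: since $\trop(g') = \trop(g)$, the classical realization result for proper intersections (Theorem \ref{OR6.13}) yields $\trop(V(f, g'))|_{\cPI(f,g)} = \cPI(f, g)$, and condition $(*)$ forces $D|_{\cPI} = \cPI(f,g)$ since $\psi$ must vanish in a neighborhood of each isolated intersection point.

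The main obstacle is the solvability claim: one must write down explicitly how the coefficient $d_{p_j}$ governs the location of intersection points on $L_j$ (in terms of initial forms and, essentially, a resultant of the univariate restrictions of $f$ and $g^{(j)}$ along $\Aff(e_j)$), and then verify that this parameterization is surjective onto the admissible values of $\dist(D|_{L_j}, E|_{L_j})$ coming from Lemma \ref{delta_D,L}. The hypothesis that no later coefficient on $\Aff(e_j)$ remains in play reduces this to a one-variable perturbation computation, which should be tractable.
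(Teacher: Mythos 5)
Your proposal is correct and follows essentially the same route as the paper: process the edges of $\Delta'$ in the normal order, each non-root $p_t$ being the larger endpoint of the unique edge toward the root, and adjust the single coefficient $d_{p_t}$ by a unit. The two claims you isolate are exactly the paper's Corollary \ref{cor_d_00} (solvability, resting on Proposition \ref{prop_d_00} and the division construction of Lemma \ref{lem_f_L}, which is the ``one-variable perturbation computation'' you defer) and Corollary \ref{cor} (preservation, via the fact that $\trop(V(f,g))|_L$ depends only on the coefficients at lattice points of $\Aff(\Phi_2(L))$), with both hypotheses on $\Aff(\sembunpipj)$ used exactly as you indicate.
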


The proofs of the theorems proceed as follows.
For an element $L\in \cSIfg$, we will give an algorithm to determine $\trop(V(f, g))\cap L$ (see Lemma \ref{lem_f_L}, Definition \ref{H_Elim} and Proposition \ref{prop_d_00}).
This algorithm proceeds by constructing a suitable Laurent polynomial in the ideal $(f, g)$ and tells us how to modify $g$ in order to realize $D$ on $L$.
Using this, we will determine the coefficients of $g'$ one by one.
In the setting of Theorem \ref{thm_main2}, we use the given ordering.
We need the acyclicity condition to maintain the consistency.

\begin{remark}
Let $L\in \cSI'$ and $\Phi_2(L)=\sembunpipj$.
In determining $\trop(V(f, g))\cap L$ and the coefficient $d'_{p_{i}}$ of $g'$, the coefficients $d_{\mathbf{i}}$ for $\mathbf{i}\in \Aff(\sembunpipj)$ are essential.
This is why Theorem \ref{thm_main1} (resp. \ref{thm_main2}) requires the condition about the coefficients $d_{\mathbf{i}}$ for $\mathbf{i}\in \Aff(\sembunpipj)$ (resp. about $\Aff(\sembunpipj)$).
\end{remark}

\begin{remark}
The condition $p_l\notin \Aff(\sembunpipj)$ $(l>i, j)$ in Theorem \ref{thm_main2} depends on the ordering, not just on $\cSI'$.
For example, the order on the left in Figure \ref{fex} satisfies the condition, but the one on the right does not.
\end{remark}

\begin{figure}[H]
\centering
\begin{tikzpicture}
\coordinate [label=below:\text{$p_3$}] (L1) at (0,0);
\coordinate [label=below:\text{$p_4$}] (L2) at (0.5,0);
\coordinate [label=below:\text{$p_1$}] (L3) at (1,0);
\coordinate [label=above:\text{$p_2$}] (L4) at (0.5,0.5);
\coordinate [label=below:\text{$p_2$}] (L5) at (2,0);
\coordinate [label=below:\text{$p_1$}] (L6) at (2.5,0);
\coordinate [label=below:\text{$p_4$}] (L7) at (3,0);
\coordinate [label=above:\text{$p_3$}] (L8) at (2.5,0.5);

\draw (L1)--(L3)--(L4)--cycle;
\draw (L2)--(L4);
\draw (L5)--(L7)--(L8)--cycle;
\draw (L6)--(L8);
\draw [very thick] (L2)--(L1)--(L4)--(L3);
\draw [very thick] (L6)--(L5)--(L8)--(L7);

 \foreach \t in {1,2,...,8} \fill[black] (L\t) circle (0.05);

\end{tikzpicture}
\caption{Two orderings of the endpoints of the elements of $\Delta'=\Phi_2(\cSI')$.}
\label{fex}
\end{figure}

The rest of this paper is organized as follows.
Section 2 gives fundamental definitions and facts about tropical curves.
In Section 3, we show several lemmas concerning properties of $V(\cF)$ and $V(\cG)$ in a neighborhood of $L\in \cSIFG$ and introduce a kind of division procedure for Laurent polynomials over a valuation field.
In Section 4, we explain how to determine $\trop(V(f, g))\cap L$ for $L\in \cSIfg$, and prove the main theorems.
In the last section, we give several examples concerning the main theorems to illustrate the necessity of the acyclicity condition.

\section*{Acknowledgements}
I am grateful to Nobuyoshi Takahashi for helpful comments.
This work was supported by JST, the establishment of university fellowships towards the creation of science technology innovation, Grant Number JPMJFS2129.

\section{Tropical curves}

In this section, we recall the basics about tropical plane curves.
For details, see \cite{MS}.
First, we give the definition of the tropical algebra which is essential for studying tropical geometry.
\begin{definition}[Tropical algebra]
We define $\mathbb{T}=\mathbb{R}\cup \{-\infty\}$.
The \textit{tropical algebra} is the triple $(\mathbb{T}, \oplus, \odot)$, where the addition $\oplus$ is defined as the operation that takes the maximum of two numbers and the multiplication $\odot$ is defined as the ordinary addition.
We can easily check that $(\mathbb{T}, \oplus, \odot)$ is a semifield.
\end{definition}

To define tropical plane curves in terms of tropical algebra, we define tropical polynomials.
\begin{definition}[Tropical polynomials]
A \textit{tropical polynomial} $\mathcal{F}$ is an expression of the form
\[
\mathcal{F}=\bigoplus_{\mathbf{i}} \alpha_{\mathbf{i}} x_1^{i_1}\dots x_n^{i_n},
\]
where $\mathbf{i}=(i_1, \dots, i_n)\in \mathbb{Z}^n$ and $\alpha_{\mathbf{i}}\in \mathbb{T}$, and only finitely many of the coefficients $\alpha_{\mathbf{i}}$ are not $-\infty$.
We may drop terms with coefficients $-\infty$.
A tropical polynomial defines a map from $\mathbb{R}^n$ to $\mathbb{R}\cup \{-\infty\}$ in a natural way:
\[
\mathcal{F}(t_1, \dots, t_n)=\max_{\mathbf{i}}(\alpha_{\mathbf{i}}+i_1t_1+\dots+i_nt_n).
\]
We write $\mathbb{T}[x_1^{\pm1}, \dots, x_n^{\pm1}]$ for the set of all $n$-variate tropical polynomials, and define the addition and the multiplication in a natural way.
\end{definition}

\begin{definition}[Tropical hypersurfaces]
Let $\mathcal{F}=\bigoplus_{\mathbf{i}} \alpha_{\mathbf{i}} x_1^{i_1}\dots x_n^{i_n}\neq -\infty$ be a tropical polynomial.
The \textit{tropical hypersurface} $V(\mathcal{F})$ defined by $\mathcal{F}$ is the set
\begin{eqnarray*} 
V(\mathcal{F})=\left\{ (t_1,\dots, t_n)\in \mathbb{R}^n \middle| 
\begin{array}{l}
\text{$\exists \mathbf{i}=(i_1, \dots, i_n), \mathbf{j}=(j_1, \dots, j_n) \in \mathbb{Z}^n$ ($\mathbf{i} \neq \mathbf{j}$) s.t.}\\
\text{$\alpha_{\mathbf{i}}+i_1t_1+\dots+i_nt_n=\alpha_{\mathbf{j}}+j_1t_1+\dots+j_nt_n$}\\
\hspace{33.5mm} =\mathcal{F}(t_1,\dots, t_n)
\end{array}
\right\}.
\end{eqnarray*}
If $\mathcal{F}=-\infty$, i.e. all the coefficients of $\mathcal{F}$ are $-\infty$, we define $V(-\infty)=\mathbb{R}^n$.
When $n=2$ and $\mathcal{F}\neq -\infty$, we call $V(\mathcal{F})$ a \textit{tropical plane curve}.
Later, we will consider a tropical plane curve as a polyhedral complex endowed with weights on its $1$-dimensional cells (see Definition \ref{weight}).
\end{definition}

The following map is a bridge between algebraic geometry and tropical geometry.
\begin{definition}[Tropicalization map]
We define the \textit{tropicalization map} as follows:
\begin{eqnarray*}
\trop: &(k^*)^n&\to{\mathbb{R}}^n\\
&(x_1, \dots, x_n)&\mapsto (-\val(x_1), \dots, -\val(x_n)).
\end{eqnarray*}
\end{definition}

\begin{definition}[Tropicalization of Laurent polynomials]\label{TLP}
Let $f=\sum_{\mathbf{i}} c_{\mathbf{i}} \mathbf{x}^{\bi}\in k[x_1^{\pm1},\dots, x_n^{\pm1}]$ be a Laurent polynomial.
We define the \textit{tropicalization} of $f$ as
\[
\trop(f)=\bigoplus_{\mathbf{i}} \trop(c_{\mathbf{i}}) \mathbf{x}^{\bi}\ \left(=\bigoplus_{\mathbf{i}} \left((-\val(c_{\mathbf{i}})) \mathbf{x}^{\bi}\right)\right).
\]
\end{definition}

\begin{notation}
For $A\subset (k^*)^n$, we write $\Trop(A)$ for the closure of $\trop(A)$ in $\mathbb{R}^n$.
\end{notation}

Recall that $k$ is an algebraically closed field with a nontrivial valuation.

\begin{theorem}[Kapranov's Theorem, {\cite[Theorem 2.1.1]{EKL}}]\label{MS3.1.3}
Let $f\in k[x_1^{\pm1},\dots, x_n^{\pm1}]$ be a Laurent polynomial.
Then, we have
\[
V(\trop(f))=\Trop(V(f)).
\]
\end{theorem}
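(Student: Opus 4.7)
The plan is to prove the two inclusions $\Trop(V(f))\subset V(\trop(f))$ and $V(\trop(f))\subset \Trop(V(f))$ separately.

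For the first inclusion, take any $x\in V(f)\subset (k^*)^n$ and work directly with $\sum_{\bi} c_{\bi}x^{\bi}=0$. A finite sum in $k$ can vanish only if the minimum of the valuations of its summands is attained at least twice, since an isolated minimum term cannot be cancelled. Hence the minimum of $\val(c_{\bi})+\bi\cdot \val(x)$ is attained at two distinct indices, and, after negation, the maximum of $\trop(c_{\bi})+\bi\cdot \trop(x)$ (which equals $\trop(f)(\trop(x))$) is attained twice as well, so $\trop(x)\in V(\trop(f))$. Since $V(\trop(f))$ is closed in $\bR^n$ as a polyhedral complex, passing to the closure yields $\Trop(V(f))\subset V(\trop(f))$.

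For the reverse inclusion, I first reduce to the case $\bw\in V(\trop(f))\cap \val(k^*)^n$. Because $k$ is algebraically closed with nontrivial valuation, $\val(k^*)$ is a divisible dense subgroup of $\bR$, and $V(\trop(f))$ is a rational polyhedral complex, so the subset of $V(\trop(f))$ whose coordinates lie in $\val(k^*)$ is dense; combined with the fact that $\Trop(V(f))$ is closed by definition, this reduces the problem to the rational case. Given such $\bw$, choose $\ba\in (k^*)^n$ with $\trop(\ba)=\bw$, perform the monomial substitution $x_j\mapsto a_jy_j$, and multiply by a scalar of appropriate valuation to obtain a Laurent polynomial $g\in \cO[y_1^{\pm 1},\dots,y_n^{\pm 1}]$ all of whose coefficients lie in $\cO$ and at least one of which is a unit. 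The hypothesis $\bw\in V(\trop(f))$ then translates precisely into the statement that the reduction $\bar g\in \bar k[y_1^{\pm 1},\dots,y_n^{\pm 1}]$ is a nonzero Laurent polynomial which is not a monomial.

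It therefore suffices to produce a root $b\in (\cO^*)^n$ of $g$, for then $\ba\cdot b\in V(f)$ satisfies $\trop(\ba\cdot b)=\bw$. I first find a root of $\bar g$ in $(\bar k^*)^n$: since $\bar g$ has at least two distinct monomials, for a generic choice of units $\bar b_1,\dots,\bar b_{n-1}\in \bar k^*$ the univariate polynomial $\bar g(\bar b_1,\dots,\bar b_{n-1},y_n)\in \bar k[y_n^{\pm 1}]$ remains non-monomial, and hence by algebraic closedness of $\bar k$ has a nonzero root $\bar b_n$; a further genericity argument (using that $\bar k$ is infinite) shows that this root may be taken to be simple. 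Lifting $\bar b_1,\dots,\bar b_{n-1}$ arbitrarily to units $b_1,\dots,b_{n-1}\in \cO^*$ and applying the univariate Hensel's lemma to $g(b_1,\dots,b_{n-1},y_n)$ produces the desired $b_n\in \cO^*$ with $g(b_1,\dots,b_n)=0$. The principal technical obstacle is precisely this simultaneous genericity-plus-Hensel step, where one must check that the generic substitution really does yield a simple root in the residue field; this is the point at which the algebraic closedness of $\bar k$ (and hence of $k$, via the structure theory of its valuation ring) is used in an essential way.
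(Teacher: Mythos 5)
The paper does not prove this statement --- it is quoted as Kapranov's theorem with a citation to [EKL] (cf.\ also Theorem 3.1.3 in [MS]) --- so your proposal can only be measured against the standard proof. Your easy inclusion (the minimum of $\val(c_{\mathbf{i}})+\mathbf{i}\cdot\val(x)$ must be attained twice at a zero of $f$, plus closedness of $V(\trop(f))$) and your reduction to points $\mathbf{w}$ with coordinates in the value group (density of the divisible value group in the rational polyhedral complex $V(\trop(f))$, plus closedness of $\Trop(V(f))$) are both correct and are exactly how the standard argument begins, as is the translation step producing $g\in\mathcal{O}[y_1^{\pm1},\dots,y_n^{\pm1}]$ whose reduction $\bar g$ has at least two terms. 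The gap is in the last step. First, a minor point: it is not true that a generic specialization of $y_1,\dots,y_{n-1}$ keeps $\bar g$ non-monomial \emph{in $y_n$}; if all monomials of $\bar g$ have the same $y_n$-exponent (e.g.\ $\bar g=y_1+1$ with $n=2$) the specialization is a nonzero monomial with no roots. You must first choose a variable (or perform a monomial change of coordinates) so that two monomials of $\bar g$ differ in that exponent; then genericity does give two surviving terms of distinct degrees, since distinct Laurent monomials are linearly independent functions on the torus over the infinite field $\bar k$.

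The serious gap is the ``simple root plus Hensel'' step, which you yourself flag as the technical crux: it is false that genericity of $\bar b_1,\dots,\bar b_{n-1}$ yields a \emph{simple} nonzero root of $\bar g(\bar b_1,\dots,\bar b_{n-1},y_n)$. If $\bar g=(y_n-y_1)^2$ (possible even in characteristic $0$, e.g.\ $g=(y_2-y_1)^2+\pi y_1$ with $\val(\pi)>0$), or if $\bar g=y_1y_n^{p}+1$ in characteristic $p$, then \emph{every} specialization has only multiple roots, so there is no simple root to which Hensel's lemma applies --- yet the theorem is true in these cases, so the approach as written cannot close. The standard repair, and the route taken in [EKL]/[MS], is to abandon Hensel at this point and use the algebraic closedness of $k$ itself rather than of $\bar k$: the univariate Laurent polynomial $g(b_1,\dots,b_{n-1},y_n)$ has all coefficients in $\mathcal{O}$ and (for generic lifts, after the variable choice above) at least two unit coefficients, so its Newton polygon has a segment of slope $0$; since $k$ is algebraically closed the polynomial splits into linear factors whose root valuations are read off the Newton polygon, so it has a root $b_n$ with $\val(b_n)=0$, giving $b\in(\mathcal{O}^*)^n$ with $g(b)=0$ and hence $\trop(\mathbf{a}\cdot b)=\mathbf{w}$. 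With that replacement your argument becomes a correct rendition of the standard proof; without it, the final step fails.
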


\begin{definition}[Dual subdivisions, {\cite[Definition 3.10]{Mik}}]\label{dualsubdiv}
Let $\mathcal{F}=\bigoplus_{i,j} \alpha_{ij} x^i y^j$ be a tropical polynomial.
We write $\Newt(\mathcal{F})\subset \mathbb{R}^2$ for the convex hull of the set $\{(i, j)\in \mathbb{Z}^2\ |\ \alpha_{ij}\neq -\infty\}$.
Let $A_{\mathcal{F}}\subset \mathbb{R}^3$ be the convex hull of the set
\[
\{ (i, j, \alpha)\in \mathbb{Z}^2\times \mathbb{R}\ \mid \alpha \leq \alpha_{ij}\}.
\]
Then, the projections of the bounded faces of $A_{\mathcal{F}}$ form a lattice subdivision of $\Newt(\mathcal{F})$.
This naturally has a structure of a polyhedral complex.
The \textit{dual subdivision} of $\mathcal{F}$ is this polyhedral complex and we denote it by $\Delta_{\mathcal{F}}$.
For a Laurent polynomial $f\in k[x^{\pm1}, y^{\pm1}]$, we also write $\Delta_f$ for the dual subdivision of $\trop(f)$.
\end{definition}

\begin{theorem}[The Duality Theorem, {\cite[Proposition 3.11]{Mik}}]\label{dualitytheorem}
Let $\Gamma=V(\mathcal{F})$ be a tropical plane curve.
Then, $\Gamma$ is the support of a finite $1$-dimensional polyhedral complex $\Sigma_{\mathcal{F}}$ (possibly with noncompact cells) in $\mathbb{R}^2$.
It is dual to the subdivision $\Delta_{\mathcal{F}}$ in the following sense:
\begin{itemize}
  \item (Closures of) domains of $\mathbb{R}^2\setminus \Gamma$ correspond to lattice points in $\Delta_\mathcal{F}$.
  \item $1$-dimensional cells in $\Sigma_{\mathcal{F}}$ correspond to $1$-simplexes in $\Delta_{\mathcal{F}}$.
   \item $0$-dimensional cells in $\Sigma_{\mathcal{F}}$ correspond to $2$-dimensional cells in $\Delta_{\mathcal{F}}$.
  \item This correspondence is inclusion-reversing.
  \item A $1$-dimensional cell in $\Sigma_{\mathcal{F}}$ is orthogonal to the corresponding $1$-simplex in $\Delta_{\mathcal{F}}$ (see Figure \ref{fdual}).
\end{itemize}
For a cell $\sigma \in \Delta_{\mathcal{F}}$, the corresponding cell in $\Sigma_{\cF}$ is given by $\{P\in \mathbb{R}^2\ |\ \cF(P)=\alpha_{\bi}+\bi\cdot P \text{ for any vertex $\bi$ of $\sigma$}\}$.
In particular, $1$-dimensional cells in $\Sigma_{\mathcal{F}}$ have rational slopes. 
\end{theorem}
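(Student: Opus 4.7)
The plan is to prove the duality by identifying cells of the natural stratification of $\bR^2$ induced by $\cF$ with faces of $\Delta_\cF$, via a Legendre-type construction. For each $P=(t_1,t_2)\in \bR^2$, define the active set
\[
I(P)=\{(i,j)\in \mathbb{Z}^2\mid \alpha_{ij}+it_1+jt_2=\cF(P)\}.
\]
Declaring $P\sim Q$ iff $I(P)=I(Q)$ partitions $\bR^2$ into finitely many relatively open convex cells (each cell is cut out by finitely many equalities $\alpha_{ij}+it_1+jt_2=\alpha_{i'j'}+i't_1+j't_2$ together with strict inequalities for the remaining monomials). I would take $\Sigma_\cF$ to be the polyhedral complex consisting of the closures of these cells. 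By definition of $V(\cF)$, a point lies in $\Gamma$ precisely when $|I(P)|\geq 2$, so $\Gamma$ is the support of the union of cells of dimension $\leq 1$.

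Next I would establish the bijection with $\Delta_\cF$ using the polytope $A_\cF$. For a fixed $P=(t_1,t_2)$, consider the linear functional
\[
\ell_P(i,j,\alpha)=\alpha+it_1+jt_2
\]
on $\bR^3$. Since $A_\cF$ is unbounded only in the downward $\alpha$-direction, $\ell_P$ attains its maximum on a bounded face $F(P)$ of $A_\cF$, and a quick computation shows that the vertices of $F(P)$ are exactly $\{(i,j,\alpha_{ij}):(i,j)\in I(P)\}$. Projecting $F(P)$ to the $(i,j)$-plane thus gives the face $\sigma(P):=\Conv(I(P))$ of $\Delta_\cF$. The assignment $P\mapsto \sigma(P)$ depends only on the equivalence class of $P$, hence descends to a map from cells of $\Sigma_\cF$ to cells of $\Delta_\cF$. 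The inverse is obtained by sending a face $\sigma$ of $\Delta_\cF$ to the locus of $P$ where $\ell_P$ is maximized on the lift of $\sigma$ in $A_\cF$.

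With this bijection in hand, the listed properties follow formally. A dimension count shows $|I(P)|=1$ iff $P$ is in an open $2$-dimensional cell (corresponding to a lattice point of $\Delta_\cF$), $|I(P)|=2$ on relatively open $1$-cells (dual to $1$-simplexes), and $|I(P)|\geq 3$ at isolated vertices (dual to the $2$-dimensional cells of $\Delta_\cF$); this is precisely the claimed dimension pairing. Inclusion reversal is automatic: if a cell $C_1$ lies in the closure of $C_2$, then $I$-sets satisfy $I(C_1)\supset I(C_2)$, so the dual faces satisfy $\sigma(C_1)\supset \sigma(C_2)$. For orthogonality, an open $1$-cell of $\Sigma_\cF$ is cut out by an equation $(i-i')t_1+(j-j')t_2=\alpha_{i'j'}-\alpha_{ij}$ for the two active indices $(i,j),(i',j')$, so it is perpendicular to the vector $(i-i',j-j')$, which is exactly the direction of the dual $1$-simplex $\overline{(i,j)(i',j')}$. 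Rational slopes of $1$-cells follow because these defining vectors are integral.

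The only subtlety I anticipate is checking that the equivalence classes under $\sim$ are indeed relatively open convex polyhedra and that their closures fit together into a polyhedral complex; this is essentially a matter of verifying that the finite family of affine equalities and inequalities defining each class gives a well-defined polyhedron and that faces of such a polyhedron correspond to strictly larger active sets $I$, which is immediate from continuity of $\cF$. Everything else is bookkeeping, so the heart of the argument is the Legendre duality between faces of $A_\cF$ maximizing $\ell_P$ and active sets $I(P)$.
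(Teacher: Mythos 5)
The paper does not prove this statement: it is quoted as background from Mikhalkin (\cite[Proposition 3.11]{Mik}), so there is no in-paper argument to compare against. Your Legendre-duality argument via the lifted polytope $A_{\mathcal{F}}$ and the functionals $\ell_P$ is the standard proof of this result and is essentially sound: the identification of the face $F(P)$ maximizing $\ell_P$ with the active set $I(P)$, the inclusion-reversal via $I(C_1)\supset I(C_2)$ for $C_1\subset \overline{C_2}$, and the orthogonality computation are all correct.

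Two steps are stated imprecisely, one of them falsely. First, the vertices of $F(P)$ need not be \emph{exactly} $\{(i,j,\alpha_{ij}) : (i,j)\in I(P)\}$; a lifted point $(i,j,\alpha_{ij})$ with $(i,j)\in I(P)$ can lie in the relative interior of $F(P)$ and hence not be a vertex. What is true, and all you need, is that the points of the form $(i,j,\alpha_{ij})$ lying on $F(P)$ are exactly the lifts of $I(P)$, so that the projection of $F(P)$ equals $\Conv(I(P))$. Second, and more seriously, the dimension count via cardinality of $I(P)$ is wrong: for $\mathcal{F}=0\oplus 0\odot x\oplus 0\odot x^2$ the $1$-cell $\{t_1=0\}$ of $\Sigma_{\mathcal{F}}$ has $|I(P)|=3$ at every point, yet it is dual to the single $1$-cell $\overline{(0,0)(2,0)}$ of $\Delta_{\mathcal{F}}$. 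The correct invariant is $\dim\Conv(I(P))$, not $|I(P)|$: the stratum with prescribed active set $I$ is a relatively open subset of the affine subspace $\{Q : (i-i')q_1+(j-j')q_2=\alpha_{i'j'}-\alpha_{ij}\ \forall (i,j),(i',j')\in I\}$, whose dimension is $2-\dim\Conv(I)$, and this gives the dimension pairing $\dim C+\dim\sigma(C)=2$ in all three bullets. Both slips are repairable inside your own framework, but as written the claim ``$|I(P)|=2$ on relatively open $1$-cells and $|I(P)|\geq 3$ at isolated vertices'' is false and should be replaced by the affine-subspace argument.
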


\begin{figure}[H]
\centering
\begin{tikzpicture}
\coordinate (L1) at (2,0);
\coordinate (L2) at (2.5,0);
\coordinate (L3) at (3,0);
\coordinate (L4) at (2,0.5);
\coordinate (L5) at (2.5,0.5);
\coordinate (L6) at (2,1);

\coordinate (L7) at (-1,1);
\coordinate (L8) at (-0.5,1);
\coordinate (L9) at (0,0.5);
\coordinate (L10) at (0.5,0.5);
\coordinate (L11) at (1,1);
\coordinate (L12) at (0,0);
\coordinate (L13) at (0.5,0);
\coordinate (L14) at (-1,1.5);
\coordinate (L15) at (-0.5,1.5);
\coordinate (L16) at (0,2);

\draw (L1)--(L3)--(L6)--cycle;
\draw (L4)--(L5)--(L2);
\draw (L1)--(L5);

\draw (L11)--(L10)--(L9)--(L8)--(L15)--(L16);
\draw (L14)--(L15);
\draw (L7)--(L8);
\draw (L10)--(L13);
\draw (L9)--(L12);

\coordinate (A1) at (7,0);
\coordinate (A2) at (7.5,0);
\coordinate (A3) at (8,0);
\coordinate (A4) at (7,0.5);
\coordinate (A5) at (7.5,0.5);
\coordinate (A6) at (7,1);

\coordinate (A7) at (4,0.5);
\coordinate (A8) at (4.5,0.5);
\coordinate (A9) at (5,1);
\coordinate (A10) at (5.5,1);
\coordinate (A11) at (6,1.5);
\coordinate (A12) at (4.5,0);
\coordinate (A13) at (5.5,0);
\coordinate (A14) at (4,1.5);
\coordinate (A15) at (5,1.5);
\coordinate (A16) at (5.5,2);

\draw (A1)--(A3)--(A6)--cycle;
\draw (A4)--(A5)--(A2)--cycle;

\draw (A7)--(A8)--(A9)--(A10)--(A11);
\draw (A14)--(A15)--(A16);
\draw (A9)--(A15);
\draw (A8)--(A12);
\draw (A10)--(A13);

\coordinate [label=below:\text{$\Delta_{\mathcal{F}}$}] (a) at (2.5,-0.25);
\coordinate [label=below:\text{$\Delta_{\mathcal{G}}$}] (b) at (7.5,-0.25);
\coordinate [label=below:\text{$V(\mathcal{F})$}] (a) at (0,-0.25);
\coordinate [label=below:\text{$V(\mathcal{G})$}] (b) at (5,-0.25);

\coordinate [label=below:\text{$\leftrightarrow$}] (a) at (1.5,0.7);
\coordinate [label=below:\text{$\leftrightarrow$}] (b) at (6.5,0.7);

\end{tikzpicture}
\caption{(Smooth) tropical plane curves and their dual subdivisions.}
\label{fdual}
\end{figure}

\begin{notation}
Let $\Gamma$ be a tropical plane curve.
We call a $0$-dimensional cell of $\Gamma$ a vertex of $\Gamma$ and a $1$-dimensional cell of $\Gamma$ an edge of $\Gamma$.
\end{notation}

We define the weight of an edge of a tropical plane curve using the dual subdivision.

\begin{definition}\label{weight}
Let $\Gamma=V(\mathcal{F})$ be a tropical plane curve and $\sigma \in \Sone(\Gamma)$ an edge of $\Gamma$.
The \textit{weight} $w_{\sigma}$ of $\sigma$ in $\Gamma$ is the lattice length of the corresponding $1$-simplex of $\Delta_{\mathcal{F}}$.
\end{definition}

From now on, a ``tropical curve'' will refer to the polyhedral set $\Gamma$ together with weights on its edges.

\begin{remark}\label{1-dim.cell}
Let $\mathcal{F}=\bigoplus_{i,j} \alpha_{ij} x^i y^j$ be a tropical polynomial.
Let $\sigma$ be an edge of $\Sigma_{\mathcal{F}}$ with weight $1$ and $\sembunij$ the corresponding $1$-simplex of $\Delta_{\mathcal{F}}$.
Assume that $\alpha_{\mathbf{k}}+\mathbf{k}\cdot P=\mathcal{F}(P)$ for $P\in \sigma$ and $\mathbf{k}\in \mathbb{Z}^2\setminus \{\bi, \bj\}$.
Then $\mathbf{k}$ is one of the vertices of a $2$-dimensional cell in $\Delta_{\cF}$, corresponding to a vertex of $\sigma$, containing $\sembunij$ as its face.
In particular, for any $\mathbf{k}\in (\Aff(\sembunij)\cap \mathbb{Z}^2)\setminus \{\bi, \bj\}$, we have
\[
\alpha_{\mathbf{k}}+\mathbf{k}\cdot P<\mathcal{F}(P).
\]
\end{remark}

\begin{notation}\label{v_P,L}
Let $\Gamma$ be a tropical plane curve, $P$ a vertex of $\Gamma$ and $L$ an edge of $\Gamma$ containing $P$.
Let $R$ be the ray which contains $L$ such that $P$ is its endpoint.
We denote by $\mathbf{v}_{P, L}$ the primitive vector that have the same direction as $R$.
\end{notation}

Tropical plane curves satisfy the following balancing condition.

\begin{theorem}\label{MS3.3.2}
\cite[Theorem 3.3.2]{MS}
Let $\Gamma$ be a tropical plane curve and $P$ a vertex of $\Gamma$ and $L_1, \dots, L_n$ the edges of $\Gamma$ containing $P$ with weights $w_{L_i}$.
Then, we have
\[
\sum_{i}w_{L_i}\mathbf{v}_{P, L_i}=\mathbf{0}.
\]
\end{theorem}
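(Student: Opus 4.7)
The plan is to deduce the balancing condition from the Duality Theorem (Theorem \ref{dualitytheorem}). Since $P$ is a $0$-dimensional cell of $\Sigma_{\cF}$, the inclusion-reversing duality identifies it with a $2$-dimensional cell $F$ of $\Delta_{\cF}$, and the edges $L_1, \dots, L_n$ of $\Gamma$ at $P$ correspond cyclically to the $1$-simplexes $e_1, \dots, e_n$ bounding $F$.

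First I would list the vertices $\mathbf{u}_1, \dots, \mathbf{u}_n \in \mathbb{Z}^2$ of $F$ cyclically (indices taken mod $n$), so that $e_i$ runs from $\mathbf{u}_i$ to $\mathbf{u}_{i+1}$. Setting $\mathbf{w}_i := \mathbf{u}_{i+1} - \mathbf{u}_i$, I would write $\mathbf{w}_i = m_{L_i}\mathbf{p}_i$ with $\mathbf{p}_i \in \mathbb{Z}^2$ primitive, using that $m_{L_i}$ is by definition the lattice length of $e_i$ (Definition \ref{weight}). The orthogonality clause of the Duality Theorem then says that $\mathbf{v}_{P, L_i}$ is perpendicular to $\mathbf{p}_i$; since both are primitive integer vectors, necessarily $\mathbf{v}_{P, L_i} = \pm R(\mathbf{p}_i)$ for a fixed $90^{\circ}$ rotation $R$.

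The crux of the argument is that this sign is \emph{the same} for every $i$. At a point $Q = P + t\mathbf{v}_{P, L_i}$ in the interior of $L_i$ ($t>0$ small), duality forces the maximum defining $\cF(Q)$ to be realized exactly on the two-element set $\{\mathbf{u}_i, \mathbf{u}_{i+1}\}$, so $(\mathbf{u}_j - \mathbf{u}_i) \cdot \mathbf{v}_{P, L_i} < 0$ for every other vertex $\mathbf{u}_j$ of $F$. Hence $F$ lies entirely on the side of $\Aff(e_i)$ opposite to $\mathbf{v}_{P, L_i}$; that is, $\mathbf{v}_{P, L_i}$ is the \emph{outward} primitive normal of $\partial F$ at $e_i$. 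Choosing the cyclic ordering of the $\mathbf{u}_i$ so that this outward normal is uniformly $+R(\mathbf{p}_i)$ rather than $-R(\mathbf{p}_i)$, I get $m_{L_i}\mathbf{v}_{P, L_i} = R(\mathbf{w}_i)$ for every $i$, and hence
\[
\sum_{i=1}^n m_{L_i}\mathbf{v}_{P, L_i} = R\!\left(\sum_{i=1}^n \mathbf{w}_i\right) = R(\mathbf{0}) = \mathbf{0}.
\]

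The main obstacle is precisely this uniform-sign claim: a priori $\mathbf{v}_{P, L_i}$ could equal $+R(\mathbf{p}_i)$ for some $i$ and $-R(\mathbf{p}_i)$ for others, in which case the terms would not telescope. Ruling this out requires combining the duality correspondence between interior points of $L_i$ and the simplex $e_i$ with the maximization defining $\cF$ to determine which side of each $\Aff(e_i)$ the rest of $F$ lies on, and then checking that these outward-normal choices are compatible with a single orientation of $\partial F$. Once this is settled, the closed-polygon identity $\sum_i \mathbf{w}_i = \mathbf{0}$ immediately yields the balancing condition.
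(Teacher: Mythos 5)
Your argument is correct: identifying the vertex $P$ with its dual $2$-cell $F$, reading off $m_{L_i}\mathbf{v}_{P,L_i}$ as the $90^{\circ}$ rotation of the corresponding boundary edge vector of $F$ (with the outward-normal sign pinned down by the maximization defining $\cF$ on the interior of $L_i$), and invoking the closed-polygon identity is the standard proof of the balancing condition for tropical plane curves. The paper itself offers no proof — it cites \cite[Theorem 3.3.2]{MS} — so there is nothing to compare against; your write-up correctly isolates the one genuinely delicate point (the uniform sign) and resolves it properly.
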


\begin{definition}
Let $\Gamma=V(\mathcal{F})$ be a tropical plane curve.
A vertex $P\in \Gamma$ is called \textit{smooth} if the area of the corresponding cell in $\Delta_{\mathcal{F}}$ is $1/2$.
We see that this is equivalent to the condition that it is trivalent and all the weights of the three edges $L_1$, $L_2$ and $L_3$ containing $P$ are $1$, and for some (or any) pair $(i, j)$ ($i, j\in \{1, 2, 3\}$, $i\neq j$), we have $|\det(\mathbf{v}_{P, L_i}, \mathbf{v}_{P, L_j})|=1$.
\end{definition}

\begin{definition}[Smooth tropical plane curves]\label{STC}
A tropical plane curve $\Gamma=V(\mathcal{F})$ is called \textit{smooth} if all the lattice lengths of the $1$-simplexes of $\Delta_{\mathcal{F}}$ are $1$ and all the areas of the $2$-dimensional cells of $\Delta_{\mathcal{F}}$ are $1/2$ (see Figure \ref{fdual}).
In other words, $\Gamma$ is smooth if all the vertices are smooth and all the weights of the edges are $1$.
\end{definition}

\begin{notation}
Let $\Gamma$ be a tropical plane curve and $\sigma$ an edge of $\Gamma$.
We denote by $\mathbf{v}_{\sigma}$ primitive vector that have the same direction as $\Aff(\sigma)$.
This is well-defined up to sign.
\end{notation}

\begin{definition}[Transverse intersection points]
Let $\Gamma_1$ and $\Gamma_2$ be tropical plane curves.
A point $P$ is a \textit{transverse intersection point} of $\Gamma_1$ and $\Gamma_2$ if it is a proper intersection point of them and is a vertex of neither of them.
For a transverse intersection point $P$, there exist unique edges $L_i\in \Sone(\Gamma_i)$ ($i=1, 2$) containing $P$ in their interiors.
In this case, we say that $L_1$ and $L_2$ intersect \textit{transversely} at $P$, and we define the \textit{intersection multiplicity} at $P$ as
\[
i(P; \Gamma_1 \cdot \Gamma_2):=w_{L_1}w_{L_2}|\det(\mathbf{v}_{L_1}, \mathbf{v}_{L_2})|.
\]
Tropical plane curves $\Gamma_1$ and $\Gamma_2$ intersect \textit{transversely} if all the points in $\Gamma_1\cap \Gamma_2$ are transverse intersection points.
\end{definition}


Note that for a tropical plane curve $\Gamma$ and a vector $\mathbf{v}\in \mathbb{R}^2$, $\Gamma+\mathbf{v}$ is a tropical plane curve.
For tropical plane curves $\Gamma_1$ and $\Gamma_2$, it is known that for a generic vector $\mathbf{v}\in \mathbb{R}^2$ and a nonzero real number $\epsilon \in \mathbb{R}$ with sufficiently small absolute value, $\Gamma_1+\epsilon \mathbf{v}$ and $\Gamma_2$ intersect transversely, and the following sum is well-defined (see \cite[Section 6]{OR}).

\begin{definition}[Intersection multiplicities]\label{IM}
Let $\Gamma_1$ and $\Gamma_2$ be tropical plane curves.
We define the \textit{intersection multiplicity} at a point $P\in \Gamma_1 \cap \Gamma_2$ as
\[
i(P; \Gamma_1 \cdot \Gamma_2):=\sum_{L_1 \ni P, L_2\ni P}\left(\sum_{Q\in (L_1+\epsilon \mathbf{v})\cap L_2}i(Q; (\Gamma_1+\epsilon \mathbf{v}) \cdot \Gamma_2)\right),
\]
where $L_1$ and $L_2$ are edges of $\Gamma_1$ and $\Gamma_2$, $\mathbf{v}\in \mathbb{R}^2$ is a generic vector, $\epsilon \in \mathbb{R}$ is a sufficiently small nonzero real number.
\end{definition}

It is easy to see that $i(P; \Gamma_1 \cdot \Gamma_2)=i(P; \Gamma_2 \cdot \Gamma_1)$.

\begin{definition}[Stable intersection divisor]\label{SID}
Let $\Gamma_1$ and $\Gamma_2$ be tropical plane curves.
The \textit{stable intersection divisor} of $\Gamma_1$ and $\Gamma_2$ is defined as
\[
\sum_{P\in \Gamma_1\cap \Gamma_2}i(P; \Gamma_1 \cdot \Gamma_2)P.
\]
In an appropriate sense, this is equal to the limit of $(\Gamma_1+\epsilon \mathbf{v})\cap \Gamma_2$ as $\epsilon \rightarrow 0$, where $\mathbf{v}\in \mathbb{R}^2$ is a generic vector and $\epsilon$ is a sufficiently small nonzero real number.
\end{definition}





The following theorem says that the tropicalization conserves the intersection number in a certain sense.

\begin{theorem}\label{OR6.13}
\cite[Corollary 6.13]{OR}
Let $X_1, \dots, X_m\in (k^*)^n$ be pure dimensional closed subschemes of $(k^*)^n$ with $\sum_{i}\codim (X_i)=n$.
Let $\mathfrak{K}$ be a connected component of $\bigcap_{i}\Trop(X_i)$, and suppose that $\mathfrak{K}$ is bounded.
Then there are only finitely many $k$-valued points $x\in \left(\bigcap_{i}X_i\right)(k)$ with $\trop(x)\in \mathfrak{K}$, and
\[
\sum_{\substack{x\in \left(\bigcap_{i}X_i\right)(k)\\\trop(x)\in \mathfrak{K}}}i(x; X_1\cdots X_m)=\sum_{P\in \mathfrak{K}}i(P; \Trop(X_1)\cdots \Trop(X_m)).
\]
\end{theorem}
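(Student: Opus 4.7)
The plan is to prove both the finiteness assertion and the multiplicity identity by a moving argument: replace the schemes $X_i$ by generic translates $\mathbf{t}_i X_i$ with $\val(\mathbf{t}_i)=\mathbf{0}$ (which leaves their tropicalizations unchanged), invoke Theorem \ref{MS3.6.1} to reduce to the transverse case where the algebraic and tropical multiplicities coincide pointwise, and then transfer the resulting identity back to the original $X_i$ via conservation of intersection numbers in the resulting flat family.

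For the finiteness of $S_C:=\{x\in|\bigcap_i X_i|:\trop(x)\in C\}$, I would show that every irreducible component $Y$ of $\bigcap_i X_i$ meeting $\trop^{-1}(C)$ is $0$-dimensional. By Kapranov's theorem (Theorem \ref{MS3.1.3}), $\Trop(Y)$ is a connected polyhedral set of pure dimension $\dim Y$ contained in $\bigcap_i \Trop(X_i)$. Because $C$ is a connected component of this intersection and $\Trop(Y)$ is connected and meets $C$, one has $\Trop(Y)\subseteq C$. Positive-dimensional subvarieties of $(k^*)^n$ have unbounded tropicalizations, contradicting boundedness of $C$, so $\dim Y=0$; by Noetherianity of $\bigcap_i X_i$ there are only finitely many such components.

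For the multiplicity identity, pick $(\mathbf{t}_2,\dots,\mathbf{t}_m)$ in the Zariski dense subset furnished by the $m$-fold version of Theorem \ref{MS3.6.1}, so that
\[
\trop\bigl(X_1\cap \mathbf{t}_2 X_2\cap\dots\cap \mathbf{t}_m X_m\bigr)=\Trop(X_1)\cdot\Trop(X_2)\cdots\Trop(X_m)
\]
as cycles. Choose an open neighborhood $U$ of $C$ whose closure is disjoint from all other connected components of $\bigcap_i\Trop(X_i)$. On $U$, the right hand side is supported on the stable intersection points of $C$, each transverse, and at a transverse point the algebraic and tropical multiplicities coincide, so summing gives $\sum_{P\in C}i(P;\Trop(X_1)\cdots\Trop(X_m))$ and equals the corresponding algebraic intersection count on $\trop^{-1}(U)$ for the perturbed schemes.

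It remains to show that this perturbed algebraic count agrees with $\sum_{x\in S_C}i_k(x;X_1\cdots X_m)$. This follows from conservation of intersection numbers in the flat family parameterized by $(\mathbf{t}_2,\dots,\mathbf{t}_m)$ near $(\mathbf{1},\dots,\mathbf{1})$ — the intersection cycle restricted to the affinoid $\trop^{-1}(\overline{U})$ is proper and flat, so its degree is locally constant in the parameter. The main obstacle is precisely this \emph{no escape / no influx} step: one must verify that for $\mathbf{t}$ sufficiently close to $\mathbf{1}$, no closed point of $\bigcap_i \mathbf{t}_i X_i$ crosses the boundary of $\trop^{-1}(U)$. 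This is the content of properness of tropicalization restricted to a suitable affinoid (and is where the boundedness of $C$ enters crucially, guaranteeing that $\overline{U}$ can be taken compact in $\mathbb{R}^n$). Once established, the chain of equalities closes: algebraic count on $S_C$ equals algebraic count for generic $\mathbf{t}$ on $\trop^{-1}(U)$ equals tropical count on $C$.
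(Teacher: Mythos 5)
The paper does not prove this statement at all: it is quoted verbatim from Osserman--Rabinoff \cite[Corollary 6.13]{OR} and used as a black box, so there is no internal proof to compare against; what you have written is an attempt to reprove a substantial external theorem. Your finiteness argument is essentially sound (irreducibility gives connected tropicalization, a connected tropicalization meeting $C$ lies in $C$, and a bounded positive-dimensional balanced polyhedral complex cannot exist), modulo citing the structure/connectedness theorem rather than Kapranov, which only gives the set-theoretic equality for hypersurfaces.

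The multiplicity identity, however, has genuine gaps. First, the step you flag as ``the main obstacle'' --- conservation of intersection numbers for the family of intersections over the unit-valuation parameter torus, restricted to the affinoid $\trop^{-1}(\overline{U})$ --- is not a routine consequence of ``proper and flat, so degree is locally constant'': the family of intersections $X_1\cap \mathbf{t}X_2$ is not flat in general, and establishing local constancy of the total multiplicity in the non-archimedean analytic category is precisely the technical heart of \cite{OR} (their continuity theorem, built on properness of tropicalization over polyhedral domains and an analytic conservation-of-number result); asserting it leaves the core of the theorem unproved. Second, two concrete mismatches in your reduction: (a) translating by $\mathbf{t}$ with $\val(\mathbf{t})=\mathbf{0}$ does not move $\Trop(\mathbf{t}X_i)$ at all, so the tropical intersections on $U$ do \emph{not} become transverse, and your claim that ``each stable intersection point is transverse, and at a transverse point the algebraic and tropical multiplicities coincide'' fails; if you want a transversality argument you must translate by $\mathbf{t}$ with $\trop(\mathbf{t})=\epsilon\mathbf{v}\neq \mathbf{0}$, which then forces you to control the limit $\epsilon\to 0$ --- again the continuity theorem. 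If instead you rely on Theorem \ref{MS3.6.1} with multiplicities, you still need the unproved comparison between the count for generic $\mathbf{t}$ and for $\mathbf{t}=\mathbf{1}$. (b) Theorem \ref{MS3.6.1} supplies only a Zariski dense set of admissible $\mathbf{t}$ with $\val(\mathbf{t})=\mathbf{0}$; Zariski density does not provide points in a small analytic polydisc around $\mathbf{1}$, so ``locally constant near $(\mathbf{1},\dots,\mathbf{1})$'' does not connect the generic parameter to the original one unless you prove constancy over the whole (connected) unit-valuation locus --- which is once more the full strength of the Osserman--Rabinoff continuity result.
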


\section{Preparations for the main theorems}

We will provide additional explanations for some facts from Section 1 and make preparations for the next section.
Since we are going to compare the valuations of different terms in polynomials, we make the following definition.
\begin{definition}
We define a map $\tau$ as follows:
\begin{eqnarray*}
\tau: \mathbb{T}[x^{\pm1}, y^{\pm1}]\times \mathbb{Z}^2\times \mathbb{R}^2&\to&\mathbb{R}\cup \{-\infty\}\\
(\bigoplus_{\mathbf{i}} \alpha_{\mathbf{i}} \mathbf{x}^{\mathbf{i}}; \mathbf{j}; P)&\mapsto&\alpha_{\mathbf{j}}+\mathbf{j}\cdot P.
\end{eqnarray*}
For a Laurent polynomial $f\in k[x^{\pm1}, y^{\pm1}]$, we write $\tau(f; \mathbf{j}; P)$ for $\tau(\trop(f); \mathbf{j}; P)$.
\end{definition}


The map $\tau$ satisfies the following.
\begin{lemma}\label{tau(f+g)}
Let $f, g\in k[x^{\pm1}, y^{\pm1}]$ be Laurent polynomials.
Then, for all $\mathbf{i}\in \mathbb{Z}^2$ and $P\in \mathbb{R}^2$, we have
\[
\tau(f+g; \mathbf{i}; P)\leq \max \{ \tau(f; \mathbf{i}; P), \tau(g; \mathbf{i}; P)\}.
\]
Moreover, the equality holds if $\tau(f; \mathbf{i}; P)\neq \tau(g; \mathbf{i}; P)$.
\end{lemma}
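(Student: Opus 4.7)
The plan is to reduce the statement to the non-archimedean ultrametric inequality for the valuation $\val$ on $k$. Writing $f=\sum_{\mathbf{i}}c_{\mathbf{i}}\mathbf{x}^{\mathbf{i}}$ and $g=\sum_{\mathbf{i}}d_{\mathbf{i}}\mathbf{x}^{\mathbf{i}}$, the $\mathbf{i}$-th coefficient of $f+g$ is $c_{\mathbf{i}}+d_{\mathbf{i}}$, so unpacking the definition of $\tau$ turns the claimed inequality into
\[
-\val(c_{\mathbf{i}}+d_{\mathbf{i}})+\mathbf{i}\cdot P\leq \max\{-\val(c_{\mathbf{i}})+\mathbf{i}\cdot P,\ -\val(d_{\mathbf{i}})+\mathbf{i}\cdot P\}.
\]
Cancelling the common summand $\mathbf{i}\cdot P$ reduces this to the standard non-archimedean inequality $\val(c_{\mathbf{i}}+d_{\mathbf{i}})\geq \min\{\val(c_{\mathbf{i}}),\val(d_{\mathbf{i}})\}$, which holds for any valuation on $k$.

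For the equality clause, I would note that $\tau(f;\mathbf{i};P)\neq \tau(g;\mathbf{i};P)$ forces $\val(c_{\mathbf{i}})\neq \val(d_{\mathbf{i}})$, and in that case it is a classical property of valuations that $\val(c_{\mathbf{i}}+d_{\mathbf{i}})=\min\{\val(c_{\mathbf{i}}),\val(d_{\mathbf{i}})\}$; translating back through the definition of $\tau$ yields the desired equality. There is essentially no obstacle here; the only bookkeeping point is to treat vanishing coefficients, where the conventions $\val(0)=+\infty$ and $-\val(0)=-\infty$ make the inequality degenerate but still valid (and the hypothesis $\tau(f;\mathbf{i};P)\neq \tau(g;\mathbf{i};P)$ in the equality case automatically excludes the simultaneous vanishing of both).
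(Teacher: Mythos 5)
Your proof is correct and is exactly the argument the paper intends: the paper's own proof consists of the single sentence that the claim is clear from the ultrametric inequality for the valuation, and you have simply written out the reduction (cancelling $\mathbf{i}\cdot P$ and invoking $\val(c+d)\geq\min\{\val(c),\val(d)\}$ with equality when the valuations differ) in full detail, including the correct handling of vanishing coefficients.
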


\begin{proof}
This is clear from the ultrametric inequality for the valuation.
\end{proof}

For $A=(a_{ij})\in \mathrm{GL}_2(\mathbb{Z})$, $\mathbf{b}=(b_1, b_2)\in \mathbb{R}^2$ and $\mathbf{t}=(t_1, t_2)\in (k^*)^2$ such that $\trop(\mathbf{t})=\mathbf{b}$, we define the following automorphisms and an affine transformation.
\begin{eqnarray*}
\phi: (k^*)^2&\to&(k^*)^2\\
(a, b)&\mapsto&(a^{a_{11}}b^{a_{12}}t_1, a^{a_{21}}b^{a_{22}}t_2),\\
\phi^*: k[x^{\pm1}, y^{\pm1}]&\to& k[x^{\pm1}, y^{\pm1}]\\
x&\mapsto&x^{a_{11}}y^{a_{12}}t_1,\ \ y\mapsto x^{a_{21}}y^{a_{22}}t_2,\\
\trop(\phi)=\Phi: \mathbb{R}^2&\to&\mathbb{R}^2\\
\mathbf{v}&\mapsto&A\mathbf{v}+\mathbf{b},\\
\Phi^*: \mathbb{T}[x^{\pm1}, y^{\pm1}]&\to& \mathbb{T}[x^{\pm1}, y^{\pm1}]\\
x&\mapsto&b_1x^{a_{11}}y^{a_{12}},\ \ y\mapsto b_2x^{a_{21}}y^{a_{22}},\\
{{}^{t}\Phi}^{-}: \mathbb{R}^2&\to&\mathbb{R}^2\\
\mathbf{v}&\mapsto&{{}^{t}\!A}^{-1}(\mathbf{v}-\mathbf{b}).
\end{eqnarray*}
Then, the following can be verified by direct calculations.
\begin{itemize}
\item $\forall P\in (k^*)^2,\ \trop(\phi(P))=\Phi(\trop(P))$.
\vspace{1mm}
\item $\forall f\in k[x^{\pm1}, y^{\pm1}],\ \Trop(V(\phi^*(f)))=V(\Phi^*(\trop(f)))$.
\vspace{1mm}
\item $\forall f\in k[x^{\pm1}, y^{\pm1}],\ \forall P\in (k^*)^2,\ f(\phi(P))=(\phi^*(f))(P)$.
\vspace{1mm}
\item $\forall \cF \in \mathbb{T}[x^{\pm1}, y^{\pm1}],\ {{}^{t}\Phi}^{-}(V(\cF))=V(\Phi^{*}(\cF))$.
\end{itemize}
\medbreak
Thus, if $L$ is an edge of $\Trop(V(f))$, we can find an automorphism $\phi$ of $(k^*)^2$ such that $\trop(\phi)(L)$ is contained in the $y$-axis, for example.

Recall that $\cRayFG$ and $\cLSFG$ were the sets of rays and line segments contained in $V(\cF)\cap V(\cG)$, defined in Definition \ref{RLS}, and that $\cSIFG=\cRayFG \cup \cLSFG$.

\begin{lemma}\label{L}
Let $\cF$ and $\cG$ be bivariate tropical polynomials.
\begin{enumerate}
\item Let $L\in \cLSFG$, and $P_+$ and $P_-$ the endpoints of $L$.
Then, $P_*$ (``$*=+$ or $-$'') is a smooth vertex in one of $V(\cF)$ and $V(\cG)$, and is in the interior of an edge of weight $1$ in the other.
Furthermore, the interior of $L$ contains no vertices of $V(\cF)$ and $V(\cG)$.
In other words, for a neighborhood $U$ of $L$, the restrictions of $V(\cF)$ and $V(\cG)$ to $U$ are as in Figure \ref{fL}, where each vertex is smooth and each edge has weight $1$.

In particular, the stable intersection points of $V(\cF)$ and $V(\cG)$ on $L$ are the endpoints of $L$, each with weight $1$.

\item For $L\in \cRayFG$, the endpoint of $L$ is a smooth vertex in one of $V(\cF)$ and $V(\cG)$, and is in the interior of an edge of weight $1$ in the other.
Furthermore, the interior of $L$ contains no vertices of $V(\cF)$ and $V(\cG)$.
\end{enumerate}
\end{lemma}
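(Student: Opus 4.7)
The plan is to deduce the local structure from the constraint on the stable intersection multiplicity via Definitions \ref{IM} and \ref{SI}. Away from the finitely many vertices of $V(\cF)$ and $V(\cG)$ lying on $L$, the segment (or ray) $L$ sits inside a single edge $e_1 \in \Sone(V(\cF))$ and a single edge $e_2 \in \Sone(V(\cG))$, both parallel to $L$. After perturbing $V(\cF)$ by $\epsilon \mathbf{v}$ for a generic small $\mathbf{v}$, the shifted $e_1 + \epsilon \mathbf{v}$ and $e_2$ remain parallel and disjoint, so the stable intersection along $L$ localizes at the points of $L$ at which at least one of the two curves has a vertex.

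First I would rule out vertices of $V(\cF)$ or $V(\cG)$ in the interior of $L$. At such a vertex $Q$, the balancing condition (Theorem \ref{MS3.3.2}) forces the components of the outgoing primitive edge vectors perpendicular to $\mathbf{v}_{e_2}$ to sum to zero, so the transverse edges at $Q$ split into two nonempty sides. A generic perturbation makes the edges on one chosen side of $\mathbf{v}_{e_2}$ cross the opposite parallel edge, producing a strictly positive integer contribution to the stable intersection at $Q$. Since each endpoint of $L$ must also contribute at least $1$, any interior vertex would push the total strictly above $2$ (resp.\ $1$ for the ray case), contradicting $L \in \cLSFG$ (resp.\ $L \in \cRayFG$).

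Next I would analyze each endpoint $P$ by cases according to whether $P$ is a vertex of both curves, of only $V(\cF)$, or of only $V(\cG)$. In the ``both'' case, applying the same balancing-plus-perturbation argument separately to the pairs $(e_1, e_2^{(k)})$ with $e_2^{(k)} \neq e_2$ and to the pairs $(e_1^{(j)}, e_2)$ with $e_1^{(j)} \neq e_1$ yields contributions of at least $1$ each, so the endpoint already contributes at least $2$; combined with the other endpoint this exceeds the allowed total. Hence $P$ must be a vertex of exactly one of the curves, say $V(\cF)$, and lie in the interior of $e_2$. In this situation the contribution at $P$ equals $m_{e_2}$ times a sum $\sum_{+} m_{e_1^{(j)}} |\det(\mathbf{v}_{e_1^{(j)}}, \mathbf{v}_{e_2})|$ over the edges of $V(\cF)$ at $P$ on one chosen side of $\mathbf{v}_{e_2}$; balancing in $V(\cF)$ makes the sums over the two sides equal positive integers, and imposing that this contribution equals $1$ forces $m_{e_2} = 1$ together with exactly one edge on each side of multiplicity $1$ and determinant $1$. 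Balancing in $V(\cF)$ then gives $m_{e_1} = 1$, so $P$ is a smooth trivalent vertex of $V(\cF)$ lying in the interior of the multiplicity-$1$ edge $e_2$ of $V(\cG)$, as required. Part (2) follows from the same analysis applied to the unique endpoint of the ray $L \in \cRayFG$.

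The main obstacle will be the endpoint contribution analysis: enumerating the relevant pairs of edges, tracking which pairs actually intersect after a generic perturbation, and extracting from balancing together with the ``contribution $=1$'' condition both the smooth trivalent structure of $P$ in one curve and the multiplicity-$1$ conclusions for both $e_1$ and $e_2$.
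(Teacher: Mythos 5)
Your overall strategy is the same as the paper's: use the perturbation definition of the stable intersection together with the balancing condition to show that every vertex of $V(\cF)$ or $V(\cG)$ on $L$ contributes at least $1$, and then let the total multiplicity budget ($2$ for a segment, $1$ for a ray) force the local structure. You treat interior vertices before the endpoints where the paper does the reverse, but up to that reordering the case analysis is identical. There is, however, one step that does not follow: the assertion ``Balancing in $V(\cF)$ then gives $m_{e_1}=1$.'' The condition that the endpoint $P$ contributes exactly $1$ constrains only the edges of $V(\cF)$ at $P$ transverse to $e_2$ (exactly one on each side, multiplicity $1$, determinant $1$ against $\mathbf{v}_{e_2}$); it places no constraint on the multiplicity of the edge $e_1$ containing $L$, and balancing is consistent with any value of $m_{e_1}$. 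Concretely, take $\cG=0\oplus x$, so that $V(\cG)$ is the $y$-axis with multiplicity $1$, and
\[
\cF=0\oplus(-10)x\oplus(-10)x^2\oplus x^3\oplus(-1)xy\oplus(-1)xy^{-1}.
\]
Then $V(\cF)\cap V(\cG)$ is the single bounded segment $L=\{0\}\times[-1,1]$, which is an edge of $V(\cF)$ of multiplicity $3$ (dual to $\overline{(0,0)(3,0)}$). Its endpoints $(0,\pm 1)$ are trivalent vertices of $V(\cF)$ with dual triangles $\Conv\{(0,0),(3,0),(1,\pm 1)\}$ of area $3/2$, yet each contributes exactly $1$ to the stable intersection: perturbing the $y$-axis to $x=\epsilon$ yields exactly two transverse points of multiplicity $1$, converging to the two endpoints. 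Hence $L\in\cLS(\cF,\cG)$ while neither endpoint is a smooth vertex; only the multiplicity of the transverse edges is forced to be $1$, not that of $e_1$.

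To be fair, the paper's own proof makes exactly the same jump (``By the balancing condition, $P_+$ is a smooth vertex of $V(\cF)$''), so your argument is not weaker than the printed one; but the gap is genuine and it matters downstream, because the maps $\Phi_i$ and the condition $(\P)$ require $\Phi_1(L)$ to be a primitive $1$-simplex, i.e.\ $m_{\phi_1(L)}=1$. Either this primitivity must be added as a hypothesis (it holds automatically when one of the curves is smooth, as in the final corollary of Section 4), or a separate argument ruling out configurations like the one above is needed; neither your proposal nor the paper supplies one. The remaining parts of your analysis (no interior vertices; an endpoint cannot be a vertex of both curves; $m_{e_2}=1$ and the determinant conditions on the transverse edges) are correct and match the paper.
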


\begin{figure}[H]
\centering
\begin{tikzpicture}
\coordinate [label=left:$P_-$] (A1) at (0.4,0.4);
\coordinate [label=left:$P_+$] (A2) at (0.4,1.6);
\coordinate [label=right:$P_-$] (A3) at (1.6,0.4);
\coordinate [label=right:$P_+$] (A4) at (1.6,1.6);
\coordinate [label=left:$P_-$] (A5) at (3.4,0.4);
\coordinate [label=left:$P_+$] (A6) at (3.4,1.6);
\coordinate [label=right:$P_-$] (A7) at (4.6,0.4);
\coordinate [label=right:$P_+$] (A8) at (4.6,1.6);
\coordinate [label=left:$P_-$] (A9) at (6.4,0.4);
\coordinate [label=left:$P_+$] (A10) at (6.4,1.6);
\coordinate [label=right:$P_-$] (A11) at (7.6,0.4);
\coordinate [label=right:$P_+$] (A12) at (7.6,1.6);
\coordinate [label=left:$P_-$] (A13) at (9.4,0.4);
\coordinate [label=left:$P_+$] (A14) at (9.4,1.6);
\coordinate [label=right:$P_-$] (A15) at (10.6,0.4);
\coordinate [label=right:$P_+$] (A16) at (10.6,1.6);

\draw (0,0)--(A1)--(0.8,0);
\draw (0,2)--(A2)--(0.8,2);
\draw (A1)--(A2);
\draw (1.6,0)--(1.6,2);
\draw[dotted] (0,0)--(0.8,0)--(0.8,2)--(0,2)--cycle;
\draw[dotted] (1.2,0)--(2,0)--(2,2)--(1.2,2)--cycle;

\draw (4.2,0)--(A7)--(5,0);
\draw (4.2,2)--(A8)--(5,2);
\draw (A7)--(A8);
\draw (3.4,0)--(3.4,2);
\draw[dotted] (3,0)--(3.8,0)--(3.8,2)--(3,2)--cycle;
\draw[dotted] (4.2,0)--(5,0)--(5,2)--(4.2,2)--cycle;

\draw (7.2,0)--(A11)--(8,0);
\draw (6,2)--(A10)--(6.8,2);
\draw (A10)--(6.4,0);
\draw (A11)--(7.6,2);
\draw[dotted] (6,0)--(6.8,0)--(6.8,2)--(6,2)--cycle;
\draw[dotted] (7.2,0)--(8,0)--(8,2)--(7.2,2)--cycle;

\draw (9,0)--(A13)--(9.8,0);
\draw (10.2,2)--(A16)--(11,2);
\draw (A16)--(10.6,0);
\draw (A13)--(9.4,2);
\draw[dotted] (9,0)--(9.8,0)--(9.8,2)--(9,2)--cycle;
\draw[dotted] (10.2,0)--(11,0)--(11,2)--(10.2,2)--cycle;

\foreach \t in {1,...,16} \fill[black] (A\t) circle (0.045);

\coordinate [label=below:(a)] (a) at (1,-0.1);
\coordinate [label=below:(b)] (b) at (4,-0.1);
\coordinate [label=below:(c)] (c) at (7,-0.1);
\coordinate [label=below:(d)] (d) at (10,-0.1);
\end{tikzpicture}
\caption{$V(\cF)$ and $V(\cG)$ in a neighborhood of $L\in \cLSFG$.}
\label{fL}
\end{figure}

\begin{proof}
First, note that each endpoint of $L$ is a vertex of at least one of $V(\cF)$ and $V(\cG)$, and that if $P\in L$ is a vertex of $V(\cF)$ or $V(\cG)$, then we have $i(P; V(\cF)\cdot V(\cG))\geq 1$.
It follows that $V(\cF)$ and $V(\cG)$ intersect with multiplicity $1$ at each endpoint of $L$, and that $V(\cF)$ and $V(\cG)$ do not have a vertex in the interior of $L$.
Hence, an edge of $V(\cF)$ (resp. $V(\cG)$) intersecting the interior of $L$ contains $L$.

Let us prove (1).
The proof of (2) is similar.
We can assume that $L$ is contained in the $y$-axis and $P_+=(0, a_1)$ and $P_-=(0, a_2)$ with $a_1>a_2$ by applying an affine transformation with a unimodular integral coefficient matrix.
Let $U_+$ be a sufficiently small neighborhood of $P_+$ and $D_R:=\{(p_1, p_2)\in \mathbb{R}^2\ |\ p_1>0\}$.
Let $L_1, \dots, L_s$ be the edges of $V(\cF)$ which intersect $U_+$, and $L'_1, \dots, L'_t$ the edges of $V(\cG)$ which intersect $U_+$, with $L_1\supset L$ and $L'_1\supset L$.
Assume that $P_+$ is a vertex in both of $V(\cF)$ and $V(\cG)$, i.e. $s\geq 3$ and $t\geq 3$.
Then, by the balancing condition at $P_+$, there are $i$ and $j$ such that
\[
D_R\cap L_i\neq \emptyset \text{ and }D_R\cap L'_j\neq \emptyset.
\]
Note that by the assumption that $L\in \cLS(\cF, \cG)$ is a connected component of $V(\cF) \cap V(\cG)$, we have $L_i\cap L'_j=\{P_+\}$.
By symmetry, we assume that the slope of $L_i$ is larger than that of $L'_j$.
Let $\mathbf{v}=(v_1, v_2)\in \mathbb{R}^2$ be a general vector such that $v_1, v_2>0$ and $v_2/v_1$ is sufficiently large, and $\epsilon>0$ a sufficiently small positive number.
We will consider $V(\cF)\cap (\epsilon \mathbf{v}+V(\cG))$.
Then, we have $L_i\cap (\epsilon \mathbf{v}+L'_j)\neq \emptyset$ and $L_i\cap (\epsilon \mathbf{v}+L'_1)\neq \emptyset$.
Hence, we have $i(P_+; V(\cF) \cdot V(\cG))\geq 2$, contradicting to what we saw at the beginning.
Thus $P_+$ is a vertex of exactly one of $V(\cF)$ and $V(\cG)$.

Assume that $P_+$ is a vertex of $V(\cF)$.
Then, by the definition of $\cLSFG$ (see Definition \ref{RLS}), the multiplicity of $L_1$ is $1$.
Since the intersection multiplicity at $P_+$ is $1$, it is cleat that exactly one of $\{L_2, \dots, L_s\}$, say $L_2$, intersects $D_R$, that the weights of $L'_1$ and $L_2$ are both $1$ and that $|\det(\mathbf{v}_{L_2}, \mathbf{v}_{L'_1})|=1$.
Similarly, there is a unique edge of $V(\cF)$ intersecting $U_+\cap \{(p_1, p_2)\in \mathbb{R}^2\ |\ p_1<0\}$, and $V(\cF)$ is trivalent at $P_+$ (note that $V(\cF)$ contains no edge intersecting $U_+\cap \{(p_1, p_2)\in \mathbb{R}^2\ |\ p_1=0,\ p_2>a_1\}$ since $P_+$ is an endpoint of $L$).
By the balancing condition, $P_+$ is a smooth vertex of $V(\cF)$.
The same holds at the point $P_-$.
\end{proof}

\begin{corollary}\label{il-io}
Let $\cF, \cG \in \mathbb{T}[x^{\pm1}, y^{\pm1}]$, $L\in \cSIFG$, $\Phi_1(L)=\sembunioil$ and $\Phi_2(L)=\sembunjojl$.
Then $\bil-\bio=\pm(\bjl-\bjo)$.
\end{corollary}

\begin{proof}
It is clear that $\Aff(\Phi_1(L))=\Aff(\Phi_2(L))$, and hence, it is sufficient to show that $\Phi_1(L)$ and $\Phi_2(L)$ have the same lattice length.
By Lemma \ref{L}, each edge of $V(\cF)$ and $V(\cG)$ intersecting $L$ has weight $1$.
Then, by the definition of the weight of an edge of a tropical plane curve (see Definition \ref{weight}), the lattice lengths of $\Phi_1(L)$ and $\Phi_2(L)$ are $1$.
\end{proof}

\begin{lemma}\label{Lnohashi}
Let $\cF=\bigoplus_{\mathbf{i}}\alpha_{\mathbf{i}}\mathbf{x}^{\mathbf{i}}, \cG=\bigoplus_{\mathbf{i}}\beta_{\mathbf{i}}\mathbf{x}^{\mathbf{i}}\in \mathbb{T}[x^{\pm1}, y^{\pm1}]$ be tropical polynomials, $L\in \cSI(\cF, \cG)$, and $P_+$ an endpoint of $L$.
Assume that $\Phi_1(L)=\Phi_2(L)=\sembunioil$, $\alpha_{\bio}=\beta_{\bio}$ and $\alpha_{\bil}=\beta_{\bil}$.
Let $U_+$ be a sufficiently small neighborhood of $P_+$ and $\overline{L}=\Aff(L)$ (see Notation \ref{Aff}).
By Lemma \ref{L}, for either $(\cF_1, \cF_2)=(\cF, \cG)$ or $(\cF_1, \cF_2)=(\cG, \cF)$, the point $P_+$ is a smooth vertex of $V(\cF_1)$ and is in the interior of an edge of multiplicity $1$ in $V(\cF_2)$.
Let $\sigma_+$ be the $2$-simplex of $\Delta_{\cF_1}$ corresponding to $P_+$ and $\bip$ the vertex of $\sigma_+$ other than $\bio$ and $\bil$.
Then, for all $P\in U_+\cap (\overline{L}\setminus L)$, we have
\begin{eqnarray*}
\tau(\cF_1; \bip; P)>\tau(\cF; \mathbf{i}; P), \tau(\cG; \mathbf{i}; P), \tau(\cF_2; \bip; P)\ (\mathbf{i}\in \mathbb{Z}^2\setminus \{\bip\}),
\end{eqnarray*}
and
\begin{eqnarray*}
\tau(\cF_1; \bip; P_+)&=&\tau(\cF; \bi; P_+)=\tau(\cG; \bi; P_+)\ (\text{$\bi=\bio$ or $\bil$})\\
&>&\tau(\cF; \mathbf{j}; P_+), \tau(\cG; \mathbf{j}; P_+), \tau(\cF_2; \bip; P_+)\ (\mathbf{j}\in \mathbb{Z}^2\setminus \{\bio, \bil, \bip\}).
\end{eqnarray*}
\end{lemma}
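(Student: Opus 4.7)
The proof rests on two facts: (i) the duality theorem identifies, at any $P\in\mathbb{R}^2$ and for any tropical polynomial $\cF$, the set of $\bi$ at which $\tau(\cF;\bi;P)$ attains $\cF(P)$ with the vertex set of the cell of $\Delta_\cF$ at $P$; and (ii) the coefficient identities $\alpha_{\bio}=\beta_{\bio}$ and $\alpha_{\bil}=\beta_{\bil}$ force $\tau(\cF_1;\bi;P)=\tau(\cF_2;\bi;P)$ for $\bi\in\{\bio,\bil\}$ and every $P$.

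First I would treat $P_+$. By (i), since $P_+$ is a smooth vertex of $V(\cF_1)$ with dual $2$-simplex $\sigma_+$ of vertex set $\{\bio,\bil,\bip\}$, the set of $\bi$ with $\tau(\cF_1;\bi;P_+)=\cF_1(P_+)$ is exactly $\{\bio,\bil,\bip\}$; and since $P_+$ is interior to an edge of $V(\cF_2)$ dual to $\sembunioil$, the corresponding set for $\cF_2$ is exactly $\{\bio,\bil\}$. Combining with (ii) gives $\cF_1(P_+)=\tau(\cF_1;\bio;P_+)=\tau(\cF_2;\bio;P_+)=\cF_2(P_+)=:A$, which verifies the asserted equality chain. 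For the strict inequalities at $P_+$: if $\bj\notin\{\bio,\bil,\bip\}$ then $\bj$ lies outside both maximising sets, so $\tau(\cF_1;\bj;P_+)<A$ and $\tau(\cF_2;\bj;P_+)<A$; moreover $\tau(\cF_2;\bip;P_+)<A$ since $\bip\notin\{\bio,\bil\}$.

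Next I analyse $P\in U\cap(\overline{L}\setminus L)$, i.e., $P$ on the extension of $L$ through $P_+$. The edge of $V(\cF_2)$ containing $L$ is straight, dual to $\sembunioil$, and so it coincides with $\overline{L}$ locally; shrinking $U$, $P$ remains in the interior of this edge, and the maximiser set of $\tau(\cF_2;\cdot;P)$ stays $\{\bio,\bil\}$. For $\cF_1$, the three edges at the smooth trivalent vertex $P_+$ have three distinct primitive directions orthogonal to the three $1$-simplices of $\sigma_+$; the edge $L$ corresponds to $\sembunioil$, so moving along $\overline{L}$ past $P_+$ immediately exits $V(\cF_1)$ and enters one of the three open sectors at $P_+$. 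The key geometric step is to identify this sector: by the inclusion-reversing duality, the three sectors of $\mathbb{R}^2\setminus V(\cF_1)$ at $P_+$ are dual to the three vertices of $\sigma_+$, and the sector directly across $L$ along $\overline{L}$ is the one bounded by the two edges other than $L$, corresponding to the $1$-simplices $\overline{\bio\bip}$ and $\overline{\bil\bip}$; hence it is dual to their common vertex $\bip$. Therefore the maximiser set of $\tau(\cF_1;\cdot;P)$ is the singleton $\{\bip\}$.

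Combining these observations yields all the inequalities on $\overline{L}\setminus L$: the $\cF_1$-side gives $\tau(\cF_1;\bip;P)>\tau(\cF_1;\bi;P)$ for every $\bi\neq\bip$, while the chain $\tau(\cF_1;\bip;P)>\tau(\cF_1;\bio;P)=\tau(\cF_2;\bio;P)=\cF_2(P)\geq\tau(\cF_2;\bi;P)$ handles the $\cF_2$-side for every $\bi$ (in particular $\bi=\bip$). Since $\{\cF,\cG\}=\{\cF_1,\cF_2\}$, this is the asserted family. The main obstacle is the geometric identification of the sector entered past $P_+$ as the one dual to $\bip$; after that, the argument is pure bookkeeping of the maximiser sets against the coefficient-matching hypothesis.
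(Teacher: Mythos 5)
Your proof is correct and follows essentially the same route as the paper: identify the maximiser sets of $\tau(\cF_1;\cdot;\,\cdot\,)$ and $\tau(\cF_2;\cdot;\,\cdot\,)$ at $P_+$ and at $P\in\overline{L}\setminus L$ via duality, then transfer between $\cF_1$ and $\cF_2$ using the matched coefficients at $\bio$ and $\bil$. The only difference is that you spell out the balancing-condition argument identifying the sector entered past $P_+$ as the one dual to $\bip$, a step the paper delegates to its Figure \ref{f_1_m_+}.
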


\begin{proof}
By symmetry, we may assume that $\cF_1=\cF$.
Let $P\in  U_+\cap (\overline{L}\setminus L)$.
Then, the restrictions of the two tropical plane curves to a neighborhood of $P_+$ are as in Figure \ref{f_1_m_+}.
We have
\begin{eqnarray}
\tau(\cF; \bip; P)>\tau(\cF; \mathbf{i}; P)\ \ (\mathbf{i}\in \mathbb{Z}^2\setminus \{\bip\}),
\end{eqnarray}
since $\bip$ is the vertex corresponding to the domain containing $P$, and for all $\mathbf{j}\in \mathbb{Z}^2\setminus \{\bio, \bil, \bip\}$, we have
\begin{eqnarray}
\tau(\cF; \bip; P_+)=\tau(\cF; \bio; P_+)=\tau(\cF; \bil; P_+)>\tau(\cF; \mathbf{j}; P_+).
\end{eqnarray}
For all $\mathbf{i}\in \mathbb{Z}^2\setminus \{\bio, \bil\}$, we have
\begin{eqnarray}
&\tau(\cG; \bio; P)=\tau(\cG; \bil; P)>\tau(\cG; \mathbf{i}; P),&\\
&\tau(\cG; \bio; P_+)=\tau(\cG; \bil; P_+)>\tau(\cG; \mathbf{i}; P_+).&
\end{eqnarray}
By the assumption that $\alpha_{\bio}=\beta_{\bio}$ and $\alpha_{\bil}=\beta_{\bil}$, we have
\begin{eqnarray}
\ \tau(\cF; \bi; P)=\tau(\cG; \bi; P),\ \tau(\cF; \bi; P_+)=\tau(\cG; \bi; P_+) \ \ \text{($\bi=\bio$ or $\bil$)}.
\end{eqnarray}
By the inequalities (1), (3) and (5), we have
\begin{eqnarray*}
\tau(\cF; \bip; P)>\tau(\cF; \bio; P)= \tau(\cG; \bio; P)\geq \tau(\cG; \bi; P)\ \ (\bi \in \mathbb{Z}^2).
\end{eqnarray*}
The second inequalities follow from (2), (4) and (5).
\end{proof}

\begin{figure}[H]
\centering
\begin{tikzpicture}
\draw (0,0) circle [radius=1.2];
\draw (4,0) circle [radius=1.2];
\draw (-0.7,-0.45) circle [radius=0.275];
\draw (0.7,-0.45) circle [radius=0.275];
\draw (0,0.8) circle [radius=0.275];
\draw (3.3,0) circle [radius=0.275];
\draw (4.8,0) circle [radius=0.275];

\coordinate (A1) at (0,0);
\coordinate [label=right:$P_+$] (B1) at (-0.05,-0.1);
\coordinate (A2) at (0,0.3);
\coordinate [label=right:$P$] (D1) at (-0.05,0.325);
\coordinate (A3) at (0,-2);
\coordinate (A4) at (4,0);
\coordinate [label=right:$P_+$] (C1) at (3.95,-0.1);
\coordinate (A5) at (4,0.3);
\coordinate [label=right:$P$] (C2) at (3.95,0.33);
\coordinate (A6) at (4,-2);
\coordinate [label=right:$L$] (A7) at (0,-1.6);
\coordinate [label=right:$L$] (A8) at (4,-1.6);
\coordinate [label=above:$\overline{L}$] (B2) at (0,1.4);
\coordinate [label=above:$\overline{L}$] (B3) at (4,1.4);
\coordinate [label=below:$U_+$] (B4) at (-1,1.4);
\coordinate [label=below:$U_+$] (B5) at (3,1.4);

\coordinate [label=above:$\bip$] (a1) at (0.01,0.53);
\coordinate [label=above:$\bio$] (a2) at (-0.7,-0.69);
\coordinate [label=above:$\bil$] (a3) at (0.7,-0.69);
\coordinate [label=above:$\bio$] (a4) at (3.3,-0.24);
\coordinate [label=above:$\bil$] (a5) at (4.8,-0.24);

\draw (A1)--(A3);
\draw (A6)--(4,1.4);
\draw[very thick] (A1)--(A3);
\draw[very thick] (A4)--(A6);
\draw (-1.039,0.6)--(A1)--(1.039,0.6);
\draw[dotted, thick] (A1)--(0,0.525);
\draw[dotted, thick] (0,1.075)--(0,1.4);
\draw[dotted, thick] (0,-1.965)--(0,-2.2);
\draw[dotted, thick] (4,-1.965)--(4,-2.2);

\foreach \t in {1,2,4,5} \fill[black] (A\t) circle (0.06);

\coordinate [label=below:$V(\cF)$] (a) at (0,-2.3);
\coordinate [label=below:$V(\cG)$] (b) at (4,-2.3);
\end{tikzpicture}
\caption{$V(\cF)$ and $V(\cG)$ in a neighborhood $U_+$ of $P_+$ ($\cF=\cF_1$).}
\label{f_1_m_+}
\end{figure}

\begin{notation}
For a Laurent polynomial $f=\sum_{\bi}c_{\bi}\mathbf{x}^{\bi}\in k[x_1^{\pm1}, \dots, x_n^{\pm1}]$, we define $\coeff_{\bi}(f)=c_{\bi}$ and $v_{\bi}(f)=\val(c_{\bi})$.
\end{notation}

\begin{notation}\label{NOTA}
Let $f, g\in k[x^{\pm1}, y^{\pm1}]$ and $L\in \cSI(f, g)=\cRayfg\cup \cLSfg$.
In the rest of this paper, we use the following notation.
\begin{itemize}
\item
$\Phi_1(L)=\sembunioil$ and $\Phi_2(L)=\sembunjojl$, where $\bil-\bio=\bjl-\bjo$ (see Corollary \ref{il-io}).
\item
An endpoint $P_+\in L$ is a vertex of $\Trop(V(f_1))$ ($f_1\in \{f, g\}$).
Let $(\blo, \bll)$ be $(\bio, \bil)$ (resp. $(\bjo, \bjl)$) if $f_1=f$ (resp. $f_1=g$).
\item
The vertex of the $2$-simplex of $\Delta_{f_1}$ corresponding to $P_+$ are $\blo$, $\bll$ and $\blp$.
Let $\bip:=\bio+(\blp-\blo)$ and $\bjp:=\bjo+(\blp-\blo)$.
\item
If $L\in \cLSfg$, the other endpoint $P_-\in L$ is a vertex of $\Trop(V(f'_1))$ ($f'_1\in \{f, g\}$).
Let $(\blo', \bll')$ be $(\bio, \bil)$ (resp. $(\bjo, \bjl)$) if $f'_1=f$ (resp. $f'_1=g$).
\item
The vertex of the $2$-simplex of $\Delta_{f'_1}$ corresponding to $P_-$ are $\blo'$, $\bll'$ and $\blm'$.
Let $\bim:=\bio+(\blm'-\blo')$ and $\bjm:=\bjo+(\blm'-\blo')$.
\end{itemize}
\end{notation}

By multiplying a unit, we may assume that $f$, $g$ and $L$ further satisfy the following condition $(\P)$:
\begin{itemize}
\item
$\Phi_1(L)=\Phi_2(L)=\sembunioil$.
\vspace{1mm}
\item
$v_{\bio}(f)=v_{\bio}(g)$.
\vspace{1mm}
\item
$v_{\bil}(f)=v_{\bil}(g)$.
\end{itemize}

Furthermore, by applying an affine transformation, multiplying units and changing the variable $x$ to $\coeff_{10}(f)x$, we may assume that $f$, $g$ and $L$ satisfy the following condition $(\P')$:
\begin{itemize}
\item
$\Phi_1(L)=\Phi_2(L)=\sembunioil$.
\vspace{1mm}
\item
$\bio=(0, 0)$, $\bil=(1, 0)$ and $\bip=(0, 1)$.
\vspace{1mm}
\item
$v_{\bio}(f)=v_{\bio}(g)=v_{\bil}(f)=v_{\bil}(g)=0$.
\vspace{1mm}
\item
$P_+=(0, y_+)$ and $P_-=(0, y_-)$.
\end{itemize}

Now we are going to find an element of the ideal $(f, g)$ that is useful in studying $\trop(V(f)\cap V(g))$.
This will be of the form $G=g+h(\mathbf{x}^{\mathbf{v}})f$, where $h\in k[t^{\pm1}]$ is a univariate Laurent polynomial.
The proof of the following lemma gives an algorithm to find this element.

\begin{lemma}\label{lem_f_L}
Let $\de > 0$ be a positive number.
Let $f, g\in k[x^{\pm1}, y^{\pm1}]$ be Laurent polynomials satisfying the following: 
\begin{itemize}
\item $v_{\bio}(f)=v_{\bio}(g)\neq \infty$,\ \ $v_{\bil}(f)=v_{\bil}(g)\neq \infty$.
\item $\bil-\bio$ is primitive.
\item $\mu(f; \sembunioil)>0,\ \mu(g; \sembunioil)>0$ (see Definition \ref{mu}).
\end{itemize}
Then, there exists a Laurent polynomial $h\in k[t^{\pm1}]$ satisfying the following conditions:
\begin{itemize}
\item
For all $i\in \mathbb{Z}$, we have $v_i(h)>i(v_{\bil}(f)-v_{\bio}(f))$.
\item
For the Laurent polynomial $g':=g+h(\mathbf{x}^{\bil-\bio})f$, we have
\begin{eqnarray*}
&v_{\mathbf{i}_0}(g')=v_{\mathbf{i}_0}(g'),& \\
&v_{\mathbf{i}_1}(g')=v_{\mathbf{i}_1}(g'),& \\
&\mu(g'; \sembunioil)> \de.&
\end{eqnarray*}
\end{itemize}
\end{lemma}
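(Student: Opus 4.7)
The plan is to reduce the statement to a univariate question on the line $\Aff(\sembunioil)$. Set $\bv := \bil - \bio$; by primitivity, the lattice points on this line are exactly $\bio + m\bv$ for $m \in \mathbb{Z}$. Writing $c_m := c_{\bio + m\bv}$ and $d_m := d_{\bio + m\bv}$, form the univariate Laurent polynomials $\tilde f(t) := \sum_m c_m t^m$ and $\tilde g(t) := \sum_m d_m t^m$ in $k[t^{\pm 1}]$. Since multiplication by $\mathbf{x}^{\bv}$ translates line-indices by $1$, the operation $g \mapsto g + h(\mathbf{x}^{\bv})f$ acts on line-coefficients precisely as $\tilde g \mapsto \tilde g + h(t)\tilde f(t)$, and off-line coefficients of $g'$ do not appear in any quantity we need to control.

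Set $\eta := \val(c_{\bil}/c_{\bio})$. The coincidence hypothesis gives $\val(c_0) = \val(d_0)$ and $\val(c_1) = \val(d_1) = \val(c_0) + \eta$; combined with $\mu(f; \sembunioil), \mu(g; \sembunioil) > 0$, this means that for $m \neq 0, 1$ both $\val(c_m)$ and $\val(d_m)$ lie strictly on one side of the linear interpolation $\val(c_0) + m\eta$, with gap at least the corresponding $\mu$. The desired conclusion thus becomes: find $h = \sum a_n t^n$ with $\val(a_n) > n\eta$ for all $n$, such that $\val(d'_{\bio}) = \val(d_{\bio})$, $\val(d'_{\bil}) = \val(d_{\bil})$, and the analogous gap for $g' = g + h(\mathbf{x}^{\bv})f$ exceeds $\de$ at every line-index $m \neq 0, 1$.

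The construction of $h$ proceeds by an iterative cancellation algorithm. Process the positions $m = 2, 3, \ldots$ (and, symmetrically, $m = -1, -2, \ldots$) outward from $\{0, 1\}$; at each $m$ choose $a_m$ so that $a_m c_0$ cancels the dominant part of the current coefficient at position $m$, taking into account the already-determined $a_n$'s, in particular $a_{m-1} c_1$. The constraint $\val(a_n) > n\eta$ is preserved inductively by the ultrametric inequality applied to the defining formula for $a_n$. Moreover, each non-dominant contribution $a_n c_{m-n}$ with $m - n \neq 0, 1$ carries an extra $\mu(f; \sembunioil)$ of gap inherited from $\val(c_{m-n})$, so the residue at each cascade-processed position has gap improved by at least $\mu(f; \sembunioil)$ per pass. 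Iterating roughly $\lceil (\de - \mu(g;\sembunioil))/\mu(f;\sembunioil) \rceil$ passes and summing the increments gives $h$ of finite support whose associated $g'$ has gap exceeding $\de$ at every cascade-processed position.

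The main obstacle is the cascade boundary: cancellation at position $m$ introduces a leftover $a_m c_1$ at position $m+1$ whose gap equals (rather than improves on) that of the cancelled coefficient. To handle this one extends the cascade sufficiently far beyond the supports of $\tilde f$ and $\tilde g$, using the $\mu(f; \sembunioil)$ gain at each step to absorb the boundary leftover into higher-order corrections until it can be zeroed by truncation; alternatively one can bypass the cascade entirely by a Newton-polygon-based factorization of $\tilde f$, exploiting that $\sembunioil$ corresponds to the flat bottom of the Newton polygon of $\tilde f$. Either route yields the desired $h \in k[t^{\pm 1}]$ of finite support, and a final bookkeeping check of the ultrametric inequality confirms $\val(a_n) > n\eta$ for every $n$.
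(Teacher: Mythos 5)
Your overall strategy---reduce to a univariate problem on the line through $\bio$ and $\bil$, normalize so that $c_0=c_1=1$, and repeatedly add multiples of $f$ to cancel the offending coefficients, gaining at least $\mu(f;\sembunioil)$ per pass---is exactly the paper's strategy. However, there is a genuine gap in the cancellation step, and it is precisely the ``cascade boundary'' problem you flag yourself. You process positions \emph{outward} from $\{0,1\}$ and cancel at position $m$ using $a_m c_0$, which deposits a carry $a_m c_1$ at position $m+1$. Since $\val(c_1 )$ contributes no gain (in the normalization $c_1=1$), the recursion $a_{m+1}=-(a_m c_1+\cdots)/c_0$ keeps $\val(a_{m+1})$ as small as $\val(a_m)$ forever: the carries do not decay, the support of $h$ never closes off, and truncating at any finite position leaves a residue of valuation about $\mu(g;\sembunioil)$ at the truncation boundary, which need not exceed $\de$. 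So your proposed fix (``extend beyond the supports and absorb the leftover using the $\mu(f;\sembunioil)$ gain'') fails: that gain applies only to the terms $a_nc_{m-n}$ with $m-n\neq 0,1$, not to the boundary term $a_{m-1}c_1$, as your own accounting shows. The alternative ``Newton-polygon factorization'' is not developed enough to assess.

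The paper resolves this by running the elimination in the opposite direction: it targets the \emph{outermost} bad position $n=\Mp(g)$ (resp.\ $\Mm(g)$) and subtracts $d_n x^{n-1}f$, so that the term $c_1x$ of $f$ performs the cancellation at position $n$ while the leftover $c_0$ lands at position $n-1$, strictly closer to $\{0,1\}$; all other contributions already have valuation at least $v'(f)+v'(g)$. This makes the integer $\Mp(g)-\Mm(g)$ strictly decrease, so each pass terminates after finitely many steps with $v'(g_1)\geq v'(f)+v'(g)$, and iterating passes accumulates a gap exceeding any prescribed $\de$. To repair your argument you should reverse the direction of your cascade in this way (or otherwise produce a well-founded termination measure); as written, the construction does not yield a Laurent polynomial $h$ with the required properties.
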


\begin{proof}
We can assume that $\mathbf{i}_0=(0, 0)$ and $\mathbf{i}_1=(1, 0)$ by applying an affine transformation.
Then, the statements are only about the coefficients of $x^i$, and we can assume that $f=\sum_{i}c_ix^i, g=\sum_{i}d_ix^i\in k[x^{\pm1}]$.
We can also assume that $c_0=1$ by multiplying a unit.
By changing the variable $x$ to $c_1x$, we may also assume $c_1=1$.
Given a Laurent polynomial $F=\sum_i\alpha_ix^i\in k[x^{\pm1}]$, we define
\begin{eqnarray*}
v(F)&=&\min\{\val(\alpha_i)\},\\
v'(F)&=&\min\{\val(\alpha_i)\ |\ i\neq 0, 1\},\\
v'_+(F)&=&\min\{\val(\alpha_i)\ |\ i>1\},\\
v'_-(F)&=&\min\{\val(\alpha_i)\ |\ i< 0\}.
\end{eqnarray*}
Then, we have $v'(f)>0$ and $v'(g)>0$.
It is sufficient to show that there exists a Laurent polynomial $h\in k[x^{\pm1}]$ with $v(h)>0$ such that for the Laurent polynomial $g':=g+hf$, we have $v_0(g')=v_1(g')=0$ and $v'(g')\geq \de$.
Let $\de_0=v'(f)$ and $\de_1=v'(g)$.
Given a Laurent polynomial $F\in k[x^{\pm1}]$, we define
\begin{eqnarray*}
\Mm(F)&=&\min\{n\in \mathbb{Z}\ |\ n\leq0,\ v_n(F)<\de_0+\de_1\},\\
\Mp(F)&=&\max\{n\in \mathbb{Z}\ |\ n\geq1,\ v_n(F)<\de_0+\de_1\}.
\end{eqnarray*}
Note that $\Mm(F)\leq 0$, $\Mp(F)\geq 1$ and that for Laurent polynomials $F_1, F_2\in k[x^{\pm1}]$, we have
\begin{eqnarray*}
\Mm(F_1+F_2)&\geq& \min\{\Mm(F_1), \Mm(F_2)\},\\
\Mp(F_1+F_2)&\leq& \max\{\Mp(F_1), \Mp(F_2)\}.
\end{eqnarray*}

\begin{claim}\label{calcu}
The following hold.
\begin{itemize}
\item
If $\Mp(g)>1$, then there exist $a\in k$ and $i\in \mathbb{Z}$ such that $\val(a)>0$, $\Mp(g-ax^if)<\Mp(g)$ and $\Mm(g-ax^if)\geq \Mm(g)$.
\item
If $\Mm(g)<0$, then there exist $a\in k$ and $i\in \mathbb{Z}$ such that $\val(a)>0$, $\Mp(g-ax^if)\leq \Mp(g)$ and $\Mm(g-ax^if)> \Mm(g)$.
\end{itemize}
\end{claim}

\begin{proof}
We show the case where $n:=\Mp(g)>1$.
The proof in the case where $\Mm(g)<0$ is similar.
Let $a=d_n$.
Then, we have $\val(a)\geq \de_1 (>0)$.
Let $-ax^{n-1}f=\sum_{i}\alpha_ix^i$ and $g-ax^{n-1}f=\sum_{i}\beta_ix^i$.
Then, we have $\val(\beta_n)=\val(0)=\infty$ and
\begin{eqnarray*}
i< n-1 &\Rightarrow& \val(\alpha_i)\geq\de_0+\de_1,\\
i=n-1, n &\Rightarrow& \val(\alpha_i)=\val(a)\geq \de_1,\\
n<i &\Rightarrow& \val(\alpha_i)\geq\de_0+\de_1.
\end{eqnarray*}
Thus, we have
\begin{eqnarray*}
n\leq i \Rightarrow \val(\beta_i)\geq\de_0+\de_1,
\end{eqnarray*}
and $\Mm(-ax^{n-1}f)=0$.
Hence, we have
\begin{eqnarray*}
\Mp(g-ax^{n-1}f)<n=\Mp(g),
\end{eqnarray*}
and
\begin{eqnarray*}
\Mm(g-ax^{n-1}f)\geq \min\{\Mm(g), \Mm(-ax^{n-1}f)\}\geq \Mm(g).
\end{eqnarray*}
\end{proof}

Note that in the proof of the above claim, we have $\val(\beta_0)=\val(\beta_1)=0$.
From the first bullet in the above claim, we can show by induction on $n=\Mp(g)$ that there exists a Laurent polynomial $h_0\in k[x^{\pm1}]$ with $v(h_0)>0$ such that for the Laurent polynomial $g_1:=g+h_0f$, we have $v_0(g_1)=v_1(g_1)=0$ and $v'_+(g_1)\geq \de_0+\de_1$.
Then, from the second bullet in the above claim, we may ensure that there exists $h_1\in k[x^{\pm1}]$ with $v(h_1)>0$ such that for $g_2:=g_1+h_1f$, we have $v_0(g_2)=v_1(g_2)=0$ and $v'_-(g_2)> \de_0+\de_1$.
It follows that $g_2=g+(h_0+h_1)f$, $v(h_0+h_1)>0$ and $v'(g_2)> \de_0+\de_1$.

Then, by induction on $\max\left\{0, \left[(\de-v'(g))/v'(f)\right]+1\right\}$, we may ensure that there exists $h\in k[x^{\pm1}]$ with $v(h)>0$ such that for $g':=g+hf$, we have $v_0(g')=v_1(g')=0$ and $v'(g')> \de$.
\end{proof}

\begin{definition}\label{def_G_de}
Let $\de > 0$ be a positive number and $f, g\in k[x^{\pm1}, y^{\pm1}]$ Laurent polynomials satisfying the assumption of Lemma \ref{lem_f_L}.
We define $h(\de; g, f; \sembunioil)\in k[t^{\pm1}]$ to be the Laurent polynomial $h\in k[t^{\pm1}]$ obtained by the algorithm in the proof of Lemma \ref{lem_f_L}.
We also define $G(\de; g, f; \sembunioil)\in k[x^{\pm1}, y^{\pm1}]$ by
\begin{eqnarray*}
G(\de; g, f; \sembunioil):=g_{\de}-\frac{\coeff_{\bil}(g_{\de})}{\coeff_{\bil}(f_{\de})}f_{\de},
\end{eqnarray*}
where
\begin{eqnarray*}
g_{\de}:=g+h(\de; g, f; \sembunioil)(\mathbf{x}^{\bil-\bio})f,\\
f_{\de}:=f+h(\de; f, f; \sembunioil)(\mathbf{x}^{\bil-\bio})f.
\end{eqnarray*}
\end{definition}

More generally, we define the following set.

\begin{definition}\label{H_Elim}
Let $f, g\in k[x^{\pm1}, y^{\pm1}]$ be Laurent polynomials, $L\in \cSIfg$ a ray or a line segment and $\de > 0$ a positive number.
Then, we define $\Hf \subset k[x^{\pm1}, y^{\pm1}]^4$ and $\Elim \subset k[x^{\pm1}, y^{\pm1}]$ by
\begin{eqnarray*}
\Hf=\left\{ (h_{1}, h_{2}, h_{3}, h_{4})\ \middle| 
\begin{array}{l}
\mu\left(f+h_{1}f+h_{2}g; \Phi_1(L)\right)>\de,\\
\mu\left(g+h_{3}f+h_{4}g; \Phi_2(L)\right)>\de,\\
\text{and, for any}\ P\in L,\\
\trop(h_{1})(P)<0,\\
\trop\left(h_2\mathbf{x}^{\bjo-\bio}\right)(P)<v_{\bio}(f)-v_{\bjo}(g),\\
\trop\left(h_3\mathbf{x}^{\bio-\bjo}\right)(P)<v_{\bjo}(g)-v_{\bio}(f),\\
\trop(h_{4})(P)<0
\end{array}
\right\},
\end{eqnarray*}
where $\Phi_1(L)=\sembunioil$ and $\Phi_2(L)=\sembunjojl$ are endowed with the same orientation, and
\begin{eqnarray*}
\Elim=\left\{ G\ \middle| 
\begin{array}{l}
\exists (h_{1}, h_{2}, h_{3}, h_{4})\in \Hf\ \text{s.t.}\\
G=g'-\frac{\coeff_{\bjl}(g')}{\coeff_{\bil}(f')}\mathbf{x}^{\bjl-\bil}f',\\
\text{where}\ f'=f+h_{1}f+h_{2}g\ \text{and}\ g'=g+h_{3}f+h_{4}g
\end{array}
\right\}.
\end{eqnarray*}
\end{definition}

\begin{remark}
Let $f, g\in k[x^{\pm1}, y^{\pm1}]$ be Laurent polynomials and $L\in \cSIfg$ a ray or a line segment satisfying the condition $(\P)$.
Let $\de > 0$ be a positive number.
Then, we have $(h(\de; f, f; \sembunioil)(x), 0, h(\de; g, f; \sembunioil)(x), 0)\in \Hf$ and $G(\de; g, f; \sembunioil)\in \Elim$.
\end{remark}

To compare the tropicalizations of $V(f)$, $V(g)$ and $V(G)$ for $G\in \Elim$, we use the following lemma.

\begin{lemma}\label{lem_f_g}
Let $f, g\in k[x^{\pm1}, y^{\pm1}]$ be Laurent polynomials and $L\in \cSIfg$ a ray or a line segment.
Let $\de > 0$ be a positive number and $(h_{1}, h_{2}, h_{3}, h_{4})\in \Hf$.
Then, the following hold.
\begin{enumerate}
\item
For any $\mathbf{i}\in \mathbb{Z}^2$ and $P\in L$, we have
\begin{eqnarray*}
\tau\left(h_{1}f+h_{2}g; \mathbf{i}; P\right)&<& \tau(f; \bio; P),\\
\tau\left(h_{3}f+h_{4}g; \mathbf{i}; P\right)&<& \tau(g; \bjo; P).
\end{eqnarray*}
\item
We have
\begin{eqnarray*}
&v_{\bio}\left(f+h_{1}f+h_{2}g\right)=v_{\bio}(f),&\\
&v_{\bil}\left(f+h_{1}f+h_{2}g\right)=v_{\bil}(f),&\\
&v_{\bjo}\left(g+h_{3}f+h_{4}g\right)=v_{\bjo}(g),&\\
&v_{\bjl}\left(g+h_{3}f+h_{4}g\right)=v_{\bjl}(g).&
\end{eqnarray*}
\end{enumerate}
\end{lemma}

\begin{proof}
By replacing $g$ by $(\coeff_{\bio}(f)/\coeff_{\bjo}(g))\mathbf{x}^{\bio-\bjo}g$, we may assume that $f$, $g$ and $L$ satisfy $(\P)$.
Since inequalities about $\tau$ does not change by coordinate change, we may further assume that $f$, $g$ and $L$ satisfy the condition $(\P')$.
Then, the statements (1) and (2) clearly hold.

\end{proof}

\begin{lemma}\label{lem_f_g_G}
Let $f, g\in k[x^{\pm1}, y^{\pm1}]$ be Laurent polynomials and $L\in \cSIfg$ a ray or a line segment.
For any $\de > 0$ and $G\in \Elim$, the following hold.
\[
V(f, G)\cap \tropin(L)=V(g, G)\cap \tropin(L)=V(f, g)\cap \tropin(L).
\]
\end{lemma}

\begin{proof}
We may assume that $f$, $g$ and $L$ satisfy the condition $(\P')$.
Let $\de>0$ and $G\in \Elim$.
We show $V(f, G)\cap \tropin(L)=V(f, g)\cap \tropin(L)$.
We can show $V(g, G)\cap \tropin(L)=V(f, g)\cap \tropin(L)$ in the same way.
There exists $(h_{1}, h_{2}, h_{3}, h_{4})\in \Hf$ such that $G=g'-(d'/c')f'$, where $f'=f+h_{1}f+h_{2}g$, $g'=g+h_{3}f+h_{4}g$, $c'=\coeff_{\bil}(f')$ and $d'=\coeff_{\bil}(g')$.
Thus, we have
\begin{eqnarray*}
V(f, G)=V\left(f, \left(1+h_{4}-\frac{d'}{c'}h_{2}\right)g\right).
\end{eqnarray*}
Here, by Lemma \ref{lem_f_g} (2), we have $\val(d'/c')=0$.
Combined with $\trop(h_{2})(P)<0=\trop(1)(P)$ and $\trop(h_{4})(P)<0=\trop(1)(P)$ for all $P\in L$, it follows that
\[
V\left(1+h_{4}-\frac{d'}{c'}h_{2}\right)\cap \tropin(L)=\emptyset.
\]
Therefore, we have
\begin{eqnarray*}
V(f, G)\cap \tropin(L)&=&V(f)\cap V\left(\left(1+h_{4}-\frac{d'}{c'}h_{2}\right)g\right)\cap \tropin(L)\\
&=&V(f)\cap V(g)\cap \tropin(L)\\
&=&V(f, g)\cap \tropin(L).
\end{eqnarray*}
\end{proof}

\begin{notation}
We write $\mathbf{e}_1=(1, 0), \mathbf{e}_2=(0, 1)\in \mathbb{R}^2$ for the standard basis.
\end{notation}

\begin{lemma}\label{tatehoukou}
Let $h\in k[x^{\pm1}, y^{\pm1}]$, $\bj \in \mathbb{Z}^2$, $P'\in \mathbb{R}^2$, $\mathbf{v}_1 \in \mathbb{R}^2\setminus \{\mathbf{0}\}$ and $\mathbf{v}_2 \in \mathbb{R}^2\setminus \Aff(\mathbf{v}_1)$.
Let $\mathbf{w}\in \mathbb{R}^2\setminus \{\mathbf{0}\}$ a normal vector of $\mathbf{v}_1$ such that $\mathbf{w} \cdot \mathbf{v}_2<0$.
Assume that for a lattice point $\mathbf{i}\neq \bj$ in the half plane $\bj+\mathbb{R}\mathbf{v}_1+\mathbb{R}_{\geq 0}\mathbf{v}_2$, we have $\tau(h; \bj; P')>\tau(h; \bi; P')$.
Then, for all $P\in P'+\mathbb{R}_{\geq 0}\mathbf{w}$, we have
\[
\tau(h; \bj; P)> \tau(h; \mathbf{i}; P).
\]
\end{lemma}

\begin{proof}
Let $P\in P'+\mathbb{R}_{\geq 0}\mathbf{w}$.
Then, there exists a non-negative number $r\geq0$ such that $P=P'+r\mathbf{w}$, and hence, we have
\begin{eqnarray*}
\tau(h; \bj; P)-\tau(h; \bi; P)=(\tau(h; \bj; P')-\tau(h; \bi; P'))+r(\bj-\bi) \cdot \mathbf{w}>0.
\end{eqnarray*}
\end{proof}

\begin{remark}\label{claim_f_1}
Let $f, g\in k[x^{\pm1}, y^{\pm1}]$ and $L\in \cSI(f, g)$ satisfy $(\P)$.
Then, for an endpoint $P_+\in L$, by Lemma \ref{Lnohashi}, we have
\begin{eqnarray*}
\tau(f_1; \bip; P_+)>\tau(f_2; \bip; P_+),
\end{eqnarray*}
where $\{f_1, f_2\}=\{f, g\}$ and $\trop(V(f_1))$ has a vertex at $P_+$.
Therefore, either $v_{\bip}(f)<v_{\bip}(g)$ and $f_1=f$ or $v_{\bip}(f)>v_{\bip}(g)$ and $f_1=g$, and hence,
\[
v_{\bip}(f_1)=\min\{v_{\bip}(f), v_{\bip}(g)\}.
\]
\end{remark}

\begin{lemma}\label{claim_e_m_+}
Let $f, g\in k[x^{\pm1}, y^{\pm1}]$ and $L\in \cSI(f, g)$.
Let $\de > 0$ be a positive number and $G\in \Elim$.
Then, the following hold.
\begin{enumerate}
\item If $f$, $g$ and $L$ satisfy $(\P)$, then $v_{\bip}(G)=v_{\bip}(f_1)\ (=\min \{v_{\bip}(f), v_{\bip}(g)\})$.
\item Assume that a lattice point $\mathbf{i}\neq \bjp$ is in the half plane $\bjp+\mathbb{R}(\bjl-\bjo)+\mathbb{R}_{\geq 0}(\bjp-\bjo)$.
Then, for all $P\in L$, we have
\[
\tau(G; \mathbf{i}; P)<\tau(G; \bjp; P).
\]
\item Assume that $L\in \cLSfg$ and $\mathbf{i}\neq \bjm$ is a lattice point in the half plane $\bjm +\mathbb{R}(\bjl-\bjo)+\mathbb{R}_{\geq 0}(\bjm-\bjo)$.
Then, for any point $P\in L$, we have
\[
\tau(G; \mathbf{i}; P)<\tau(G; \bjm; P).
\]
\end{enumerate}
\end{lemma}

\begin{proof}
Let $G=g'-\frac{\coeff_{\bil}(g')}{\coeff_{\bil}(f')}f'$, where $f'=f+h_{1}f+h_{2}g$ and $g'=g+h_{3}f+h_{4}g$ with $(h_1, h_2, h_3, h_4)\in \Hf$.
To show (1), first note that, by Lemma \ref{lem_f_g}, we have
\begin{eqnarray*}
\tau(f; \bio; P_+)> \tau(h_{1}f+h_{2}g; \bip; P_+),\ \tau(h_{3}f+h_{4}g; \bip; P_+).
\end{eqnarray*}
Let $U_+$ be a sufficiently small neighborhood of $P_+$ and $P\in U_+\cap (\Aff(L)\setminus L)$ a point.
Then, we have
\begin{eqnarray*}
\tau(f; \bio; P)> \tau(h_{1}f+h_{2}g; \bip; P),\ \tau(h_{3}f+h_{4}g; \bip; P).
\end{eqnarray*}
Combined with Lemma \ref{Lnohashi}, this implies
\begin{eqnarray}
\tau(f_1; \bip; P)> \tau(h_{1}f+h_{2}g; \bip; P),\ \tau(h_{3}f+h_{4}g; \bip; P).
\end{eqnarray}
Now, by Lemma \ref{lem_f_g} (2), we have $\val(\coeff_{\bil}(g')/\coeff_{\bil}(f'))=0$ and
\begin{eqnarray*}
G=g-\frac{\coeff_{\bil}(g')}{\coeff_{\bil}(f')}f+h_{3}f+h_{4}g-\frac{\coeff_{\bil}(g')}{\coeff_{\bil}(f')}\left(h_{1}f+h_{2}g\right).
\end{eqnarray*}
Therefore, we have $\tau(G; \bip; P)=\tau(f_1; \bip; P)$ by (6) and Lemmas \ref{tau(f+g)} and \ref{Lnohashi}.
Thus, we have $v_{\bip}(G)=v_{\bip}(f_1)$.

Next, let us show (2).
(3) follows from (2) by symmetry.
We may assume that $f$, $g$ and $L$ satisfy the condition $(\P')$.
Let $P'\in L$.
By Lemmas \ref{lem_f_g} and \ref{tatehoukou}, we have
\begin{eqnarray*}
\max\{\tau\left(h_{1}f+h_{2}g; \mathbf{i}; P'\right), \tau\left(h_{3}f+h_{4}g; \mathbf{i}; P'\right)\}< \tau(f_1; \bip; P').
\end{eqnarray*}
Since $\mathbf{i}\neq \bip$, by Lemmas \ref{Lnohashi} and \ref{tatehoukou}, we have
\begin{eqnarray*}
\max\{\tau(f; \mathbf{i}; P'), \tau\left(g; \mathbf{i}; P'\right)\}<\tau(f_1; \bip; P').
\end{eqnarray*}
Therefore, by Lemma \ref{tau(f+g)}, we have
\begin{eqnarray*}
\tau(G; \mathbf{i}; P')<\tau(f_1; \bip; P').
\end{eqnarray*}
Since $v_{\bip}(f_1)=v_{\bip}(G)$, we have
\[
\tau(f_1; \bip; P')=\tau(G; \bip; P'),
\]
and hence, we have
\[
\tau(G; \mathbf{i}; P')<\tau(G; \bip; P').
\]
\end{proof}

\begin{corollary}\label{Ljounobip}
Let $f, g\in k[x^{\pm1}, y^{\pm1}]$ and $L\in \cSI(f, g)$.
Let $\de > 0$ be a positive number and $G\in \Elim$.
Then, for a point $P_1=P_++r_1\mathbf{v}_{P_+, L}$ (see Notation \ref{v_P,L}), where $r_1\in \mathbb{R}$, we have
\begin{eqnarray*}
\tau(G; \bjp; P)=\tau(g; \bjo; P_+)-r_1.
\end{eqnarray*}
If $L\in \cLSfg$, then for a point $P_2=P_-+r_2\mathbf{v}_{P_-, L}$ $(r_2\in \mathbb{R})$, we have
\begin{eqnarray*}
\tau(G; \bjm; P)=\tau(g; \bjo; P_-)-r_2.
\end{eqnarray*}
\end{corollary}

\begin{proof}
We may assume that $f$, $g$ and $L$ satisfy the condition $(\P')$.
By Lemmas \ref{Lnohashi} and \ref{claim_e_m_+} (1), for the point $P_1=(0, y_+-r_1)\in \mathbb{R}^2$, we have
\begin{eqnarray*}
\tau(G; \bip; P_1)&=&\tau(f_1; \bip; P_1)\\
&=&\tau(f_1; \bip; P_+)+r_1\bip \cdot (-\mathbf{e}_2)\\
&=&\tau(g; (0, 0); P_+)-r_1.
\end{eqnarray*}
Similarly, if $L\in \cLSfg$, we have $\tau(G; \bim; P_2)=\tau(g; (0, 0); P_-)-r_2$.
\end{proof}

\begin{notation}
For $L\in \cSIfg$ and $\de> 0$, we define
\[
L^{\de}_+=L\cap \{P_++r\mathbf{v}_{P_+, L}\ |\ 0\leq r<\de \}.
\]
If $L\in \cLSfg$, we also define
\[
L^{\de}_-=L\cap \{P_-+r\mathbf{v}_{P_-, L}\ |\ 0\leq r<\de \}.
\]
\end{notation}

\begin{lemma}\label{claim_three_index_ray2}
Let $f, g\in k[x^{\pm1}, y^{\pm1}]$ and $L\in \cSI(f, g)$.
Let $\de > 0$ be a positive number and $G\in \Elim$.
Then, the following hold.
\medbreak
\begin{enumerate}
\item $\forall n\in \mathbb{Z}\setminus \{0\},\ \forall P\in L,\ \tau(G; \bjo+n(\bjl-\bjo); P)<\tau(g; \bjo; P)-\de$.
\medbreak
\item If $L\in \cRayfg$, then for a sufficiently small neighborhood $U_+$ of $L^{\de}_+$, we have $\{\mathbf{i}\in \mathbb{Z}^2\ |\ \exists P\in U_+\text{ s.t. }\tau(G; \mathbf{i}; P)=\trop(G)(P)\} \subset \{\bjo, \bjp\}$.
\medbreak
\item If $L\in \cLSfg$, then for sufficiently small neighborhoods $U_+$ of $L^{\de}_+$ and $U_-$ of $L^{\de}_-$, we have $\{\mathbf{i}\in \mathbb{Z}^2\ |\ \exists P\in U_+\cup U_-\text{ s.t. }\tau(G; \mathbf{i}; P)=\trop(G)(P)\} \subset \{\bjo, \bjp, \bjm\}$.
\end{enumerate}
\end{lemma}

\begin{proof}
We may assume that $f$, $g$ and $L$ satisfy the condition $(\P')$.
Let us show (1).
Let $\de> 0$ and $P\in L$.
Since we have
\begin{eqnarray*}
\forall n\in \mathbb{Z}\setminus \{0, 1\},\ \mu(f'; \sembunioil)>\de\ \text{ and }\ \mu(g'; \sembunioil)>\de,
\end{eqnarray*}
and $G=g'-(\coeff_{\bil}(g')/\coeff_{\bil}(f'))f'$, we have
\begin{eqnarray*}
\forall n\in \mathbb{Z}\setminus \{0, 1\},\ v_{n0}(G)>\de,
\end{eqnarray*}
i.e.,
\[
\forall n\in \mathbb{Z}\setminus \{0, 1\},\ \tau(G; (n, 0); P)=-v_{n0}(G)<-\de.
\]
Noting that
\begin{eqnarray*}
\tau(G; (1, 0); P)=-v_{10}(G)=-\infty<-\de,
\end{eqnarray*}
we see that
\[
\forall n\in \mathbb{Z}\setminus \{0\},\ \tau(G; (n, 0); P)<-\de.
\]

Let us show (2).
(3) follows from (2) symmetry.
Since the number of the terms of $G$ is finite and each term of $\trop(G)$ is a continuous and piecewise linear map, it is sufficient to show that $\{\mathbf{i}\in \mathbb{Z}^2\ |\ \exists P\in L^{\de}_+\text{ s.t. }\tau(G; \mathbf{i}; P)=\trop(G)(P)\} \subset \{\bio, \bip\}$.
By the assumption that the three vertices of the corresponding $2$-simplex of $\Delta_{f_1}$ are $\bio$, $\bil$ and $\bip=(0, 1)$, we have $L=P_++\mathbb{R}_{\geq 0}(-\mathbf{e}_2)$, and the condition $\Phi_1(L)=\Phi_2(L)=\sembunioil$ implies
\[
\forall (i, j)\in \mathbb{Z}^2, j<0\Rightarrow c_{ij}=d_{ij}=0.
\]
Combined with Lemma \ref{claim_e_m_+} (2), it follows that
\[
\{\mathbf{i}\in \mathbb{Z}^2\ |\ \exists P\in L\text{ s.t. }\tau(G; \mathbf{i}; P)=\trop(G)(P)\} \subset \mathbb{Z}\mathbf{e}_1\cup \{\bip\}.
\]
Then, by (1) and Corollary \ref{Ljounobip}, for any $n\in \mathbb{Z}\setminus \{0\}$ and any $P\in L^{\de}_+$, if we write $P=P_++r(-\mathbf{e}_2)$ ($0\leq r<\de$), we have
\[
\tau(G; (n, 0); P)<\tau(g; (0, 0); P)-\de=\tau(g; (0, 0); P_+)-\de<\tau(G; \bip; P),
\]
and hence, the assertion holds.
\end{proof}

\section{Proofs of the main theorems}

The following proposition gives us a way of determining $\trop(V(f, g))\cap L$ for $L\in \cSIfg$, and will be the main tool in finding a polynomial that realizes the desired intersection.
Note that the points in $\trop(V(f, g))$ are equipped with the multiplicities coming from the intersection multiplicities of $V(f)\cap V(g)$.

\begin{proposition}\label{prop_d_00}
Let $f, g\in k[x^{\pm1}, y^{\pm1}]$ be Laurent polynomials.
\begin{enumerate}
\item Let $L\in \cRayfg$ be a ray.
Then, for $\de > 0$ and some (or any) $G\in \Elim$, we have
\[
\trop(V(f, g))\cap L^{\de}_+=
\begin{cases}
\{P_++(v_{\bjo}(G)-v_{\bjo}(g))\mathbf{v}_{P_+, L}\} & (v_{\bjo}(G)-v_{\bjo}(g)< \de),\\
\emptyset & (v_{\bjo}(G)-v_{\bjo}(g)\geq \de).
\end{cases}
\]
In particular, $\trop(V(f, g))\cap L=\emptyset$ if and only if for any $\de >0$ and $G\in \Elim$, we have $v_{\bjo}(G)-v_{\bjo}(g)\geq \de$.
\item Let $L\in \cLSfg$ be a line segment.
Let $l=\dist(P_+, P_-)$.
Then, for $\de > 0$ and $G\in \Elim$, we have
\begin{eqnarray*}
&&\trop(V(f, g))\cap (L^{\de}_+\cup L^{\de}_-)\\
&&=
\begin{cases}
\left\{
\begin{array}{l}
P_++(v_{\bjo}(G)-v_{\bjo}(g))\mathbf{v}_{P_+, L},\\
P_-+(v_{\bjo}(G)-v_{\bjo}(g))\mathbf{v}_{P_-, L}
\end{array}
\right\}
& \left(v_{\bjo}(G)-v_{\bjo}(g)<\min\left\{ \frac{l}{2}, \de\right\} \right),\\
\left\{ \frac{P_++P_-}{2}\right\}\ (\text{multiplicity}=2) & \left(\frac{l}{2}\leq v_{\bjo}(G)-v_{\bjo}(g)\ \text{and}\ \frac{l}{2}< \de\right),\\
\emptyset & \left(\de\leq \min \left\{ \frac{l}{2}, v_{\bjo}(G)-v_{\bjo}(g)\right\} \right).
\end{cases}
\end{eqnarray*}
In particular, if $\de>l/2$, then we have
\begin{eqnarray*}
\trop(V(f, g))\cap L=
\begin{cases}
\left\{
\begin{array}{l}
P_++(v_{\bjo}(G)-v_{\bjo}(g))\mathbf{v}_{P_+, L},\\
P_-+(v_{\bjo}(G)-v_{\bjo}(g))\mathbf{v}_{P_-, L}
\end{array}
\right\}
& \left(v_{\bjo}(G)-v_{\bjo}(g)<\frac{l}{2} \right),\\
\left\{ \frac{P_++P_-}{2}\right\}\ (\text{multiplicity}=2) & \left(\frac{l}{2}\leq v_{\bjo}(G)-v_{\bjo}(g)\right).
\end{cases}
\end{eqnarray*}
\end{enumerate}
\end{proposition}

\begin{proof}
We may assume that $f$, $g$ and $L$ satisfy the condition $(\P')$.
Let us show (1).
Let $\de>0$ and $G\in \Elim$.
Let $U_+$ be a sufficiently small neighborhood of $L^{\de}_+$.
By Corollary \ref{Ljounobip}, for a point $(0, y)\in \mathbb{R}^2$, we have $\tau(G; \bip; (0, y))=y-y_+$.

Assume that $v_{00}(G)< \de$.
Then, noting that $\tau(G; \bio; (0, y))=-v_{00}(G)$, we have
\begin{eqnarray*}
y_+-v_{00}(G)<y&\Rightarrow& \tau(G; \bip; (0, y))> \tau(G; \bio; (0, y)),\\
y=y_+-v_{00}(G)&\Rightarrow& \tau(G; \bip; (0, y))=\tau(G; \bio; (0, y)),\\
y<y_+-v_{00}(G)&\Rightarrow& \tau(G; \bip; (0, y))<\tau(G; \bio; (0, y)).
\end{eqnarray*}
Combined with Lemma \ref{claim_three_index_ray2} (2), it follows that $\Trop(V(G))\cap U_+\cap (x=0)=\{(0, y_+-v_{00}(G))\}$.
Note that we consider $U_+$ to deal with the case where $v_{00}(G)=0$.
Then, for $f_2=f$ or $g$, we have
\begin{eqnarray*}
\Trop(V(f_2))\cap \Trop(V(G))\cap U_+=\{(0, y_+-v_{00}(G))\}\ \ (\text{see Figure \ref{Fig_G_delta}}).
\end{eqnarray*}
Hence, $\{(0, y_+-v_{00}(G))\}$ is an isolated point of $\Trop(V(f_2)) \cap \Trop(V(G))$.
Note that the intersection multiplicity of $\Trop(V(f_2))$ and $\Trop(V(G))$ at $(0, y_+-v_{00}(G))$ is $1$ (see Figure \ref{Fig_G_delta}).
Hence, by Theorem \ref{OR6.13}, there exists a unique point $\mathbf{x}\in V(f_2, G)$ such that $\trop(\mathbf{x})=(0, y_+-v_{00}(G))$.
Thus, by Lemma \ref{lem_f_g_G}, we have
\[
\trop(V(f, g))\cap L^{\de}_+=\trop(V(f_2, G))\cap L^{\de}_+=\{(0, y_+-v_{00}(G))\}.
\]

If $v_{00}(G)\geq \de$ and $y\in (y_+-\de, y_+]$, then $\{\tau(G; \bi; (0, y))\ |\ \bi \in \mathbb{Z}^2\}$ takes the maximal value only at $\bi=\bip$, and we have $\Trop(V(G))\cap L^{\de}_+=\emptyset$.
In this case, by Lemma \ref{lem_f_g_G}, it follows that $\trop(V(f, g))\cap L^{\de}_+\subset \Trop(V(G))\cap L^{\de}_+=\emptyset$.

\begin{figure}[H]
\centering
\begin{tikzpicture}
\draw (-0.5,-0.75) circle [radius=0.275];
\draw (0.5,-0.75) circle [radius=0.275];
\draw (-0.2,0.5) circle [radius=0.275];

\draw (2.5,-0.75) circle [radius=0.275];
\draw (3.5,-0.75) circle [radius=0.275];

\draw (5.75,-2.05) circle [radius=0.275];
\draw (5.75,-0.7) circle [radius=0.275];
\draw (6,0) to [out=0, in=90] (6.5,-0.45);
\draw (6,-1.4) to [out=0, in=-90] (6.5,-0.95);
\coordinate [label=right:$v_{00}(G)$] (x1) at (6.2,-0.7);

\draw (4,0)--(4,-2);
\draw (5,0)--(5,-2);
\coordinate (A11) at (3,0);
\coordinate [label=right:$P_+$] (w1) at (2.95,0);
\coordinate (A12) at (3,0.3);
\coordinate (A13) at (3,-3);
\draw[very thick] (A13)--(3,1);
\coordinate [label=above:$\bio$] (a2) at (2.5,-0.99);
\coordinate [label=above:$\bil$] (a3) at (3.5,-0.99);

\draw (1,0) arc (0:180:1);
\draw (4,0) arc (0:180:1);
\draw (7,0) arc (0:180:1);
\draw (1,0)--(1,-2);
\draw (-1,0)--(-1,-2);
\draw (7,0)--(7,-0.45);
\draw (7,-0.95)--(7,-2);
\draw (2,0)--(2,-2);

\draw (1,-2) arc (0:-180:1);
\draw (4,-2) arc (0:-180:1);
\draw (7,-2) arc (0:-180:1);

\draw[very thick] (5,-1.4)--(7,-1.4);

\coordinate (A1) at (0,0);
\coordinate [label=right:$P_+$] (B1) at (-0.05,-0.2);
\coordinate (A2) at (0,0.3);
\coordinate (A3) at (0,-3);
\coordinate (A4) at (6,0);
\coordinate [label=above:$P_+$] (C1) at (6,0);
\coordinate (A5) at (6,0.3);
\coordinate (A6) at (6,-3);
\coordinate [label=right:$L^{\de}_+$] (A7) at (0,-2);

\coordinate [label=below:$U_+$] (B4) at (-1,1.2);
\coordinate [label=below:$U_+$] (B5) at (2,1.2);
\coordinate [label=below:$U_+$] (B5) at (5,1.2);

\coordinate [label=above:$\bip$] (a1) at (-0.19,0.22);
\coordinate [label=above:$\bio$] (a2) at (-0.48,-0.99);
\coordinate [label=above:$\bil$] (a3) at (0.5,-0.99);

\coordinate [label=above:$\bio$] (a4) at (5.77,-2.29);
\coordinate [label=above:$\bip$] (a5) at (5.76,-0.97);

\draw[very thick] (A1)--(A3);
\draw (A1)--(A3);
\draw[very thick] (-1,0)--(A1)--(0.709,0.709);
\draw[dotted, thick] (A4)--(6,-0.58);
\draw[dotted, thick] (6,-0.86)--(6,-1.875);
\draw[dotted, thick] (6,-2.25)--(6,-3);

\foreach \t in {1,4,11} \fill[black] (A\t) circle (0.06);
\coordinate [label=below:$\Trop(V(f_1))$] (a) at (0,-3.3);
\coordinate [label=below:$\Trop(V(f_2))$] (a) at (3,-3.3);
\coordinate [label=below:$\Trop(V(G))$] (b) at (6,-3.3);
\fill[black] (0,-3) circle (0.06);
\fill[black] (3,-3) circle (0.06);
\fill[black] (6,-3) circle (0.06);
\fill[white] (0,-3) circle (0.045);
\fill[white] (3,-3) circle (0.045);
\fill[white] (6,-3) circle (0.045);
\end{tikzpicture}
\caption{$\Trop(V(f_1))$, $\Trop(V(f_2))$ and $\Trop(V(G))$ in a neighborhood $U_+$ of $L^{\de}_+$.}
\label{Fig_G_delta}
\end{figure}

Let us show (2).
Let $\de>0$ and $G\in \Elim$.
Note that $L=\{(0, y)\ |\ y_-\leq y\leq y_+\}$, $l=y_+-y_-$ and that by Corollary \ref{Ljounobip}, we have
\begin{eqnarray*}
\tau(G; \bip; (0, y))=y-y_+,\ \ \tau(G; \bim; (0, y))=y_--y.
\end{eqnarray*}
First, consider the case where $v_{00}(G)<\min\{l/2, \de\}$.
Let $y_1:=y_+-v_{00}(G)$.
Then, we have
\begin{eqnarray*}
y_1<y&\Rightarrow& \tau(G; \bip; (0, y))> \tau(G; \bio; (0, y))> \tau(G; \bim; (0, y)),\\
y=y_1&\Rightarrow& \tau(G; \bip; (0, y))=\tau(G; \bio; (0, y))>\tau(G; \bim; (0, y)),\\
y_+-\frac{l}{2}< y<y_1&\Rightarrow& \tau(G; \bio; (0, y))> \tau(G; \bip; (0, y))> \tau(G; \bim; (0, y)).
\end{eqnarray*}
Combined with Lemma \ref{claim_three_index_ray2} (3), it follows that $V(\trop(G))\cap L^{\de}_+\cap L^{\frac{l}{2}}_+=\{(0, y_1)\}$, and in the same way as in (1), we have
\[
\trop(V(f, g))\cap L^{\de}_+\cap L^{\frac{l}{2}}_+=\{(0, y_+-v_{00}(G))\}.
\]
Similarly, we have 
\[
\trop(V(f, g))\cap L^{\de}_-\cap L^{\frac{l}{2}}_-=\{(0, y_-+v_{00}(G))\}.
\]
By considering the intersection multiplicity, we have
\[
\trop(V(f, g))\cap (L^{\de}_+\cup L^{\de}_-)=\{(0, y_+-v_{00}(G)), (0, y_-+v_{00}(G))\}
\]
(in fact, this is equal to $\trop(V(f, g))\cap L$).

Next, consider the case where $l/2\leq v_{00}(G)$ and $l/2<\de$.
Then, we have
\begin{eqnarray*}
\frac{y_++y_-}{2}<y&\Rightarrow& \tau(G; \bip; (0, y))> \tau(G; \bim; (0, y)),\ \tau(G; \bio; (0, y)),\\
y=\frac{y_++y_-}{2}&\Rightarrow& \tau(G; \bip; (0, y))=\tau(G; \bim; (0, y))\geq \tau(G; \bio; (0, y)),\\
y<\frac{y_++y_-}{2}&\Rightarrow& \tau(G; \bim; (0, y))> \tau(G; \bip; (0, y)),\ \tau(G; \bio; (0, y)).
\end{eqnarray*}
Combined with Lemma \ref{claim_three_index_ray2} (3), it follows that
\[
V(\trop(G))\cap (L^{\de}_+\cup L^{\de}_-)=V(\trop(G))\cap L=\left\{\left(0, \frac{y_++y_-}{2}\right)\right\}.
\]
and in the same way as in (1), we have
\[
\trop(V(f, g))\cap (L^{\de}_+\cup L^{\de}_-)=\left\{\left(0, \frac{y_++y_-}{2}\right)\right\}.
\]
By Theorem \ref{OR6.13}, the multiplicity is $2$.

Finally, consider the case where $\de \leq \min\{l/2, v_{00}(G)\}$.
Here, we have
\begin{eqnarray*}
y_+-\de<y&\Rightarrow& \tau(G; \bip; (0, y))> \tau(G; \bim; (0, y)),\ \tau(G; \bio; (0, y)),\\
y<y_-+\de&\Rightarrow& \tau(G; \bim; (0, y))> \tau(G; \bip; (0, y)),\ \tau(G; \bio; (0, y)).
\end{eqnarray*}
Combined with Lemma \ref{claim_three_index_ray2} (3), it follows that
\[
V(\trop(G))\cap (L^{\de}_+\cup L^{\de}_-)=\emptyset,
\]
and hence, $\trop(V(f, g))\cap (L^{\de}_+\cup L^{\de}_-)=\emptyset$.

Thus, we conclude the proof of Proposition \ref{prop_d_00}.
\end{proof}

The following corollary is immediate.

\begin{corollary}\label{raynihaikko}
Let $f, g\in k[x^{\pm1}, y^{\pm1}]$ be Laurent polynomials and $L\in \cRay(f, g)$ a ray.
Then, there is at most one point, counted with multiplicity, in the intersection $\trop(V(f, g))\cap L$.
\end{corollary}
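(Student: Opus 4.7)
The plan is to reduce to Proposition~\ref{prop_d_00}(i), which already establishes the corollary under the normalized hypothesis $(\P)$. Given $L \in \cRay(f,g)$, the task is to perform a change of variables and a scaling of $f$ and $g$ to bring the triple $(f, g, L)$ into the form required by $(\P)$, so that the proposition applies directly.

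First, by applying a unimodular integer affine transformation to $\mathbb{R}^2$ (equivalently, a monomial substitution in $k[x^{\pm1}, y^{\pm1}]$), we may assume that $L$ lies on a vertical line and points downward from its unique endpoint $P_+ = (p_1, y_+)$. By Lemma~\ref{L}(ii), $L$ is contained in an edge of multiplicity $1$ in each of $\Trop(V(f))$ and $\Trop(V(g))$, so the corresponding $1$-simplices $\Phi_1(L) \subset \Delta_f$ and $\Phi_2(L) \subset \Delta_g$ are both horizontal primitive segments. Multiplying $f$ and $g$ by suitable monomials $\mathbf{x}^{\mathbf{u}}$ translates their Newton polygons without altering $V(f)$, $V(g)$, $V(f,g)$, or their tropicalizations, so we may arrange $\Phi_1(L) = \Phi_2(L) = \overline{(0,0)(1,0)}$. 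Next, by multiplying $f$ and $g$ by constants in $k^*$ of appropriate valuation, we can set $\val(c_{00}) = \val(d_{00}) = 0$; the identity $\val(c_{10}) = \val(d_{10}) = p_1$ then follows automatically, since both horizontal edges of $\Trop(V(f))$ and $\Trop(V(g))$ lie on the vertical line $x = p_1$.

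Finally, Lemma~\ref{L}(ii) ensures that $P_+$ is a smooth vertex in exactly one of $\Trop(V(f))$ and $\Trop(V(g))$, and the smoothness of the corresponding $2$-simplex, together with the fixed edge $\overline{(0,0)(1,0)}$ and the direction of $L$, forces the third vertex to be $\mathbf{i}_+ = (a, 1)$ for some $a \in \mathbb{Z}$ (possibly after composing with a reflection in the original affine transformation to fix the sign of the $y$-coordinate). Thus condition $(\P)$ is satisfied, and Proposition~\ref{prop_d_00}(i) yields that $\trop(V(f,g)) \cap L$ is either empty or equals $\{(p_1, y_+ - \de_m)\}$, giving $|\trop(V(f,g)) \cap L| \le 1$ as required. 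The only point that deserves care is checking that each reduction step preserves $\trop(V(f,g)) \cap L$ set-theoretically; this is immediate since multiplication by units and by monomials in $k[x^{\pm1}, y^{\pm1}]$ does not change the zero locus in $(k^*)^2$, and unimodular affine transformations of the exponents induce isomorphisms of tori that commute with $\trop$.
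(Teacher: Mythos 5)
Your proposal is correct and follows the same route as the paper: the paper derives this corollary directly from Proposition~\ref{prop_d_00}(i) (declaring it ``clear''), with the normalization to condition $(\P)$ carried out exactly as you describe, via a unimodular affine change of exponents and multiplication by monomials and units, just as in the proof of Corollary~\ref{cor_d_00}. You have merely made explicit the reduction the paper leaves implicit.
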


The following corollary shows a special case of the main theorems where $\cSI'$ consists of one element.

\begin{corollary}\label{cor_d_00}
Let $f$ and $g=\sum_{i, j}d_{ij}x^iy^j$ be Laurent polynomials in $k[x^{\pm1}, y^{\pm1}]$ and $D$ a divisor satisfying the condition $(*)$ in Definition \ref{star}.
Let $L\in \cSI(f, g)$ be a ray or a line segment and let $\Phi_2(L)=\sembunjojl$.
Then there exists an element $\tilde{d}_{\bjo}\in k$ such that if we set $g':=g-d_{\bjo}{\mathbf{x}}^{\bjo}+\tilde{d}_{\bjo}{\mathbf{x}}^{\bjo}$, we have $\trop(g)=\trop(g')$ and $\trop(V(f, g'))|_L=D|_L$.
\end{corollary}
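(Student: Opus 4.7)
The plan is to apply Proposition \ref{prop_d_00} and solve explicitly for $\tilde{d}_{\bio}$. After an integral affine change of coordinates, multiplication of $f$ and $g$ by suitable unit monomials, and the further normalization $c_{\bio}=c_{\bil}=1$ used in the proof of Lemma \ref{lem_f_L}, we may assume condition $(\P)$ holds and $\val(d_{\bio})=\val(d_{\bil})=0$. By $(*)$, Lemma \ref{delta_D,L}, and Corollary \ref{raynihaikko}, the divisor $D|_L$ is determined by a single number $v^*:=\dist(D|_L,E|_L)$ lying in the value group: one point when $L\in\cRayfg$, and two points equidistant from the endpoints (coinciding as a doubled midpoint when $v^*$ equals the half-length $\de'$) when $L\in\cLSfg$. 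Set $\de:=v^*$ if $L\in\cRayfg$ and $\de:=\de'$ if $L\in\cLSfg$, and write $G_{\de}(g):=G(\de;g;f;\sembunioil)$ with constant coefficient $e^{\de}_{00}(g)$.

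The core observation is that $e^{\de}_{00}$ depends affinely with slope one on the coefficient $d_{\bio}$: if we put $g':=g+c\,\mathbf{x}^{\bio}$ with $c:=\tilde{d}_{\bio}-d_{\bio}$, then because the iterative construction of $h(\de;g;f;\sembunioil)$ in the proof of Lemma \ref{lem_f_L} only uses coefficients of $g$ at indices different from $\bio$ and $\bil$ (cf.\ Claim \ref{calcu}), we have $h(\de;g';f;\sembunioil)=h(\de;g;f;\sembunioil)$. It follows that $g'_{\de}=g_{\de}+c\,\mathbf{x}^{\bio}$ while $f_{\de}$ and the ratio $d'_{\bil}/c'_{\bil}$ are unchanged, so $G_{\de}(g')=G_{\de}(g)+c\,\mathbf{x}^{\bio}$ and $e^{\de}_{00}(g')=e^{\de}_{00}(g)+c$. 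By Proposition \ref{prop_d_00}, realizing $\trop(V(f,g'))|_L=D|_L$ reduces to arranging $\val(e^{\de}_{00}(g'))=v^*$. Choose any $\eta\in k$ with $\val(\eta)=v^*$ and set $\tilde{d}_{\bio}:=d_{\bio}-e^{\de}_{00}(g)+\eta$, so that $e^{\de}_{00}(g')=\eta$ hits the target.

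The remaining task, and the main subtlety, is to verify that $\val(\tilde{d}_{\bio})=\val(d_{\bio})=0$, which is what makes $\trop(g)=\trop(g')$. The potential obstruction is a cancellation $\overline{d_{\bio}}\equiv\overline{e^{\de}_{00}(g)}\pmod{\fm}$, which together with $\overline{\eta}\equiv 0$ (when $v^*>0$) would force $\val(\tilde{d}_{\bio})>0$. To rule this out we track reductions modulo $\fm$ through the algorithm: every coefficient of $h(\de;g;f;\sembunioil)$ and $h(\de;f;f;\sembunioil)$ has strictly positive valuation, so the corrections turning $d_{\bio},d_{\bil},c_{\bio},c_{\bil}$ into $d'_{\bio},d'_{\bil},c'_{\bio},c'_{\bil}$ all vanish mod $\fm$; combined with the normalization $c_{\bio}=c_{\bil}=1$ this gives $\overline{e^{\de}_{00}(g)}=\overline{d_{\bio}}-\overline{d_{\bil}}$, whence $\overline{d_{\bio}}-\overline{e^{\de}_{00}(g)}=\overline{d_{\bil}}\neq 0$. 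Hence $\val(\tilde{d}_{\bio})=0$ for any admissible choice of $\eta$ (in the borderline case $v^*=0$ we additionally choose $\bar\eta\neq -\overline{d_{\bil}}$ in the residue field), and Proposition \ref{prop_d_00} applied to $g'$ yields $\trop(V(f,g'))|_L=D|_L$.
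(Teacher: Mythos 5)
Your proposal is correct and follows essentially the same route as the paper: normalize to condition $(\P)$, observe that $h(\de; g; f; \sembunioil)$ is unchanged when only the coefficient $d_{\bio}$ is modified, set $\tilde{d}_{\bio}=d_{\bio}-e^{\de}_{00}+\eta$ with $\val(\eta)=\dist(D|_L, E|_L)$, and conclude via Proposition \ref{prop_d_00}. The only divergence is the sub-case $D|_L=E|_L$, where the paper invokes Theorem \ref{MS3.6.1} to produce a translate realizing the stable intersection and reads off the coefficient from it, whereas you handle all cases uniformly through the residue identity $\overline{e^{\de}_{00}}=\overline{d_{\bio}}-\overline{d_{\bil}}$ and a generic choice of $\bar{\eta}$; both work, and your variant is slightly more self-contained.
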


\begin{proof}
We will show the statement in the case where $L\in \cRay(f, g)$, and the proof in the case where $L\in \cLSfg$ is similar.
We may assume that $f$, $g$, $L$ and an endpoint $P_+:=(0, y_+)\in L$ satisfy the condition $(\P')$.
Let $P_1=(0, y_+-\kappa)$ ($\kappa \geq 0$) be the intersection point of $D$ and $L$.
Recall that we are using Notation \ref{NOTA} and $P_+$ is a vertex of $\Trop(V(f_1))$.
Since we have $\tau(f_1; \bip; P_+)=\tau(f_1; \bio; P_+)=0$ and $P_1=P_+-\kappa \mathbf{e}_2$,
we have
\begin{eqnarray*}
\tau(f_1; \bip; P_1)=\tau(f_1; \bip; P_+)-\kappa (\bip \cdot \mathbf{e}_2)=-\kappa.
\end{eqnarray*}
Thus, we have
\[
\kappa=-\tau(f_1; \bip; P_1)=v_{\bip}(f_1)-\bip \cdot P_1.
\]
Since the coordinates of $P_1$ are assumed to belong to the value group, there exists an $\alpha\in k^*$ such that $\val(\alpha)=v_{\bip}(f_1)-\bip \cdot P_1=\kappa$.
Let $\de>\kappa$, $h_{\de}:=h(\de; f, f; \sembunioil), h'_{\de}:=h(\de; g, f; \sembunioil)$, $f_{\de}:=f+h_{\de}(x)f=\sum_{i, j}c'_{ij}x^iy^j$ , $g_{\de}:=g+h'_{\de}(x)f=\sum_{i, j}d'_{ij}x^iy^j$ and $G_{\de}:=g_{\de}-(d'_{10}/c'_{10})f_{\de}=\sum_{i, j}e_{ij}x^iy^j$.
Then $G_{\de}\in \Elim$ (see Definitions \ref{def_G_de} and \ref{H_Elim}), and by the construction of $g_{\de}$, the term $\beta:=d'_{00}-d_{00}\in k$ satisfies $\val(\beta)>0$.
We set
\begin{equation*}
\tilde{d}_{00}=\alpha-\beta+\frac{d'_{10}}{c'_{10}}c'_{00}=\alpha+d_{00}-d'_{00}+\frac{d'_{10}}{c'_{10}}c'_{00}=\alpha+d_{00}-e_{00}.
\end{equation*}
Since we have
\begin{eqnarray*}
\val(\alpha)=\kappa \geq 0,\ \val(\beta)>0,\ \val\left(\frac{d'_{10}}{c'_{10}}c'_{00}\right)=0,
\end{eqnarray*}
we have $\val(\tilde{d}_{00})=0=\val(d_{00})$ if $\kappa>0$.
If $\kappa=0$, we may assume the same by replacing $\alpha$ if neccesary.
Let $g':=g-d_{00}+\tilde{d}_{00}$.
Then, we have $\trop(g')=\trop(g)$.
Note that
\[
h(\de; g', f; \sembunioil)=h(\de; g-d_{00}+\tilde{d}_{00}, f; \sembunioil)=h(\de; g, f; \sembunioil)=h'_{\de},
\]
since in the algorithm of Lemma \ref{lem_f_L}, the coefficient of $g$ at $\bio$ is not used.
For the Laurent polynomial
\[
G'_{\de}:=g'+h'_{\de}(x)f-\frac{d'_{10}}{c'_{10}}f_{\de}=G_{\de}-d_{00}+\tilde{d}_{00}=\sum_{i, j}e'_{ij}x^iy^j,
\]
we have $e'_{00}=\alpha$ and $e'_{\bi}=e_{\bi}$ ($\bi \neq (0, 0)$).
Here, we have $\val(e'_{00})=\val(\alpha)=\kappa$, and hence, by Proposition \ref{prop_d_00}, we have $\trop(V(f, g'))|_L=D|_L$.
\end{proof}

\begin{remark}\label{irekae}
In Corollary \ref{cor_d_00}, we change the coefficient $d_{\bio}$.
By symmetry, we may change the coefficient $d_{\bil}$ instead.
\end{remark}

\begin{corollary}\label{cor}
Let $f$ and $g$ be Laurent polynomials in $k[x^{\pm1}, y^{\pm1}]$, $L\in \cSI(f, g)$ a ray or a line segment and $\Phi_2(L)=\sembunjojl\in \Delta_g$.
Let $D:=\trop(V(f, g))|_L$, and assume that $D\neq 0$ if $L$ is a ray.
Let $g' \in k[x^{\pm1}, y^{\pm1}]$ be a Laurent polynomial such that $\trop(g)=\trop(g')$ and
\[
v_{\bjo+n(\bjl-\bjo)}(g'-g)>v_{\bjo}(g)+n(v_{\bjl}(g)-v_{\bjo}(g))+\dist(D, E|_L)\ (n\in \mathbb{Z}),
\]
where $E$ is the stable intersection divisor of $\Trop(V(f))$ and $\Trop(V(g))$.
Then, we have
\[
\trop(V(f, g'))|_L=\trop(V(f, g))|_L=D.
\]
\end{corollary}

\begin{proof}
We may assume that $f$, $g$ and $L$ satisfy the condition $(\P')$.
Note that since $\trop(g)=\trop(g')$, we have $L\in \cSI(f, g')$ and  $L$ is contained in the edge of $V(\trop(g'))$ corresponding to $\sembunioil\in \Delta_{g'}$, and hence, $f$, $g'$ and $L$ also satisfy the condition $(\P')$.
Since $\min(v_{\bjo+n(\bjl-\bjo)}(g'-g))>\dist(D, E|_L)$, we can take $\de$ such that $\de>\dist(D, E|_L)$ and $\de<v_{\bjo+n(\bjl-\bjo)}(g'-g)$ for any $n\in \mathbb{Z}$.
Let $h_{\de}:=h(\de; f, f; \sembunioil)$ and $h'_{\de}:=h(\de; g, f; \sembunioil)$.
Let $f_{\de}:=f+h_{\de}(x)f=\sum_{i, j}c'_{ij}x^iy^j$, $g_{\de}:=g+h'_{\de}(x)f=\sum_{i, j}d'_{ij}x^iy^j$ and $g'_{\de}:=g'+h'_{\de}(x)f=\sum_{i, j}d''_{ij}x^iy^j$.
Then, we have $g'_{\de}=g'+h'_{\de}(x)f=(g'-g)+g_{\de}$.
Here, by the assumption, we have
\[
v_{\bio+n(\bil-\bio)}(g'-g)>\de\ (n\in \mathbb{Z}).
\]
Combined with Lemma \ref{lem_f_g} (2), this implies that $\mu\left(g'+h'_{\de}(x)f; \sembunioil \right)>\de$, and hence, $(h_{\de}, 0, h'_{\de}, 0)\in H_4(\de; f, g'; L)$.
Let $G_{\de}:=g_{\de}-(d'_{10}/c'_{10})f_{\de}$ and $G'_{\de}:=g'_{\de}-(d''_{10}/c'_{10})f_{\de}$.
Then, we have
\begin{eqnarray*}
G'_{\de}&=&(g'-g)+g_{\de}-\frac{d'_{10}+\coeff_{00}(g'-g)}{c'_{10}}f_{\de}\\
&=&(g'-g)+G_{\de}-\frac{\coeff_{00}(g'-g)}{c'_{10}}f_{\de},
\end{eqnarray*}
and hence, $v_{00}(G'_{\de})=v_{00}(G_{\de})=\dist(D, E|_L)$.
Thus, by Proposition \ref{prop_d_00}, we have
\[
\trop(V(f, g'))|_L=\trop(V(f, g))|_L=D.
\]
\end{proof}

\begin{theorem}\label{thm_main1}
Let $f, g\in k[x^{\pm1}, y^{\pm1}]$ be Laurent polynomials and $D$ a divisor satisfying the condition $(*)$ in Definition \ref{star}.
Let $\cSI'$ be a subset of $\cSI(f, g)$ and write $\cPI:=\cPI(f, g)$.
Assume that $\cSI'$ is acyclic with respect to $\Phi_2$ and that for each $L\in \cSI'$, we have $\dist(D|_L, E|_L)<\mu(g; \Phi_2(L))$.
Then, there exists $g'\in k[x^{\pm1}, y^{\pm1}]$ such that $\trop(g')=\trop(g)$ and
\[
\trop(V(f, g'))|_{\cSI' \cup \cPI}=D|_{\cSI' \cup \cPI}.
\]
\end{theorem}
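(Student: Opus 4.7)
The plan is to process the elements of $\cSI'$ one at a time and, at each stage, modify a single coefficient of $g$ at a carefully chosen lattice point, so that the forest structure on $\Delta':=\Phi_2(\cSI')$ guarantees that the modifications do not interfere with one another. First I would dispose of the $\cPI$ part for free: for each isolated point $P\in\cPI$, the condition $\Supp(\psi)\subset\Trop(V(f))\cap\Trop(V(g))$ forces $\psi$ to vanish on a punctured neighborhood of $P$ in $\Trop(V(f))$, and hence by continuity on a full neighborhood, so $(\psi)|_{\{P\}}=0$ and $D|_{\{P\}}=E|_{\{P\}}$. For any $g'$ with $\trop(g')=\trop(g)$, Theorem \ref{OR6.13} applied to the bounded connected component $\{P\}$ of $\Trop(V(f))\cap\Trop(V(g'))$ yields $\trop(V(f,g'))|_{\{P\}}=i(P;\Trop(V(f))\cdot\Trop(V(g)))\{P\}=E|_{\{P\}}=D|_{\{P\}}$, so the $\cPI$ half of the conclusion holds automatically.

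Next I would use the acyclicity to linearize the remaining work. Since $\Phi_2|_{\cSI'}$ is injective and the union of the $1$-simplices of $\Delta'$ is a forest, iteratively pruning leaf edges gives an enumeration $\Delta'=\{e_1,\dots,e_m\}$ such that each $e_k$ is a leaf edge of the subforest $\Delta'_k:=\{e_k,\dots,e_m\}$. For each $k$, let $\mathbf{j}_k$ be a leaf endpoint of $e_k$ in $\Delta'_k$ and let $L_k\in\cSI'$ be the unique element with $\Phi_2(L_k)=e_k$. The critical property is that $\mathbf{j}_k$ is not an endpoint of any $e_l$ with $l>k$.

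The main step is an iterative construction. Set $g^{(m+1)}:=g$, and for $k=m,m-1,\dots,1$, apply Corollary \ref{cor_d_00} (invoking Remark \ref{irekae} if necessary to select the endpoint $\mathbf{j}_k$ of $e_k$) to the pair $(f,g^{(k+1)})$ and the component $L_k$, producing $g^{(k)}$ that differs from $g^{(k+1)}$ only in the coefficient at $\mathbf{j}_k$, with $\trop(g^{(k)})=\trop(g^{(k+1)})$ and $\trop(V(f,g^{(k)}))|_{L_k}=D|_{L_k}$. The invariant to maintain is
\[
\trop(V(f,g^{(k)}))|_{L_l}=D|_{L_l}\qquad\text{for every } l\ge k.
\]
The case $l=k$ holds by construction; for $l>k$ the invariant is preserved by Corollary \ref{cor} applied to $g^{(k+1)}$ and $g^{(k)}$ on $L_l$ with $\de:=\dist(D|_{L_l},E|_{L_l})$, since the two polynomials need only agree at the lattice points of $l_\de$, and the margin hypothesis $\de<\mu(g;e_l)$ forces $l_\de$ to consist of just the two endpoints of $e_l$, neither of which is $\mathbf{j}_k$ by the leaf property in $\Delta'_k$. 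Then $g':=g^{(1)}$ satisfies the conclusion.

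The delicate point is the potential interference between successive coefficient changes: modifying $d_{\mathbf{j}_k}$ could in principle alter $\trop(V(f,g^{(k)}))|_{L_l}$ exactly when $\mathbf{j}_k$ is an endpoint of $e_l$. The acyclicity hypothesis is precisely what permits a leaf-removal order in which this never occurs for $l>k$, and the margin condition $\dist(D|_L,E|_L)<\mu(g;\Phi_2(L))$ is precisely what trims the consistency lattice $l_\de$ of Corollary \ref{cor} down to the two endpoints of $e_l$, so that only direct edge adjacency in $\Delta'$, and not a more subtle collinearity along the affine span, can cause trouble. Without either hypothesis one would expect cycles of mutual dependence in the coefficient updates and the induction to break down.
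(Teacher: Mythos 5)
Your proposal is correct and follows essentially the same route as the paper: both arguments build up the forest $\Delta'$ one leaf edge at a time (your reverse leaf-pruning enumeration is equivalent to the paper's normal vertex ordering), modify the single coefficient at the newly exposed endpoint via Corollary \ref{cor_d_00} and Remark \ref{irekae}, and use Corollary \ref{cor} together with the margin hypothesis $\dist(D|_L,E|_L)<\mu(g;\Phi_2(L))$ (which shrinks $l_{\de}$ to the two endpoints) to show that later modifications do not disturb earlier ones, with Theorem \ref{OR6.13} handling $\cPI$. No gaps.
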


\begin{proof}
Let $g=\sum_{i, j}d_{ij}x^iy^j$ and $\mathcal{C}$ the union of the elements of $\Delta':=\Phi_2(\cSI')$.
We number and order the endpoints of the elements of $\Delta'$ as $p_1< \dots < p_n$ so that this ordering is normal on each tree of the forest.
We write $L_{ij}\in \cSI'$ for the ray or the line segment corresponding to $\sembunpipj\in \Delta'$.
We will construct $g'=g-\sum_{i=1}^{n}d_{p_i}\mathbf{x}^{p_i}+\sum_{i=1}^{n}\tilde{d}_{p_i}\mathbf{x}^{p_i}$ by determining $g_{j}:=g-\sum_{i=1}^{j}d_{p_i}\mathbf{x}^{p_i}+\sum_{i=1}^{j}\tilde{d}_{p_i}\mathbf{x}^{p_i}$ ($j=1, \dots, n$) inductively.
Assume that we have determined $g_{t-1}$ with $\trop(g)=\trop(g_{t-1})$ and so that $\trop(V(f, g_{t-1}))|_{L}=D|_{L}$ holds for $L\in \cSI'$ if both vertices of $\Phi_2(L)$ belong to $\{p_1, \dots, p_{t-1}\}$.
Let $T$ be the connected component of $\mathcal{C}$ containing $p_t$, and $m=\min \{i\in \mathbb{Z}\ |\ p_i\in T\}$.
If $t=m$, we set $\tilde{d}_{p_t}=d_{p_t}$.
If $t>m$, there is a unique $s$ such that the path $p_mTp_t$ contains $\sembunpspt$.
By the normality of the ordering, $s<t$ holds, and $\tilde{d}_{p_s}$ is already determined.
By the assumption, we have $\dist(D|_{L_{st}}, E|_{L_{st}})<\mu(g; \sembunpspt)=\mu(g_{t-1}; \sembunpspt)$.
By Corollary \ref{cor_d_00} and Remark \ref{irekae}, we determine an element $\tilde{d}_{p_t}\in k$ such that, if we set $g_t=g_{t-1}-d_{p_t}\mathbf{x}^{p_t}+\tilde{d}_{p_t}\mathbf{x}^{p_t}$, then we have $\val(\tilde{d}_{p_t})=\val(d_{p_t})$ and $\trop(V(f, g_t))|_{L_{st}}=D|_{L_{st}}$.
Note that $p_t$ might be contained in $\Aff(\overline{p_qp_r})$ ($q<r<t$, $\overline{p_qp_r}\in \Delta'$).
To show that $\trop(V(f, g_{t}))|_{L_{qr}}=\trop(V(f, g_{t-1}))|_{L_{qr}}$, we check the inequality
\[
v_{p_q+n(p_r-p_q)}(g_t-g_{t-1})>v_{p_q}(g_{t-1})+n(v_{p_r}(g_{t-1})-v_{p_q}(g_{t-1}))+\kappa_{qr},
\]
where $\kappa_{qr}:=\dist(D|_{L_{qr}}, E|_{L_{qr}})$, and apply Corollary \ref{cor}.
This clearly holds for $n=0$ and $1$.
For $n\neq0, 1$, this follows from
\begin{eqnarray*}
&&v_{p_q+n(p_r-p_q)}(g_t-g_{t-1})-v_{p_q}(g_{t-1})-n(v_{p_r}(g_{t-1})+v_{p_q}(g_{t-1}))-\kappa_{qr}\\
&\geq&v_{p_q+n(p_r-p_q)}(g_{t-1})-v_{p_q}(g_{t-1})-n(v_{p_r}(g_{t-1})+v_{p_q}(g_{t-1}))-\kappa_{qr}\\
&\geq&\mu(g_{t-1}; \sembunpqpr)-\kappa_{qr}\\
&>&0.
\end{eqnarray*}

By repeating this process, we get a Laurent polynomial $g'=g-\sum_{i=1}^{n}d_{p_i}\mathbf{x}^{p_i}+\sum_{i=1}^{n}\tilde{d}_{p_i}\mathbf{x}^{p_i}$ such that for all $L\in \cSI'$, we have
\[
\trop(V(f, g'))|_L=D|_L.
\]

Since we have $\trop(g')=\trop(g)$, we have $\cPI(f, g)=\cPI(f, g')\subset \trop(V(f, g'))$ with the multiplicities taken into account by Theorem \ref{OR6.13}.
This concludes the proof of Theorem \ref{thm_main1}.
\end{proof}

\begin{theorem}\label{thm_main2}
Let $f, g\in k[x^{\pm1}, y^{\pm1}]$ be Laurent polynomials and $D$ a divisor satisfying the condition $(*)$ in Definition \ref{star}.
Let $\cSI'$ be a subset of $\cSI(f, g)$ and write $\cPI:=\cPI(f, g)$.
Assume that $\cSI'$ is acyclic with respect to $\Phi_2$ and that we can number and order the endpoints of the elements of $\Delta':=\Phi_2(\cSI')$ as $p_1< \dots < p_n$ so that this order is normal on each tree of the forest and that for each element $\sembunpipj$ of $\Delta'$, its affine span $\Aff(\sembunpipj)$ does not contain a point $p_l$ with $l>i, j$.
Then, there exists $g'\in k[x^{\pm1}, y^{\pm1}]$ such that $\trop(g')=\trop(g)$ and 
\[
\trop(V(f, g'))|_{\cSI' \cup \cPI}=D|_{\cSI' \cup \cPI}.
\]
\end{theorem}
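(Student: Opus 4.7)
The plan is to imitate the inductive algorithm from the proof of Theorem \ref{thm_main1}, using the affine span conditions in place of the $\mu$-bound. Let $\mathcal{C}$ be the union of the elements of $\Delta' := \Phi_2(\cSI')$, a forest by acyclicity, with endpoints ordered as $p_1 < \dots < p_n$ as supplied by the hypotheses. I will construct $g'$ by determining the coefficients $\tilde{d}_{p_1}, \dots, \tilde{d}_{p_n}$ successively, leaving $\tilde{d}_{\mathbf{i}} = d_{\mathbf{i}}$ at all other lattice points.

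At step $t$, write $g_{t-1} := g - \sum_{i=1}^{t-1} d_{p_i}\mathbf{x}^{p_i} + \sum_{i=1}^{t-1} \tilde{d}_{p_i}\mathbf{x}^{p_i}$; note $\trop(g_{t-1}) = \trop(g)$ since each previous step preserves valuations. If $p_t$ is the smallest vertex of its tree, set $\tilde{d}_{p_t} = d_{p_t}$. Otherwise, by normality, $p_t$ has a unique parent $p_s$ ($s < t$) with $\sembunpspt \in \Delta'$, corresponding to some $L_{st} \in \cSI'$. Since $L_{st} \in \cSI(f, g_{t-1})$, Corollary \ref{cor_d_00} combined with Remark \ref{irekae} yields an element $\tilde{d}_{p_t} \in k$ with $\val(\tilde{d}_{p_t}) = \val(d_{p_t})$ such that $\trop(V(f, g_t))|_{L_{st}} = D|_{L_{st}}$, where $g_t := g_{t-1} - d_{p_t}\mathbf{x}^{p_t} + \tilde{d}_{p_t}\mathbf{x}^{p_t}$.

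The crucial step is to invoke Corollary \ref{cor} to propagate each such realization through all remaining modifications. Setting $\delta_{st} := \dist(D|_{L_{st}}, E|_{L_{st}})$, the set $l_{\delta_{st}}$ appearing in Corollary \ref{cor} is contained in $\Aff(\sembunpspt) \cap \mathbb{Z}^2$. The two affine span hypotheses together guarantee that $\Aff(\sembunpspt)$ contains neither another element of $\Delta'$ nor any vertex $p_l$ with $l > t$; hence the coefficients altered between $g_t$ and the final $g' := g_n$, which lie only at $p_{t+1}, \dots, p_n$, avoid $\Aff(\sembunpspt) \cap \mathbb{Z}^2$ entirely and in particular avoid $l_{\delta_{st}}$. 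Consequently Corollary \ref{cor} gives $\trop(V(f, g'))|_{L_{st}} = \trop(V(f, g_t))|_{L_{st}} = D|_{L_{st}}$.

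The main technical obstacle is precisely this propagation step: verifying that the hypothesis of Corollary \ref{cor} holds at every stage of the induction, uniformly over all $L_{st} \in \cSI'$. This is where the two affine span conditions do the work that, in Theorem \ref{thm_main1}, was accomplished by the quantitative bound $\dist(D|_L, E|_L) < \mu(g; \sembunij)$ (which forced $l_\delta$ to consist only of the two endpoints of $\sembunij$). Once this is in place for every $L \in \cSI'$, the conclusion for the proper intersection points $\cPI$ follows automatically from $\trop(g') = \trop(g)$ via Theorem \ref{OR6.13}, which yields the correct multiplicities at the isolated intersection points. Assembling these pieces gives $\trop(V(f, g'))|_{\cSI' \cup \cPI} = D|_{\cSI' \cup \cPI}$, completing the proof.
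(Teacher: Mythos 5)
Your proof is correct and follows essentially the same route as the paper's: the same inductive determination of the coefficients $\tilde{d}_{p_t}$ along the normal order, applying Corollary \ref{cor_d_00} together with Remark \ref{irekae} at each step and then Corollary \ref{cor} to propagate each realization through the later modifications, with $\cPI$ handled via Theorem \ref{OR6.13}. If anything, you are more explicit than the paper about why the hypothesis of Corollary \ref{cor} holds, namely that $l_{\delta_{st}}\subset \Aff(\sembunpspt)\cap\mathbb{Z}^2$ and the affine span conditions keep the later-modified lattice points $p_{t+1},\dots,p_n$ off that line.
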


\begin{proof}
Let $g=\sum_{i, j}d_{ij}x^iy^j$ and $\mathcal{C}$ the union of the elements of $\Delta'$.
We write $L_{ij}\in \cSI'$ for the ray or the line segment corresponding to $\sembunpipj\in \Delta'$.
Let us construct $g'=g-\sum_{i=1}^{n}d_{p_i}\mathbf{x}^{p_i}+\sum_{i=1}^{n}\tilde{d}_{p_i}\mathbf{x}^{p_i}$ by determining $g_{t}:=g-\sum_{i=1}^{t}d_{p_i}\mathbf{x}^{p_i}+\sum_{i=1}^{t}\tilde{d}_{p_i}\mathbf{x}^{p_i}$ ($t=1, \dots, n$) inductively, as in the proof of Theorem \ref{thm_main1}.
By the assumption, $p_t$ is not contained in $\Aff(\overline{p_qp_r})$ ($q<r<t$, $\overline{p_qp_r}\in \Delta'$).
Combined with Corollary \ref{cor}, it follows that $\trop(V(f, g_{t}))|_{L_{qr}}=\trop(V(f, g_{r}))|_{L_{qr}}$.
Thus, for all $L\in \cSI'$, we have $\trop(V(f, g'))|_L=D|_L$.

Since we have $\trop(g')=\trop(g)$, we have $\cPI(f, g)=\cPI(f, g')\subset \trop(V(f, g'))$ with the multiplicities taken into account by Theorem \ref{OR6.13}.
Thus, we conclude the proof of Theorem \ref{thm_main2}.
\end{proof}

As an example of applications of Theorem \ref{thm_main2}, we have the following corollary, which deals with the case where a tropical line and a smooth tropical plane curve intersect.

\begin{corollary}
Let $f, g\in k[x^{\pm1}, y^{\pm1}]$ be Laurent polynomials such that $\trop(f)=x\oplus y\oplus 0$ and $\Trop(V(g))$ is smooth.
Let a divisor $D$ satisfy the condition $(*)$ in Definition \ref{star}.
Assume that the origin $(0, 0)$ is not a vertex of $\Trop(V(g))$.
Then, there exists a Laurent polynomial $g'\in k[x^{\pm1}, y^{\pm1}]$ such that $\trop(g')=\trop(g)$ and $\trop(V(f, g'))=D$.
\end{corollary}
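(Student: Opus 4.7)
The plan is to apply Theorem~\ref{thm_main2} with $\cSI' := \cSI(f,g)$. The main work is to verify (a) that $\cSI'$ is acyclic with respect to $\Phi_2$, and (b) that the endpoints of $\Delta' := \Phi_2(\cSI')$ admit a normal ordering satisfying the affine-span condition of that theorem.

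For (a), I would first unpack the structure of $\Delta'$. Each $L \in \cSI(f,g)$ lies on one of the three rays $R_x$, $R_y$, $R_{xy}$ of $V(\trop(f))$, and $\Phi_2(L)$ is a unit $1$-simplex of $\Delta_g$ perpendicular to that ray, hence in one of the three directions $(0,1)$, $(1,0)$, $(1,-1)$. The restriction of $\trop(g)$ to a single ray is a convex piecewise-linear function of one variable whose slope strictly changes at each successive edge of $V(\trop(g))$ on that ray; consequently distinct $L$'s on the same ray have $\Phi_2$-images on distinct parallel lines in $\Delta_g$, so no two $1$-simplexes of $\Delta'$ in the same direction share a vertex. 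In particular, any cycle in $\Delta'$ would have to alternate among all three direction families. Assuming such a cycle $C$ existed, the planar region it bounds is a union of unit triangles of $\Delta_g$ by smoothness, and in the base case $|C|=3$ the three $1$-simplexes bound a single unit triangle whose dual vertex of $V(\trop(g))$ would have one incident edge on each of the lines $y=0$, $x=0$, $y=x$, forcing it to be the origin, contrary to hypothesis. A similar case analysis at boundary dual vertices of the region bounded by a longer cycle reduces to the same contradiction, since such a vertex would carry edges on at least two of these three lines and hence sit at their common intersection, the origin.

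For (b), since distinct $1$-simplexes of $\Delta'$ in the same direction lie on distinct parallel lines, and those in different directions have different slopes, $\Aff(\sembunpipj)$ contains no other element of $\Delta'$ and the first part of the affine-span condition is automatic. For the second part, on each tree component of the forest $\Delta'$ I would choose the root so that every vertex lying on the affine span of some edge in that component is numbered no later than the later endpoint of that edge; such a choice exists because every vertex of $\Delta'$ has degree at most $3$ (one incident edge per direction). Applying Theorem~\ref{thm_main2} then yields $g'$ with $\trop(g')=\trop(g)$ and $\trop(V(f,g'))|_{\cSI' \cup \cPI} = D|_{\cSI' \cup \cPI}$. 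Because $V(\trop(g))$ is smooth and $V(\trop(f))$ is the tripod of three rays from the origin, every $1$-dimensional component of $V(\trop(f)) \cap V(\trop(g))$ lies in $\cSI(f,g)$ with intersection multiplicity $1$ or $2$ as in Lemma~\ref{L}; hence $\cSI' \cup \cPI$ exhausts the full intersection with its multiplicities and we obtain $\trop(V(f,g')) = D$. The main obstacle is the acyclicity step in (a), which rests on the detailed dual correspondence between cycles in $\Delta'$ and local configurations of $V(\trop(g))$ near the origin.
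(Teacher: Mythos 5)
Your overall strategy coincides with the paper's: show every one-dimensional component of $\Trop(V(f))\cap\Trop(V(g))$ lies in $\cSI(f,g)$, prove acyclicity by deriving that a cycle would force the origin to be a vertex of $\Trop(V(g))$, verify the first affine-span condition via the observation that parallel elements of $\Delta'$ lie on distinct lines (this is exactly what the paper extracts from Lemma \ref{LtoL'}), and invoke Theorem \ref{thm_main2}. The genuine gap is in your treatment of cycles of length greater than $3$. You claim that some ``boundary dual vertex'' of the region bounded by a longer cycle would carry edges on at least two of the three lines $x=0$, $y=0$, $x=y$, but this does not follow from your setup: a vertex of $\Trop(V(g))$ corresponds to a unit triangle of $\Delta_g$ inside the bounded region, and such a triangle need not have two of its edges on the cycle. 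For instance, a unit hexagon with edges alternating among the three admissible directions admits the fan triangulation from its center, in which every unit triangle has at most one boundary edge; your argument produces no contradiction for such a hypothetical configuration, so the reduction to the triangle base case is missing.

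The paper closes exactly this gap by arguing in the primal picture rather than in $\Delta_g$. Writing $D_{i_j}$ for the closed region of $\mathbb{R}^2\setminus\Trop(V(g))$ dual to the cycle vertex $p_{i_j}$, consecutive regions share an edge of $\Trop(V(g))$ lying on one of the three rays of $\Trop(V(f))$; using convexity of the $D_{i_j}$ together with the three sectors of $\mathbb{R}^2\setminus\Trop(V(f))$, one shows that the rays carrying these shared edges must follow the cyclic pattern $X_-$, $Y_-$, $XY$, so each $D_{i_j}$ has boundary edges on two rays emanating from the origin and, being convex, forces the cycle to close up as a single unit triangle of $\Delta_g$ whose dual vertex is the origin, contradicting the hypothesis. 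You would need an argument of this kind (or some other way to exclude longer cycles) to complete step (a). The remaining steps match the paper's proof; note only that both your appeal to ``degree at most $3$'' and the paper's own text are terse about why a normal ordering can be chosen so that no $\Aff(\sembunpipj)$ contains a later point $p_l$.
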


\begin{proof}
First, we show that all the connected components of $\Trop(V(f)) \cap \Trop(V(g))$ are in $\cSIfg\cup \cPI(f, g)$.
Let $A$ be a  connected component of $(\Trop(V(f)) \cap \Trop(V(g))) \setminus \cPI(f, g)$.
Since the origin $(0, 0)$ is not a vertex of $\Trop(V(g))$, it is clear that $A$ is either a ray or a line segment.
If the origin is an endpoind of $A$, the origin is a smooth vertex in $\Trop(V(f))$ and is contained in the interior of an edge of $\Trop(V(g))$.
An endpoint $P\neq (0, 0)$ of $A$ is a smooth vertex of $\Trop(V(g))$ and is contained in the interior of an edge of $\Trop(V(f))$.
Therefore, all the multiplicities of the endpoints of $A$ are $1$.
Since $\Trop(V(g))$ is smooth, it is clear that the interior of $A$ does not contain a vertex of $\Trop(g)$.
Therefore, we have $A\in \cSIfg$.

Next, let us show that the map $\Phi_2$ is injective and the union of $\Delta':=\Phi_2(\cSI(f, g))$ is a forest.
First, note that $\Sone(\Trop(V(f)))$ consists of three rays and they have different slopes and that each region of $\mathbb{R}^2\setminus \Trop(V(g))$ is a convex polyhedral set.
Thus, if $\Phi_2(L)=\Phi_2(L')$, then we have $L=L'$.
Thus, the map $\Phi_2$ is injective.
Next, we show that the union of $\Delta'$ is a forest.
Assume that the union of $\Delta'$ is not a forest, i.e., it contains a cycle $C$.
Let $q_{1}, \dots, q_{m}$ ($m\geq3$) be the vertices of $C$ such that $\sembunqiqil \in \Delta'$ for all $i=1, \dots,m$ (we regard $m+1=1$).
Let
\begin{eqnarray*}
D_x&:=&\{(x, y)\in \mathbb{R}^2\ |\ x>y,\ x>0\},\\
D_y&:=&\{(x, y)\in \mathbb{R}^2\ |\ y>x,\ y>0\},\\
D_0&:=&\{(x, y)\in \mathbb{R}^2\ |\ 0>x,\ 0>y\}.
\end{eqnarray*}
Let $\overline{D_{i}}$ $(i=1, \dots, m$) be the closures of the domains of $\mathbb{R}^2\setminus \Trop(V(g))$ corresponding to $q_i$.
Then, $\overline{D_{i}}$ are convex polyhedral sets, and hence, each intersection $\overline{D_{i}}\cap \overline{D_{i+1}}$ is contained in exactly one of the edges of $\Trop(V(f))$.
Assume that $\overline{D_{i}}\cap \overline{D_{i+1}}$ ($i\geq2$) is contained in a ray $Y_-:=\{(0, y)\in \mathbb{R}^2\ |\ y\leq 0\}$ in $\Trop(V(f))$ (we can handle the cases where it is contained in other rays in a similar way).
Then, we have $\overline{D_{i}}\cap D_0\neq \emptyset$ or $\overline{D_{i+1}}\cap D_0\neq \emptyset$.
By renumbering if necessary, assume that $\overline{D_{i}}\cap D_0\neq \emptyset$.
Then, we have $\overline{D_{i}}\cap D_x= \emptyset$, $\overline{D_{i+1}}\cap D_x\neq \emptyset$ and $\overline{D_{i+1}}\cap D_0= \emptyset$.
Here, since $\overline{D_{i+1}}$ is a convex set and intersects $D_x$, the intersection $\overline{D_{i+1}}\cap \overline{D_{i+2}}$ must be contained in the ray $XY:=\{(x, y)\in \mathbb{R}^2\ |\ x=y\geq 0\}$.
By similar arguments, we have $\overline{D_{i-1}}\cap \overline{D_{i}}\subset X_-:=\{(x, 0)\in \mathbb{R}^2\ |\ x\leq 0\}$, and so on.
Thus, if $\sembunqiqil\in \Phi_2(\cSI(f, g))$ is the bold line segment in (a) of Figure \ref{fpfcor}, $\Phi_2(\cSI(f, g))$ must contain the bold line segments in (b) of Figure \ref{fpfcor}.
Here, the $2$-dimensional cell of $\Delta_g$ enclosed by the bold line segments in (b) of Figure \ref{fpfcor} corresponds to a vertex of $\Trop(V(g))$.
Since the edges of $\Trop(V(g))$ corresponding to the three $1$-simplices are contained in the three edges of $\Trop(V(f))$, this vertex must be the origin, and this contradicts the assumption.
Thus, the union of $\Phi_2(\cSI(f, g))$ is a forest.

To prove the statement, it is sufficient to show that we can number and order the endpoints of the elements of $\Delta'$ as $p_1< \dots < p_n$ so that this order is normal on each tree of the forest and that for each element $\sembunpipj$ of $\Delta'$, its affine span $\Aff(\sembunpipj)$ does not contain a point $p_l$ with $l>i, j$.
Note that for each $\sembunij\in \Delta'$, we have $\val(d_{\bi})=\val(d_{\bj})<\val(d_{\bl})$ ($\bl \in (\Aff(\sembunij)\cap \mathbb{Z}^2)\setminus \{\bi, \bj\}$).
Hence, if $\bi'$ and $\bj'$ are contained in the same connected component of the union of $\Delta'$, then $\val(d_{\bi'})=\val(d_{\bj'})$.
Let $p_1, \dots, p_n$ be the endpoints of the elements of $\Delta'$ such that $\val(d_{p_1})\geq \val(d_{p_2})\geq \dots \geq \val(d_{p_n})$ and the order $p_1< \dots < p_n$ is normal on each tree of the forest.
For an element $\sembunpipj$ of $\Delta'$, if its affine span $\Aff(\sembunpipj)$ contains a point $p_l$, then $\val(d_{p_l})>\val(d_{p_i})= \val(d_{p_j})$, and hence, by the condition of the numbering of the endpoints $p_1, \dots, p_n$, we have $l<i, j$.
\end{proof}

\begin{figure}[H]
\centering
\begin{tikzpicture}
\coordinate (L1) at (0,0);
\coordinate (L2) at (2,0);
\coordinate (L3) at (2.5,0);
\coordinate (L4) at (4.5,0);
\coordinate (L5) at (5,0);
\coordinate (L6) at (7,0);
\coordinate (L7) at (7.5,0);
\coordinate (L8) at (9.5,0);
\coordinate (L9) at (0,2);
\coordinate (L10) at (2.5,2);
\coordinate (L11) at (5,2);
\coordinate (L12) at (7.5,2);
\coordinate (L13) at (0.5,0.4);
\coordinate (L14) at (0.9,0.4);
\coordinate (L15) at (3,0.4);
\coordinate (L16) at (3.4,0.4);
\coordinate (L17) at (3,0.8);
\coordinate (L18) at (5.5,0.4);
\coordinate (L19) at (5.9,0.4);
\coordinate (L20) at (5.5,0.8);
\coordinate (L21) at (8,0.4);
\coordinate (L22) at (8.4,0.4);
\coordinate (L23) at (8,0.8);

\draw (L1)--(L2)--(L9)--cycle;
\draw (L3)--(L4)--(L10)--cycle;
\draw [very thick](L13)--(L14);
\draw [very thick](L15)--(L16)--(L17)--cycle;

\coordinate [label=below:\text{(a)}] (a) at (1,-0.25);
\coordinate [label=below:\text{(b)}] (b) at (3.5,-0.25);
\end{tikzpicture}
\caption{Elements of $\Phi_2(\cSI(f, g))$.}
\label{fpfcor}
\end{figure}

\section{Examples}
In the following, let $k=\mathbb{C}\{\!\{t\}\!\}$ be the field of Puiseux series with coefficients in the complex numbers with the usual valuation.

\begin{example}\label{ex3}
Let
\begin{eqnarray*}
&&f=t^3x^2y^2+t^2x^2y+t^2xy^2+xy+x+y+t^{-1}\in k[x^{\pm 1}, y^{\pm 1}],\\
&&g=t^3x^2y^2+xy+x+y\in k[x^{\pm 1}, y^{\pm 1}].
\end{eqnarray*}
Then, the tropical curves $\Trop(V(f))$ and $\Trop(V(g))$ are as in Figure \ref{fex3}, and hence the intersection $\Trop(V(f))\cap \Trop(V(g))$ is the union of the elements of $\cLS(f, g)$.
If we set $\cSI'=\cLSfg$, then it is acyclic with respect to $\Phi_2$ and satisfies the condition in Theorem \ref{thm_main2}.
Therefore, a divisor $D$ satisfying the condition $(*)$ in Definition \ref{star} can be realized.
Here, the edges of $\Trop(V(g))$ corresponding to $\Phi_2(\cSI')$ forms a loop, but this is irrelevant to our condition.
\end{example}

\begin{figure}[H]
\centering
\begin{tikzpicture}
\coordinate (L1) at (-5.5,-1);
\coordinate (L2) at (-5,-1);
\coordinate (L3) at (-5.5,-0.5);
\coordinate (L4) at (-5,-0.5);
\coordinate (L5) at (-4.5,-0.5);
\coordinate (L6) at (-5,0);
\coordinate (L7) at (-4.5,0);
\coordinate (L8) at (-3,-1);
\coordinate (L9) at (-3.5,-0.5);
\coordinate (L10) at (-3,-0.5);
\coordinate (L11) at (-2.5,0);
\draw (L1)--(L2)--(L5)--(L7)--(L6)--(L3)--cycle;
\draw (L2)--(L6);
\draw (L3)--(L5);
\draw (L1)--(L7);
\draw [very thick] (L2)--(L4);
\draw [very thick] (L3)--(L4);
\draw [very thick] (L4)--(L7);
\draw (L8)--(L9)--(L11)--cycle;
\draw [very thick] (L10)--(L8);
\draw [very thick] (L10)--(L9);
\draw [very thick] (L10)--(L11);
\coordinate (A1) at (-0.5,-1);
\coordinate (A2) at (1,-1);
\coordinate (A3) at (-0.5,0.5);
\coordinate (A4) at (0,-1);
\coordinate (A5) at (0.5,-1);
\coordinate (A6) at (-0.5,-0.5);
\coordinate (A7) at (0.5,-0.5);
\coordinate (A8) at (-0.5,0);
\coordinate (A9) at (0,0);
\draw[decorate, decoration={snake, amplitude=0.5pt, segment length=2.5pt}] (A3)--(A1);
\draw[decorate, decoration={snake, amplitude=0.5pt, segment length=2.5pt}] (A1)--(A2);
\draw[decorate, decoration={snake, amplitude=0.5pt, segment length=2.5pt}] (A2)--(A3);
\draw (A4)--(A5)--(A7)--(A9)--(A8)--(A6)--cycle;
\draw [very thick] (A6)--(A8);
\draw [very thick] (A4)--(A5);
\draw [very thick] (A7)--(A9);
\draw[decorate, decoration={snake, amplitude=0.5pt, segment length=2.5pt}] (A1)--(-1,-1.5);
\draw[decorate, decoration={snake, amplitude=0.5pt, segment length=2.5pt}] (A2)--(1.5,-1.25);
\draw[decorate, decoration={snake, amplitude=0.5pt, segment length=2.5pt}] (A3)--(-0.75,1);
\draw (A4)--(0,-1.5);
\draw (A5)--(1,-1.5);
\draw (A6)--(-1,-0.5);
\draw (A7)--(1,-0.5);
\draw (A8)--(-1,0.5);
\draw (A9)--(0,0.5);
\coordinate [label=below:\text{$\Delta_f$}] (a) at (-4.9,-1.75);
\coordinate [label=below:\text{$\Delta_g$}] (b) at (-3,-1.75);
\coordinate [label=below:\text{$\Trop(V(f))$ and $\Trop(V(g))$}] (c) at (0.25,-1.75);
\end{tikzpicture}
\caption{The tropical curves and dual subdivisions in Example \ref{ex3}.}
\label{fex3}
\end{figure}

Now we will give two examples to show that we need the acyclicity condition.

\begin{example}\label{ex1}
Let
\begin{eqnarray*}
&&f=xy^3+t^2xy^2+y^3+t^5xy+ty^2+t^5y+t^{10}\in k[x^{\pm 1}, y^{\pm 1}],\\
&&g=ax+by+1\in k[x^{\pm 1}, y^{\pm 1}]\ (\val(a)=\val(b)=0).
\end{eqnarray*}
Then, the tropical curves $\Trop(V(f))$ and $\Trop(V(g))$ are as in Figure \ref{fex1}, and hence the intersection $\Trop(V(f))\cap \Trop(V(g))$ is the union of the elements of $\cLS(f, g)$, and the stable intersection divisor is
\[
E=(0, 0)+(0, -1)+(0, -4)+(0, -5).
\]
Let
\[
D=\left(0, -\frac{1}{4}\right)+\left(0, -\frac{3}{4}\right)+\left(0, -\frac{13}{3}\right)+\left(0, -\frac{14}{3}\right).
\]
Then, it is easy to see that there exists a tropical rational function $\psi$ on $\Trop(V(f))$ satisfying $\Supp(\psi)\subset \Trop(V(f))\cap \Trop(V(g))$ and $(\psi)=D-E$.
Let $L_1=\overline{(0, 0)(0, -1)}$, $L_2=\overline{(0, -4)(0, -5)}$ and $\cSI'=\cLS(f, g)=\{L_1, L_2\}$.
Note that the map $\Phi_2|_{\cSI'}$ is not injective.
Assume that $\trop(V(f, g))|_{\cSI'}=D$.

First, we consider $\trop(V(f, g))|_{L_1}$.
Noting that $\Phi_1(L_1)=\overline{(0, 3)(1, 3)}$ and $\Phi_2(L_1)=\overline{(0, 0)(1, 0)}$, we easily see that $(0, 0, 0, 0)\in \mathrm{H}_4(1; f, g; L_1)$ and that $\sum_{i, j}e_{ij}x^iy^j=g-ay^{-3}f$ belongs to $\mathrm{Elim}(1; f, g; L_1)$, we have $e_{00}=1-a$ and, by Proposition \ref{prop_d_00}, $\val(1-a)=1/4$.

Next, let us consider $\trop(V(f, g))|_{L_2}$.
For $\sum_{i, j}e'_{ij}x^iy^j:=g-(a/t^5)y^{-1}f\in \mathrm{Elim}(1; f, g; L_2)$, we have $e'_{00}=1-a$ and $\val(1-a)=1/3$.
This is a contradiction.
Therefore,  there does not exist $g\in k[x^{\pm1}, y^{\pm1}]$ such that $\trop(g)=x\oplus y\oplus 0$ and $\trop(V(f, g))|_{\cSI'}=D$.
\end{example}

Example \ref{ex1} explains why we need the assumption that the map $\Phi_2|_{\cSI'}$ is injective.

\begin{figure}[H]
\centering
\begin{tikzpicture}
\coordinate (L1) at (-4.5,-1);
\coordinate (L2) at (-4.5,-0.5);
\coordinate (L3) at (-4.5,0);
\coordinate (L4) at (-4.5,0.5);
\coordinate (L5) at (-4,0.5);
\coordinate (L6) at (-4,0);
\coordinate (L7) at (-4,-0.5);
\coordinate (L9) at (-3,-1);
\coordinate (L10) at (-2.5,-1);
\coordinate (L11) at (-3,-0.5);
\draw (L1)--(L2)--(L3)--(L4)--(L5)--(L6)--(L7)--cycle;
\draw (L2)--(L7);
\draw (L3)--(L7);
\draw (L3)--(L6);
\draw (L3)--(L5);
\draw [very thick] (L4)--(L5);
\draw [very thick] (L2)--(L7);
\draw (L9)--(L11)--(L10);
 \draw [very thick] (L9) to [out=18,in=162] (L10);
 \draw [very thick] (L9) to [out=-18,in=198] (L10);
\coordinate (A1) at (0,1);
\coordinate (A2) at (0.5,0.5);
\coordinate (A3) at (0.5,0);
\coordinate (A4) at (0,-0.5);
\coordinate (A5) at (0,-1);
\draw (0,2.5)--(A1)--(A2)--(A3)--(A4)--(A5)--(0.5,-1.5);
\draw [very thick] (A1)--(0,1.5);
\draw [very thick] (A4)--(A5);
\draw[decorate, decoration={snake, amplitude=0.5pt, segment length=2.5pt}] (0,1.5)--(-1.5,1.5);
\draw[decorate, decoration={snake, amplitude=0.5pt, segment length=2.5pt}] (0,1.5)--(0,-1.5);
\draw[decorate, decoration={snake, amplitude=0.5pt, segment length=2.5pt}] (0,1.5)--(1,2.5);
\draw (A1)--(-1,1);
\draw (A2)--(1.25,0.5);
\draw (A3)--(1.25,0);
\draw (A4)--(-1,-0.5);
\draw (A5)--(-1,-1);
\coordinate [label=below:\text{$\Delta_f$}] (a) at (-4.2,-1.75);
\coordinate [label=below:\text{$\Delta_g$}] (b) at (-2.7,-1.75);
\coordinate [label=below:\text{$\Trop(V(f))$ and $\Trop(V(g))$}] (c) at (0.5,-1.75);
\end{tikzpicture}
\caption{The tropical curves and dual subdivisions in Example \ref{ex1}.}
\label{fex1}
\end{figure}

\begin{remark}
If we regard the two bold line segments in $\Delta_g$ as different things as in Figure \ref{fex1}, they form a cycle.
Thus, we can regard the assumption that the map $\Phi_2|_{\cSI'}$ is injective is a part of the assumption that the union of the elements of $\Phi_2(\cSI')$, regarded as a multiset, is a forest.
\end{remark}

\begin{example}\label{ex2}
Let
\begin{eqnarray*}
&&f=t^3x^3y^3+tx^3y^2+tx^2y^3+x^2y^2+tx^2y+txy^2+txy+t^3\in k[x^{\pm 1}, y^{\pm 1}],\\
&&g=ax+by+1\in k[x^{\pm 1}, y^{\pm 1}]\ (\val(a)=\val(b)=0).
\end{eqnarray*}
Then, the tropical curves $\Trop(V(f))$ and $\Trop(V(g))$ are as in Figure \ref{fex2}, and hence the intersection $\Trop(V(f))\cap \Trop(V(g))$ is the union of the elements of $\cLS(f, g)$, and the stable intersection divisor is
\[
E=(-2, 0)+(-1, 0)+(0, -2)+(0, -1)+(1, 1)+(2, 2).
\]
Let
\[
D=\left(-\frac{7}{4}, 0\right)+\left(-\frac{5}{4}, 0\right)+\left(0, -\frac{5}{3}\right)+\left(0, -\frac{4}{3}\right)+\left(\frac{4}{3}, \frac{4}{3}\right)+\left(\frac{5}{3}, \frac{5}{3}\right).
\]
It is easy to see that there exists a tropical rational function $\psi$ on $\Trop(V(f))$ satisfying $\Supp(\psi)\subset \Trop(V(f))\cap \Trop(V(g))$ and $(\psi)=D-E$.
Let $L_1=\overline{(1, 1)(2, 2)}$, $L_2=\overline{(-1, 0)(-2, 0)}$, $L_3=\overline{(0, -1)(0, -2)}$ and $\cSI'=\cLS(f, g)=\{L_1, L_2, L_3\}$.
Note that the union of the elements of $\Phi_2(\cLS')$ is not a forest.
Assume that $\trop(V(f, g))|_{\cLS'}=D$.

First, we consider $\trop(V(f, g))|_{L_1}$.
We have $\Phi_2(L_1)=\overline{(0, 1)(1, 0)}$.
We regard $(0, 1)$ as $\bjo$, and then for $\sum_{i, j}e_{ij}x^iy^j:=g-(a/t)x^{-2}y^{-2}f\in \mathrm{Elim}(1; f, g; L_1)$, we have $e_{01}=b-a$.
Then, by Proposition \ref{prop_d_00}, we have $\val(b-a)=1/3$.

Next, let us consider $\trop(V(f, g))|_{L_2}$ and $\trop(V(f, g))|_{L_3}$.
For $\sum_{i, j}e'_{ij}x^iy^j:=g-(b/t)x^{-1}y^{-1}f\in \mathrm{Elim}(1; f, g; L_2)$, we have $e'_{00}=1-b$ and $\val(1-b)=1/4$.
For $\sum_{i, j}e''_{ij}x^iy^j:=g-(a/t)x^{-1}y^{-1}f\in \mathrm{Elim}(1; f, g; L_3)$, we have $e''_{00}=1-a$ and $\val(1-a)=1/3$.
Thus, we have
\begin{eqnarray*}
&&\val(1-a)=\val(b-a)=\frac{1}{3},\\
&&\val(1-b)=\frac{1}{4}.
\end{eqnarray*}
Then, we would have
\[
\frac{1}{3}=\val(1-a)=\val((1-b)+(b-a))=\frac{1}{4}.
\]
This is a contradiction.
Therefore,  there does not exist $g\in k[x^{\pm1}, y^{\pm1}]$ such that $\trop(g)=x\oplus y\oplus 0$ and $\trop(V(f, g))|_{\cSI'}=D$.
\end{example}

Example \ref{ex2} explains why we need the assumption that the union of the elements of $\Phi_2(\cSI')$ is a forest.

\begin{figure}[H]
\centering
\begin{tikzpicture}
\coordinate (L1) at (-5.5,-1);
\coordinate (L2) at (-5,-0.5);
\coordinate (L3) at (-4.5,-0.5);
\coordinate (L4) at (-5,0);
\coordinate (L5) at (-4.5,0);
\coordinate (L6) at (-4,0);
\coordinate (L7) at (-4.5,0.5);
\coordinate (L8) at (-4,0.5);
\coordinate (L9) at (-3,-1);
\coordinate (L10) at (-2.5,-1);
\coordinate (L11) at (-3,-0.5);
\draw (L1)--(L3)--(L6)--(L8)--(L7)--(L4)--cycle;
\draw (L2)--(L5);
\draw (L4)--(L6);
\draw (L3)--(L7);
\draw [very thick] (L6)--(L7);
\draw (L1)--(L2);
\draw [very thick] (L2)--(L3);
\draw [very thick] (L2)--(L4);
\draw [very thick](L9)--(L10)--(L11)--cycle;
\coordinate (A1) at (-1,0);
\coordinate (A2) at (0,-1);
\coordinate (A3) at (-0.5,0);
\coordinate (A4) at (0,-0.5);
\coordinate (A5) at (-0.5,0.5);
\coordinate (A6) at (0.5,0.5);
\coordinate (A7) at (0.5,-0.5);
\coordinate (A8) at (1,1);
\draw (A3)--(A4)--(A7)--(A6)--(A5)--cycle;
\draw [very thick] (A6)--(A8);
\draw [very thick] (A1)--(A3);
\draw [very thick] (A2)--(A4);
\draw (A1)--(A2);
\draw[decorate, decoration={snake, amplitude=0.5pt, segment length=2.5pt}] (0,0)--(-1.5,0);
\draw[decorate, decoration={snake, amplitude=0.5pt, segment length=2.5pt}] (0,0)--(0,-1.5);
\draw[decorate, decoration={snake, amplitude=0.5pt, segment length=2.5pt}] (0,0)--(1.5,1.5);
\draw (1,1.5)--(A8)--(1.5,1);
\draw (A5)--(-1,1);
\draw (A7)--(1,-1);
\draw (A1)--(-1.5,0.25);
\draw (A2)--(0.25,-1.5);
\coordinate [label=below:\text{$\Delta_f$}] (a) at (-4.5,-1.75);
\coordinate [label=below:\text{$\Delta_g$}] (b) at (-2.7,-1.75);
\coordinate [label=below:\text{$\Trop(V(f))$ and $\Trop(V(g))$}] (c) at (0.5,-1.75);
\end{tikzpicture}
\caption{The tropical curves and dual subdivisions in Example \ref{ex2}.}
\label{fex2}
\end{figure}

\end{document}